\documentclass[reqno,a4paper,11pt]{amsart}
\usepackage[T1]{fontenc}
\usepackage[utf8]{inputenc}
\usepackage[english]{babel}
\usepackage{url}
\usepackage[%backref=page,
colorlinks=true, linkcolor = black, urlcolor=black, citecolor=blue, anchorcolor=blue]{hyperref}
\usepackage{comment}
\usepackage{tcolorbox}

\usepackage{cancel}

\newcommand{\crosses}[1]{%
	\ifcase#1\relax
	\or
	\rslash\or
	\rslash\mskip-5.5mu\rslash\or
	\rslash\mskip-5.5mu\rslash\mskip-5.5mu\rslash%
	\fi
}
\newcommand{\rslash}{\raisebox{.15ex}{/}}
\usepackage{xcolor,color}
\usepackage{geometry}
\usepackage[all, cmtip]{xy}
\usepackage{enumitem}
\usepackage{booktabs}
\usepackage{slashed}
\usepackage{bbm}
\usepackage{cancel}

\usepackage{cleveref}
\usepackage{tikz}
\usepackage{tikz-cd}
\tikzcdset{arrow style=tikz, diagrams={>=stealth}}

\usepackage{xargs}  % Use more than one optional parameter in a new commands
%

%%
%\usepackage[colorinlistoftodos,prependcaption,textsize=tiny]{todonotes}
%\newcommandx{\unsure}[2][1=]{\todo[linecolor=red,backgroundcolor=red!25,bordercolor=red,#1]{#2}}
%\newcommandx{\change}[2][1=]{\todo[linecolor=blue,backgroundcolor=blue!25,bordercolor=blue,#1]{#2}}
%\newcommandx{\info}[2][1=]{\todo[linecolor=OliveGreen,backgroundcolor=OliveGreen!25,bordercolor=OliveGreen,#1]{#2}}
%\newcommandx{\improvement}[2][1=]{\todo[linecolor=Plum,backgroundcolor=Plum!25,bordercolor=Plum,#1]{#2}}
%\newcommandx{\thiswillnotshow}[2][1=]{\todo[disable,#1]{#2}}
%%

\usepackage{soul}

\usepackage{amsmath}
\usepackage{amssymb,graphicx}
\usepackage{amsthm}
\usepackage{latexsym}
\usepackage{amsfonts}
\usepackage{bbm}
\usepackage{mathrsfs}
\usepackage{cases}

\usepackage{etex}
%\usepackage{tikz}
%\usetikzlibrary{arrows,cd,decorations.pathreplacing,decorations.markings,shapes.geometric}

%\setlength{\headheight}{32pt} \setlength{\headsep}{29pt}
%\setlength{\footskip}{28pt} \setlength{\textwidth}{444pt}
%\setlength{\textheight}{636pt} \setlength{\marginparsep}{7pt}
%\setlength{\marginparpush}{7pt} \setlength{\oddsidemargin}{4.5pt}
%\setlength{\evensidemargin}{4.5pt} \setlength{\topmargin}{-15pt}
%\setlength{\footnotesep}{8.4pt} \sloppy
%\parindent=.75pc
%\textheight 20.5cm

\setlength{\textheight}{23.0cm}
\setlength{\textwidth}{16.5cm}
\setlength{\footskip}{1.0cm}
\calclayout

%\numberwithin{equation}{subsection}
\numberwithin{equation}{section}

%anything below Section (subsections, subsubsections won't appear)
\setcounter{tocdepth}{1}

\theoremstyle{plain}
% The text of this environment is typesetted in italics
%\newtheorem{lemma}{Lemma}[subsection]
\newtheorem{lemma}{Lemma}[section]
\newtheorem{proposition}[lemma]{Proposition}
\newtheorem{proposition/definition}[lemma]{Proposition/Definition}
\newtheorem{theorem}[lemma]{Theorem}
\newtheorem{corollary}[lemma]{Corollary}

\newtheorem*{theorem*}{Theorem}

\theoremstyle{definition}
% The text of this environment is typesetted in roman letters
\newtheorem{definition}[lemma]{Definition}
\newtheorem{remark}[lemma]{Remark}
\newtheorem{example}[lemma]{Example}

\DeclareRobustCommand{\SkipTocEntry}[5]{}

%useful commands

\newcommand{\bbnabla}{\nabla \hspace{-4.5pt} \nabla}

%Alphabet Shortcuts

%capital boldface

%small boldface

%capital mathcal

%capital mathscr

%capital mathbb

%small mathbb

%capital mathfrak

%small mathfrak

%capital roman

%small roman

%small greek boldface

%capital greek boldface

% Preferences
%\newcommand{\oldphi}{\phi}
%\renewcommand{\phi}{\varphi}

\renewcommand{\theta}{\vartheta}

\allowdisplaybreaks

\title{Multiplicative Connections and Their Lie Theory}

\makeatletter
\def\author@andify{%
	\nxandlist {\unskip ,\penalty-1 \space\ignorespaces}%
	{\unskip {} \@@and~}%
	{\unskip \penalty-2 \space \@@and~}%
}
\makeatother

\author[F.~Pugliese]{Fabrizio Pugliese}
\address{DipMat, Universit\`a degli Studi di Salerno, via Giovanni Paolo II n${}^{\circ}$123, 84084 Fisciano (SA), Italy.}
\email{\href{mailto:fpugliese@unisa.it}{fpugliese@unisa.it}}

\author[G.~Sparano]{Giovanni Sparano}
\address{DipMat, Universit\`a degli Studi di Salerno, via Giovanni Paolo II n${}^{\circ}$123, 84084 Fisciano (SA), Italy.}
\email{\href{mailto:sparano@unisa.it}{sparano@unisa.it}}

\author[L.~Vitagliano]{Luca Vitagliano}
\address{DipMat, Universit\`a degli Studi di Salerno, via Giovanni Paolo II n${}^{\circ}$123, 84084 Fisciano (SA), Italy.}
\email{\href{mailto:lvitagliano@unisa.it}{lvitagliano@unisa.it}}

%SE TI CHIEDONO DI RIDURRE PUOI SENZ'ALTRO ELIMINARE I RIFERIMENTI ALLE COMPONENTI PRIMARIE. PUOI ANCHE GIA' PROVARCI.

%\tableofcontents

\keywords{Lie groupoids, Lie algebroids, Linear connections, multiplicative structures, IM structures, $Q$-manifolds}

\subjclass[2010]{22A22 (Primary), %Lie groupoids
53B05, %linear and affine connections
53D17, %Poisson manifolds, groupoids and algebroids
58A50%graded manifolds
}

\begin{document}

\begin{abstract}
We define and study \emph{multiplicative connections} in the tangent bundle of a Lie groupoid. Multiplicative connections are linear connections satisfying an appropriate compatibility with the groupoid structure. Our definition is \emph{natural} in the sense that a linear connection on a Lie groupoid is multiplicative if and only if its torsion is a multiplicative tensor in the sense of Bursztyn-Drummond \cite{BD2019} and its geodesic spray is a multiplicative vector field. We identify the obstruction to the existence of a multiplicative connection. We also discuss the infinitesimal version of multiplicative connections in the tangent bundle, that we call \emph{infinitesimally multiplicative (IM) connections} and we prove an integration theorem for IM connections. Finally, we present a few toy examples. 
\end{abstract}

\maketitle

\tableofcontents

\section{Introduction}

Lie groupoids are important objects in Differential Geometry, particularly because they encode symmetries of geometric structures. Additionally, certain singular spaces, called \emph{differentiable stacks}, can be seen as Morita equivalence classes of Lie groupoids \cite{BX2011}. Examples of differentiable stacks are orbifolds, leaf spaces of foliations, and orbit spaces of group actions. The geometry of a differentiable stack is encoded in the so called \emph{transverse geometry} of a Lie groupoid representing it \cite{dH2013}, and geometric structures on a differentiable stack are often represented by cohomology classes of appropriate complexes attached to the Lie groupoid. For instance, a vector field on a differentiable stack $X$ represented by a Lie groupoid $\mathcal G$ is represented by a cocycle in the deformation complex of $\mathcal G$ \cite{H2009, OW2019, CMS2018}. Similarly, symplectic structures on $X$, the so called \emph{shifted symplectic structures}, are represented by cocycles in the Bott-Shulman-Stasheff complex of $\mathcal G$, satisfying an appropriate non-degeneracy condition up to homotopy \cite{G2014}.

From a somehow different point of view, Lie groupoids sometimes come with geometric structures compatible with the groupoid structure. Such structures are often called \emph{multiplicative} in the current literature. For instance a multiplicative vector field on a Lie groupoid is a vector field generating a flow by local groupoid automorphisms \cite{MX1998}, or complex Lie groupoids are (real) Lie groupoids equipped with a multiplicative complex structure \cite{LSX2009}. Similarly, symplectic groupoids \cite{K1987, W1987} are Lie groupoids equipped with a multiplicative symplectic structure and so on. In the last 30 years several different kinds of multiplicative structures on Lie groupoids have been defined and studied, including multiplicative Poisson structures \cite{W1987, LSX2007}, multiplicative contact structures \cite{KS1993, L1993, BSTV2019}, multiplicative foliations \cite{JO2014}, multiplicative tensors \cite{BD2019}, and many many others (see \cite{KS2016} for a survey). Multiplicative structures on Lie groupoids are interesting at least for two reasons.

The first reason is that they are usually the \emph{global counterparts} of interesting (infinitesimal) geometric structures on the space of objects. Namely, as Lie groups have their infinitesimal counterparts in Lie algebras, Lie groupoids have their infinitesimal counterparts in Lie algebroids. Multiplicative structures on a Lie groupoid $\mathcal G \rightrightarrows M$ can be linearized around the manifold of objects $M$ to produce structures on the associated Lie algebroid $A$. Such infinitesimal versions of multiplicative structures are often called \emph{infinitesimally multiplicative} (\emph{IM} for short, in the following). In their turn IM structures on $A$ encode structures on $M$. For instance, an IM symplectic structure encodes a Poisson structure on $M$, an IM contact structure is a Jacobi structure on $M$, an IM complex structure is a complex structure on $M$ plus a holomorphic Lie algebroid, etc. And one can efficiently study the infinitesimal structures studying their global counterparts.

The second reason why multiplicative structures are important is related to differentiable stacks. Consider a differentiable stack $X$ represented by a Lie groupoid $\mathcal G$. As we have already mentioned, geometric structures on $X$ are often represented by cocycles in appropriate cochain complexes attached to $\mathcal G$. It turns out that multiplicative structures are then cocycles of a specific kind. For instance, multiplicative (pre-)symplectic structures are cocycles of a special kind in the Bott-Shulman-Stasheff complex. 
In this sense, the theory of \emph{multiplicative structures} on Lie groupoids is a ``simplified'' version of the theory of geometric structures on differentiable stacks.

This paper belongs to a series devoted to explore differential calculus on differentiable stacks. One of our aims is defining the building blocks of such differential calculus, namely differential operators and jets on differentiable stacks. We began in \cite{ETV2019, LV2019, LV20??} identifying a cochain complex whose cohomologies can be interpreted as derivations of a vector bundle over a differentiable stack and we wish to continue defining and studying linear connections on differentiable stacks. As a warm up, in this paper, we define multiplicative connections in the tangent bundle of a Lie groupoid. We hope we will be able to provide a definition of linear connections over a differentiable stack at a later stage of this project.

The paper is organized as follows. In Section \ref{Sec:Lin_Conn} we recall the fundamentals about linear connections. In Section \ref{Sec:Mult_Conn} we define multiplicative connections and discuss their basic properties. Our definition is inspired by that of multiplicative vector valued forms by Bursztyn-Drummond \cite{BD2013} (more precisely the equivalent definition provided by \cite[Lemma 4.1]{BD2013}). Specifically, let $\mathcal G \rightrightarrows M$ be a Lie groupoid with source, target and multiplication denoted $s,t,m$, then a linear connection $\nabla$ in $T\mathcal G \to \mathcal G$ is \emph{multiplicative} if it is $(s,t)$-projectable (i.e. there is a connection $\nabla^M$ in $TM \to M$ such that $\nabla, \nabla^M$ are both $s$ and $t$-related), in which case the product connection $\nabla \times \nabla$ in $T (\mathcal G \times \mathcal G) \to \mathcal G \times \mathcal G$ restricts to the manifold $\mathcal G^{(2)}$ of composable arrows, and, additionally, the restriction $(\nabla \times \nabla)|_{\mathcal G^{(2)}}$ and $\nabla$ are $m$-related. This definition is not arbitrary. It is natural in the sense that a linear connection in $T\mathcal G$ is multiplicative iff its torsion is a multiplicative vector valued $2$-form and its geodesic spray is a multiplicative vector field on $T \mathcal G$ (see Proposition \ref{prop:char_mult_conn}). Not all Lie groupoids admit a multiplicative connection in their tangent bundle, and we also identify the obstruction to the existence, showing that a multiplicative connection exists iff a certain (characteristic) cohomology class in an appropriate cochain complex does vanish (Theorem \ref{theor:obst_class}). Our complex is a close relative of the deformation complex of Crainic-Mestre-Struchiner \cite{CMS2018}, and multiplicative $(2,1)$-tensors are exactly $1$-cocycles therein. Additionally, the obstruction class is an infinitesimal version of the Atiyah class of a dg-vector bundle \cite{MSX2015}. In Section \ref{Sec:IM_Conn} we present the \emph{infinitesimal theory}. Let $A \Rightarrow M$ be a Lie algebroid, a linear connection in $TA \to A$ is \emph{infinitesimally multiplicative} (IM for short) if it is compatible with the Lie algebroid structure in an appropriate sense.  
In the same spirit as \cite{BD2019}, we define IM connections in terms of appropriate components (Definition \ref{def:IM_conn}). Our definition is natural in the sense that a linear connection in $T A$ is IM iff its torsion is an IM vector valued $2$-form and its geodesic spray is an IM vector field on $T A$ wrt to the tangent prolongation Lie algebroid structure $TA \Rightarrow TM$ (see Theorem \ref{theor:IM_spray}). An even stronger motivation is provided by the following theorem, which is also our main result (see Theorem \ref{theor:Lie_theory} for a more detailed statement):
\begin{theorem}
 Let $\mathcal G \rightrightarrows M$ be a Lie groupoid, let $A \Rightarrow M$ be its Lie algebroid, and let $\nabla$ be a multiplicative connection in $T\mathcal G \to \mathcal G$. Then $\nabla$ determines an IM connection $\nabla^{\mathrm{lin}}$ in $TA \to A$. If $\mathcal G$ is source simply connected, the assignment $\nabla \mapsto \nabla^{\mathrm{lin}}$ establishes a bijection between multiplicative connections in $T \mathcal G \to \mathcal G$ and IM connections in $TA \to A$.
\end{theorem}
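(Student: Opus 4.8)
The plan is to prove this integration theorem following the standard strategy for Lie-theoretic correspondences between multiplicative and IM structures, adapted here to the setting of connections. I would break the argument into three parts: constructing the differentiation map $\nabla \mapsto \nabla^{\mathrm{lin}}$, constructing an inverse (integration) in the source simply connected case, and verifying that the two are mutually inverse.

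\textbf{Differentiation.} First I would make precise how a multiplicative connection $\nabla$ on $T\mathcal G$ linearizes. The manifold $A$ embeds in $T\mathcal G|_M$ (as $\ker ds|_M$), and more to the point $TA$ carries a canonical Lie algebroid structure $TA \Rightarrow TM$, the tangent prolongation of $A$. The connection $\nabla$, being multiplicative, restricts and linearizes along the units to produce a connection $\nabla^{\mathrm{lin}}$ in $TA \to A$; concretely I expect one linearizes the connection coefficients (Christoffel data) of $\nabla$ transverse to $M$, exactly as one linearizes a multiplicative tensor to get an IM tensor in the Bursztyn--Drummond picture. Since Proposition \ref{prop:char_mult_conn} characterizes multiplicativity via the torsion and the geodesic spray, I would use the already-established fact that the torsion $T_\nabla$ is a multiplicative vector-valued $2$-form and the spray is a multiplicative vector field, then differentiate each of these: the torsion linearizes to an IM vector-valued $2$-form and the spray linearizes to an IM vector field on $TA$ for the tangent prolongation structure. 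By Theorem \ref{theor:IM_spray}, these two facts together say exactly that $\nabla^{\mathrm{lin}}$ is IM. This reduces the well-definedness of the differentiation map to the corresponding (known) differentiation statements for multiplicative tensors and multiplicative vector fields, which I would cite rather than redo.

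\textbf{Integration.} Conversely, given an IM connection $\widehat\nabla$ in $TA \to A$ and assuming $\mathcal G$ is source simply connected, I would integrate. The cleanest route is again to decompose via Theorem \ref{theor:IM_spray}: the IM data of $\widehat\nabla$ amounts to an IM vector-valued $2$-form (its torsion) together with an IM spray. By the integration theorems for multiplicative tensors of Bursztyn--Drummond \cite{BD2019} and for multiplicative vector fields \cite{MX1998}, each of these integrates \emph{uniquely} to a multiplicative object on $\mathcal G$ precisely because $\mathcal G$ is source simply connected (this is where simple connectivity is indispensable: it is what lets one integrate an infinitesimally multiplicative cocycle to a genuinely multiplicative global one, by integrating along $s$-fibers and using that they are simply connected so the result is well defined and independent of paths). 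One then reconstructs $\nabla$ from its torsion and spray: a linear connection is determined by its geodesic spray together with its torsion (the spray fixes the symmetric part of the Christoffel symbols and the torsion supplies the antisymmetric part), so these two integrated multiplicative objects assemble into a unique linear connection $\nabla$ on $T\mathcal G$, which is multiplicative by Proposition \ref{prop:char_mult_conn}.

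\textbf{Bijectivity and the main obstacle.} Finally I would check that differentiation and integration are inverse to each other. One direction, $(\nabla^{\mathrm{lin}})$ integrating back to $\nabla$, follows from the \emph{uniqueness} in the integration theorems: both $\nabla$ and the integrated connection have the same linearization, hence the same IM torsion and spray, hence the same multiplicative torsion and spray by uniqueness, hence are equal. The other direction is the analogous uniqueness at the infinitesimal level. I expect the main obstacle to be not the two Lie-theoretic integration inputs, which can be cited, but rather the \emph{bookkeeping of the reconstruction} $\nabla \leftrightarrow (T_\nabla, \text{spray})$ compatibly with the groupoid and algebroid structures: one must ensure that the correspondence ``connection $\leftrightarrow$ (spray, torsion)'' is natural enough that multiplicativity/IM-ness is preserved through the splitting and reassembly, and that no compatibility (e.g.\ the $(s,t)$-projectability built into the definition of multiplicative connection, and its infinitesimal shadow) is lost in the process. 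Making this dictionary precise, so that integrating the pieces and reassembling genuinely lands in a multiplicative connection rather than merely a projectable one, is the technical heart of the argument.
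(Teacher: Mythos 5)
Your overall strategy coincides with the paper's: differentiate by splitting $\nabla$ into its torsion and geodesic spray, invoke Proposition \ref{prop:char_mult_conn} and Theorem \ref{theor:IM_spray} to reduce IM-ness of the linearization to the known differentiation statements for multiplicative tensors and multiplicative vector fields, and in the source simply connected case integrate the IM torsion via \cite[Theorem 3.19]{BD2019} and the IM spray via the integration of IM vector fields, then reassemble. Bijectivity from uniqueness of integration is also how the paper argues.

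There is, however, one genuine gap in your integration step, and you have slightly mislocated where the difficulty sits. You write that the integrated spray and torsion "assemble into a unique linear connection $\nabla$ on $T\mathcal G$", and you worry about whether multiplicativity survives the reassembly. But the prior problem is that the object $Z$ you obtain by integrating the IM vector field $Z^{\nabla^{IM}}$ is, a priori, only a multiplicative vector field on $T\mathcal G$ — nothing in the integration theorem tells you it is a \emph{spray}. A multiplicative vector field on $T\mathcal G$ that is not homogeneous of degree $1$ and does not satisfy $V_{\mathcal G}(Z) = \mathsf{Eul}_{\mathcal G}$ corresponds to no connection at all, so the sentence "the spray fixes the symmetric part of the Christoffel symbols" cannot even be parsed until this is established. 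The paper devotes the entire second half of its proof to exactly this point: it shows that the fiberwise scalar multiplication $h^{\mathcal G}$ is an action by groupoid maps differentiating to $h^A$, that the vertical endomorphism $V_{\mathcal G}$ is a groupoid map differentiating to $V_A$, and that $\mathsf{Eul}_{\mathcal G}$ is a multiplicative vector field differentiating to $\mathsf{Eul}_A$; then, since $(h^A_t)^\ast Z^{IM} = tZ^{IM}$ and $V_A(Z^{IM}) = \mathsf{Eul}_A$ hold because $Z^{IM}$ \emph{is} a spray, the uniqueness of integration of IM vector fields forces $(h^{\mathcal G}_t)^\ast Z = tZ$ and $V_{\mathcal G}(Z) = \mathsf{Eul}_{\mathcal G}$. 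Once this is supplied your reassembly goes through (the paper takes $\nabla + T$ where $\nabla$ is the symmetric multiplicative connection with spray $Z$ and $T$ is the integrated torsion), and multiplicativity of the result is immediate from Proposition \ref{prop:char_mult_conn}(5) rather than being a separate worry. So your plan is the right one, but the step you need to add is: \emph{prove that the integrated multiplicative vector field is a spray}, by integrating the two defining identities of a spray using the uniqueness part of the Lie theory of multiplicative vector fields.
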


In Section \ref{Sec:Examples} we discuss a few examples. It should be noticed, in this respect, that multiplicative (and IM) connections are not abundant objects, and the existence of a multiplicative connection on the tangent bundle of a Lie groupoid $\mathcal G$ imposes severe conditions on $\mathcal G$. For instance $\mathcal G$ is necessarily regular and the isotropy Lie algebra bundle of $\mathcal G$ is necessarily abelian. As a consequence, it is not easy to find interesting examples. This might disappoint some readers. Recall however that multiplicative structures do often impose constraints on the Lie groupoid. For instance a Lie algebroid with an IM symplectic structure is necessarily the cotangent Lie algebroid of a Poisson manifold. Additionally, we stress again that our ultimate goal is actually defining and studying connections on differentiable stacks, which we think should be more abundant. In any case, we show in Section \ref{Sec:Examples} that orbifold groupoids might possess a multiplicative connection in which case the latter induce a connection in the associated orbifold, confirming the idea that there might be a relation between multiplicative connections and the should-be notion of connection on a differentiable stack. Somehow similarly, submersion groupoids possess multiplicative connections and they induce connections on their orbit space (i.e. the base manifold of the submersion). We also show that every transitive Lie algebroid with abelian isotropy possesses an IM connection in its tangent bundle. Finally, adopting appropriate simplifying ansatzes we construct a family of examples of IM connections in the tangent bundle of a Lie algebroid whose anchor in neither injective nor surjective. 

The paper is completed by two appendix. Appendix \ref{app:B} contains a proof of Theorem \ref{theor:obst_class} on the \emph{Atiyah class} obstructing the existence of multiplicative connections. Appendix \ref{app:A} contains both technical and conceptual material. Namely we provide a proof of the characterization of IM connections in terms of their torsion and geodesic spray using graded geometry techniques. To do this we first show that a Lie algebroid with an IM connection is equivalent to a degree $1$ non-negatively graded $Q$-manifold with a connection compatible with the $Q$-structure. This is not surprising and it is a manifestation of a general phenomenon: Lie algebroids are equivalent to degree $1$ non-negatively graded $Q$-manifolds, and a Lie algebroid with an IM structure of a certain kind is often equivalent to a (degree $1$ non-negatively graded) $Q$-manifold with a structure of the same kind which is additionally compatible with the homological vector field $Q$ in an appropriate sense. For instance, Lie algebroids with an IM symplectic (resp., contact) structure are the same as degree $1$ symplectic (resp., contact) $\mathbb N Q$-manifolds (see, e.g. \cite{R2002,G2013,M2013}, see also \cite{V2016}). We believe that the definition of an IM connection (and other IM structures) in terms of $Q$-manifolds should be considered as the most fundamental one. 

We conclude this introduction remarking that many definitions and results of the present paper do actually pass to the case of linear connections in arbitrary VB groupoids, i.e. vector bundle objects in the category of Lie groupoids \cite{mackenzie, BCH2016, GSM2017}. Our emphasis however is usually on the torsion and the geodesic spray, and those notions do not exist for a linear connection in an arbitrary vector bundle. So a more general theory will require a different strategy in some of the proofs.

\bigskip

\textbf{Notation.} We denote by $\mathcal G \rightrightarrows M$ a Lie groupoid with $\mathcal G$ its space of arrows, and $M$ its space of objects. We denote by $A \Rightarrow M$ a Lie algebroid with $A \to M$ its underlying vector bundle. We usually denote simply by $[-,-]_A$ the Lie bracket on sections of $A$ and by $\rho_A : A \to TM$ the anchor. Given a surjective submersion $\pi : M \to B$ we denote by $T^\pi M := \ker d\pi$ the vertical tangent bundle wrt $\pi$, and by $\mathfrak X^\pi (M)$ the $\pi$-vertical vector fields on $M$, i.e. the sections of $T^\pi M$. If $E \to M$ is a vector bundle, we denote by $DE \to M$ the \emph{gauge algebroid} of $E$, i.e. the vector bundle whose sections are derivations of the $C^\infty (M)$-module $\Gamma (E)$. So a linear connection $\nabla$ in $E$ is a vector bundle map $TM \to DE$ splitting the \emph{symbol map}, i.e. the canonical projection $\sigma : DE \to TM$.

We assume the reader is familiar with Lie groupoids and Lie algebroids, including the tangent prolongation $ T\mathcal G  \rightrightarrows TM$ (resp., $TA \Rightarrow TM$) of a Lie groupoid $\mathcal G \rightrightarrows M$ (resp., a Lie algebroid $A \Rightarrow M$). Our main reference for this material is the book by K. Mackenzie \cite{mackenzie}. Additionally, the appendix requires some familiarity with graded geometry, for which the reader might consult \cite{M2006}.

\section{Linear Connections and Smooth Maps}\label{Sec:Lin_Conn}

In this short section we fix our notation and conventions about linear connections. We also recollect here some facts about linear connections and smooth maps that will be used throughout the paper and for which it is difficult to provide references. 

Let $\pi : E \to M$ be a vector bundle and let $\nabla$ be a linear connection (now on referred to simply as a connection) in $E$. 
We will denote by $H^\nabla \subseteq TE$ the associated horizontal distribution, and by $\theta^\nabla : TE \to T^\pi E$ the \emph{vertical projection} (determined by $\nabla$), i.e. the projection with kernel $H^\nabla$. We denote with the same symbol $\nabla$ the connection induced on the tensor algebra of $E$ (and its dual).

In this paper we will mainly consider connections in the tangent bundle $TM$ (although connections in different vector bundles will also pop up). Any such connection $\nabla$ is completely determined by its torsion $T^\nabla$ and its \emph{symmetric part} $\nabla^{\mathrm{sym}} = \nabla - \frac{1}{2} T^\nabla$. 
In its turn, a symmetric connection $\nabla$ is completely determined by its \emph{geodesic spray} $Z^\nabla \in \mathfrak X (TM)$, and we will often make use of this.

We will often need to compare two connections on two manifolds $M, N$ via a smooth map $F: M \to N$. Let us recall how to do that. We begin recalling how to compare tensors. We will only need $(p,1)$-tensors. So let $M$ and $N$ be smooth manifolds, and let $F : M \to N$ be a smooth map. We can use $F$ to compare given $(p,1)$-tensors $\mathcal T^M \in \Gamma ((T^\ast M)^{\otimes p} \otimes TM)$, $\mathcal T^N \in \Gamma ((T^\ast N)^{\otimes p} \otimes TN)$. Namely, we say that $\mathcal T^M$ and $\mathcal T^N$ are \emph{$F$-related} if, for any $x \in M$, and any $v_1, \ldots, v_p \in T_x M$ we have
\begin{equation}\label{eq:T,S,F-rel}
d_x F\left(\mathcal T^M (v_1, \ldots, v_p)\right) = \mathcal T^N (d_x F (v_1), \ldots, d_x F (v_p)),
\end{equation}
where we interpreted (as we will sometimes do) a $(p,1)$-tensor $\mathcal T^M$ on $M$ as a vector bundle map $\mathcal T^M : (TM)^{\otimes p} \to TM$. 
Let $\mathcal T^M, \mathcal T^N$ be tensors as above. There are various equivalent ways to express their $F$-relatedness (according to how we interpret tensors). For instance, if we interpret $\mathcal T^M$ as a map $\mathcal T^M : T^\ast M \to (T^\ast M)^{\otimes p}$ (likewise for $\mathcal T^N$), then $\mathcal T^M, \mathcal T^N$ are $F$-related iff $\mathcal T^M ( F^\ast \theta) = F^\ast ( \mathcal T^N \theta)$ for all $\theta \in \Omega^1 (N)$.

Similarly, we can use $F$ to compare given connections $\nabla^M, \nabla^N$ in $TM$ and $TN$ respectively. Namely, we say that $\nabla^M$ and $\nabla^N$ are \emph{$F$-related} if whenever $X_1, X_2 \in \mathfrak X (M)$ and $Y_1, Y_2 \in \mathfrak X (N)$ are vector fields such that $X_i$ and $Y_i$ are $F$-related, $i = 1, 2$, then the vector fields $\nabla^M_{X_1} X_2$ and $\nabla^N _{Y_1}Y_2$ are also $F$-related. There are several equivalent ways to express the $F$-relatedness of two connections, according to the following

\begin{lemma}\label{lem:conn,F-rel}
Let $\nabla^M$ and $\nabla^N$ be connections in $TM$ and $TN$ respectively, and let $F : M \to N$ be a smooth map. The following conditions are equivalent:
\begin{enumerate}
\item $\nabla^M$ and $\nabla^N$ are $F$-related;
\item $\nabla^M (F^\ast \theta) = F^\ast (\nabla^N \theta)$, for all $\theta \in \Omega^1 (M)$;
\item both the symmetric parts $(\nabla^M)^{\mathrm{sym}}, (\nabla^N)^{\mathrm{sym}}$ and the torsions $T^{\nabla^M}, T^{\nabla^N}$ of $\nabla^M, \nabla^N$ are $F$-related;
\item the tangent map $dF : TM \to TN$ maps the horizontal distribution $H^{\nabla^M}$ to $H^{\nabla^N}$;
\item the vertical projections $\theta^{\nabla^M}, \theta^{\nabla^N}$ are $dF$-related;
\item the torsions $T^{\nabla^M}, T^{\nabla^N}$ are $F$-related and the geodesic sprays $Z^{\nabla^M}, Z^{\nabla^N}$ are $dF$-related.
\end{enumerate}
\end{lemma}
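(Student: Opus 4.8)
The plan is to prove the equivalence of the six conditions by establishing a cycle of implications, exploiting the fact (stated in the preamble) that a connection in $TM$ is completely determined by its torsion $T^\nabla$ and its symmetric part $\nabla^{\mathrm{sym}}$, and that a symmetric connection is completely determined by its geodesic spray $Z^\nabla$. The cleanest route is to first prove $(1) \Leftrightarrow (2)$ and $(2) \Leftrightarrow (3)$ by direct unwinding of definitions, then handle the geometric characterizations $(4)$ and $(5)$ together, and finally reduce $(6)$ to $(3)$ via the geodesic-spray description of the symmetric part.

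First I would prove $(1) \Leftrightarrow (2)$. For $(1) \Rightarrow (2)$, I would use that $F$-relatedness of connections is a pointwise condition, and that $F$-related vector fields locally exist in enough abundance: writing $\nabla^M(F^\ast\theta)$ and $F^\ast(\nabla^N\theta)$ as $(0,2)$-tensors and pairing against a pair of vector fields $X_1, X_2 \in \mathfrak X(M)$, I would note that both sides only depend on the pointwise values of the arguments, so it suffices to test against $X_i$ that are $F$-related to some $Y_i \in \mathfrak X(N)$; then the defining identity $\langle \nabla^M_{X_1}(F^\ast\theta), X_2\rangle = X_1\langle F^\ast\theta, X_2\rangle - \langle F^\ast\theta, \nabla^M_{X_1}X_2\rangle$ and its $N$-counterpart match up because $X_1\langle F^\ast\theta, X_2\rangle = F^\ast\bigl(Y_1\langle\theta, Y_2\rangle\bigr)$ and $\nabla^M_{X_1}X_2$ is $F$-related to $\nabla^N_{Y_1}Y_2$ by hypothesis. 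The converse $(2) \Rightarrow (1)$ runs backwards along the same Leibniz identity. The subtle point here — and the first place where care is needed — is the existence of $F$-related vector fields through a given point realizing arbitrary tangent vectors; for a general smooth (not necessarily immersive or submersive) $F$ this requires a local argument, and I would phrase the whole thing so that only pointwise values are ever used, reducing everything to: given $v \in T_xM$ there is $X \in \mathfrak X(M)$ and $Y \in \mathfrak X(N)$ with $X$ and $Y$ $F$-related and $X_x = v$ (take $Y$ extending $d_xF(v)$ and $X$ extending $v$ with $X \sim_F Y$ near $x$, which is a standard partition-of-unity construction).

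Next, for $(2) \Leftrightarrow (3)$ I would split $\nabla = \nabla^{\mathrm{sym}} + \tfrac12 T^\nabla$ on both $M$ and $N$; since $F$-relatedness of tensors expressed in the dual-map form ($\mathcal T^M(F^\ast\theta) = F^\ast(\mathcal T^N\theta)$, as recalled just before the lemma) is linear in the tensor, and since condition $(2)$ is exactly the dual-map form of $F$-relatedness for the full connection operator, decomposing into symmetric and skew parts gives $(3)$ immediately, and conversely recombining gives $(2)$. For the geometric conditions, I would prove $(2) \Leftrightarrow (4) \Leftrightarrow (5)$: the vertical projection $\theta^\nabla$ and the horizontal distribution $H^\nabla = \ker\theta^\nabla$ are interchangeable data, so $(4) \Leftrightarrow (5)$ is formal once one checks $dF(\ker\theta^{\nabla^M}) \subseteq \ker\theta^{\nabla^N}$ is equivalent to $dF$-relatedness of the projections (using that $dF$ also maps vertical to vertical, since $F\circ\pi_M = \pi_N\circ dF$). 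To connect to $(2)$, I would work in the local frame description: the connection coefficients $\Gamma^k_{ij}$ determine $H^\nabla$ and $\theta^\nabla$, and the statement that $dF$ carries $H^{\nabla^M}$ into $H^{\nabla^N}$ is, upon writing out the Christoffel symbols, the coordinate form of $(2)$.

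Finally, $(3) \Leftrightarrow (6)$ is where the geodesic spray enters and is the main obstacle, though a conceptual one rather than a computational one. Both $(3)$ and $(6)$ assert $F$-relatedness of the torsions, so the content is the equivalence, given that the torsions are $F$-related, of ``$(\nabla^M)^{\mathrm{sym}}$ and $(\nabla^N)^{\mathrm{sym}}$ are $F$-related'' with ``$Z^{\nabla^M}$ and $Z^{\nabla^N}$ are $dF$-related''. The hard part will be relating $F$-relatedness of a symmetric connection to $dF$-relatedness of its spray, because the spray lives on the total space $TM$ rather than on $M$, so I must compare a condition about an operator on $M$ with a condition about a vector field on $TM$ via the tangent map $dF : TM \to TN$. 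My plan is to use that the geodesic spray is the unique horizontal second-order vector field whose underlying path structure is the geodesic flow: in a chart, $Z^\nabla$ has the form $v^i \partial_{x^i} - \Gamma^k_{ij}v^iv^j \partial_{v^k}$, and $dF$-relatedness of the two sprays, when written out, is exactly the symmetrized statement $\Gamma^M{}^k_{ij}v^iv^j \mapsto \Gamma^N{}^c_{ab}w^aw^b$ under $w = dF(v)$, which is the quadratic-form content of $F$-relatedness of the symmetric connections (the symmetric part of a connection is determined by, and determines, the map $v \mapsto \nabla^{\mathrm{sym}}_v v$, i.e. the geodesic equation). I would give this argument invariantly by noting that $Z^\nabla$ is characterized by $\theta^{\nabla}(Z^\nabla) = 0$ together with the second-order condition, so $dF$-relatedness of sprays follows from $(5)$ (equivalently $(2)$) for the symmetric part, closing the cycle. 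I would then remark that the only genuinely new input beyond the torsion/symmetric-part decomposition is this spray-vs-symmetric-connection dictionary, which I expect to record as a short separate observation to keep the logic of the cycle transparent.
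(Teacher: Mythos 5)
The paper states Lemma \ref{lem:conn,F-rel} without proof (it is presented in Section \ref{Sec:Lin_Conn} as a recollection of standard facts for which references are hard to give), so there is no argument of the authors' to compare yours against; I can only assess your outline on its own terms. The architecture --- cycle through $(1)\Leftrightarrow(2)\Leftrightarrow(3)$, handle $(4)\Leftrightarrow(5)$ and tie them to $(2)$ in local frames, and reduce $(6)$ to $(3)$ via the dictionary between a symmetric connection and its spray --- is sound, and your treatment of the one genuinely non-formal step, namely that $dF$-relatedness of the geodesic sprays is exactly the symmetrized (quadratic-form) content of $F$-relatedness of the connections while the torsion carries the alternating part, is correct: in coordinates the two conditions differ precisely by symmetrization in the lower indices, which is accounted for by the torsions.

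There is, however, one concrete step that fails as you state it. For the implication $(1)\Rightarrow(2)$ you need, for every $v\in T_xM$, an $F$-related pair $(X,Y)$ with $X_x=v$, and you assert this exists for an arbitrary smooth $F$ by ``a standard partition-of-unity construction.'' It does not: take $F:\mathbb R\to\mathbb R$, $F(t)=t^2$, and $v=\partial_t|_0$. If $X=f\,\partial_t$ is $F$-related to some $Y=g\,\partial_s$, then $g(t^2)=2tf(t)$ for all $t$; the left-hand side is even in $t$, which forces $f$ to be odd, hence $f(0)=0$. So no $F$-related pair realizes $v$, and condition $(1)$ cannot ``see'' the point $t=0$, whereas condition $(2)$ is a tensorial identity there. (Indeed, for this $F$ condition $(2)$ fails at $0$ for \emph{every} pair of connections: $\nabla^M(F^\ast ds)=2\,dt\otimes dt+2t\,\nabla^M dt$ does not vanish at $t=0$, while $F^\ast(\nabla^N ds)$ does.) The gap is harmless for the paper's purposes, because every map to which Lemma \ref{lem:conn,F-rel} is applied ($s$, $t$, $m$, $\bar m$, $u$, $i$, the projections, and the various fiber-product inclusions) is a submersion, an embedding, or a diffeomorphism, and for those your construction of $F$-related extensions with prescribed value is valid (lift through a local splitting of $dF$, or extend off a submanifold). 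But the Lemma is asserted for arbitrary smooth $F$, so you should either restrict the class of maps, or replace this step by an argument for $(1)\Rightarrow(2)$ that does not presuppose an abundance of $F$-related vector fields.
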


We conclude this introductory section discussing the product of two manifolds equipped with linear connections in their tangent bundles. Given two manifolds $M, N$ we will usually denote by $\mathrm{pr}_M : M \times N \to M$ and $\mathrm{pr}_N : M \times N \to N$ the projections of the product $M \times N$ onto its factors. It is clear that there exists a unique connection $\nabla^M \times \nabla^N$ in the tangent bundle $T(M \times N)$ of the product $M \times N$, such that $\nabla^M \times \nabla^N, \nabla^M$ are $\mathrm{pr}_M$-related, and $\nabla^M \times \nabla^N, \nabla^N$ are $\mathrm{pr}_N$-related.
We will often need the following straightforward lemma that we state for future reference.

\begin{lemma}\label{lem:fibered}
Let $\sigma : M \to B$, and $\tau : N \to B$ be surjective submersions, and let $\nabla^M, \nabla^N$ be connections in $TM, TN$ respectively. Suppose that there exists a connection $\nabla^B$ in $TB$ such that $\nabla^M, \nabla^B$ are $\sigma$-related and $\nabla^N, \nabla^B$ are $\tau$-related. Then the product connection $\nabla^M \times \nabla^N$in $T(M \times N)$ restricts to the fiber product $M \times_B N \subseteq M \times N$, i.e. there exists a (necessarily unique) connection $\nabla^{\mathrm{fib}}$ in $T(M \times_B N)$ such that $\nabla^{\mathrm{fib}}, \nabla^M \times \nabla^N$ are $j$-related, where $j : M \times_B N \hookrightarrow M \times N $ is the inclusion.
\end{lemma}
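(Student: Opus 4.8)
The plan is to show that the fiber product $M \times_B N$ is \emph{totally geodesic} with respect to $\nabla^M \times \nabla^N$. Indeed, a connection on the tangent bundle of a manifold $P$ restricts to an embedded submanifold $S \subseteq P$ (in the sense of the lemma, i.e.\ there is a connection on $TS$ that is $j$-related to it, $j : S \hookrightarrow P$ the inclusion) precisely when $\nabla_{\tilde X}\tilde Y$ is tangent to $S$ along $S$ for all $X, Y \in \Gamma(TS)$ and all extensions $\tilde X, \tilde Y \in \mathfrak{X}(P)$ that are tangent to $S$ along $S$; the restricted connection is then automatically unique, being forced by $j$-relatedness to be $\nabla^{\mathrm{fib}}_X Y = (\nabla_{\tilde X}\tilde Y)|_S$. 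So first I would record this criterion, together with the standard fact that the ``normal part'' $(X,Y) \mapsto \nabla_{\tilde X}\tilde Y \bmod TS$ is a well-defined tensor on $S$ (it is independent of the extensions and depends only on the pointwise values $X_p, Y_p$). Consequently it suffices to verify the tangency of $\nabla_{\tilde X}\tilde Y$ on any family of vector fields spanning $TS$ pointwise.

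Preliminarily I would note that $M \times_B N = \psi^{-1}(\Delta)$, where $\psi := \sigma \times \tau : M \times N \to B \times B$ and $\Delta \subseteq B \times B$ is the diagonal. Since $\sigma, \tau$ are submersions, $\psi$ is transverse to $\Delta$, so $M \times_B N$ is an embedded submanifold with tangent space $T_{(x,y)}(M \times_B N) = \{(u,v) \in T_x M \oplus T_y N : d_x\sigma(u) = d_y\tau(v)\}$.

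The heart of the argument is the choice of a convenient spanning set. Given $w \in T_b B$ with $b = \sigma(x) = \tau(y)$, I would pick $W \in \mathfrak{X}(B)$ and, using that $\sigma, \tau$ are submersions, lift it to $U \in \mathfrak{X}(M)$ and $V \in \mathfrak{X}(N)$ that are $\sigma$- resp.\ $\tau$-related to $W$, with $U(x), V(y)$ projecting to $w$. The product field $U \oplus V$, $(x',y') \mapsto (U(x'), V(y'))$, is tangent to $M \times_B N$ along it, since on the fiber product $d\sigma(U) = W \circ \sigma = W \circ \tau = d\tau(V)$; and as $W, U, V$ vary such fields span $T_{(x,y)}(M \times_B N)$ pointwise (the $\sigma$- and $\tau$-vertical directions arise from $W = 0$). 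For two such fields the defining property of the product connection gives $(\nabla^M \times \nabla^N)_{U \oplus V}(U' \oplus V') = (\nabla^M_U U') \oplus (\nabla^N_V V')$. Now, because $\nabla^M, \nabla^B$ are $\sigma$-related and $\nabla^N, \nabla^B$ are $\tau$-related, both $\nabla^M_U U'$ and $\nabla^N_V V'$ are related, through $\sigma$ and $\tau$ respectively, to the \emph{same} field $\nabla^B_W W'$ on $B$. Hence along $M \times_B N$ one gets $d\sigma(\nabla^M_U U') = \nabla^B_W W' \circ \sigma = \nabla^B_W W' \circ \tau = d\tau(\nabla^N_V V')$, i.e.\ $(\nabla^M_U U') \oplus (\nabla^N_V V')$ is again tangent to $M \times_B N$. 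By tensoriality of the normal part this establishes total geodesy, and the lemma follows.

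The main obstacle I anticipate is bookkeeping rather than conceptual: making the reduction to a spanning set rigorous. A general section of $T(M \times_B N)$ is only a $C^\infty$-linear combination of the adapted fields $U \oplus V$, and such a combination is neither of product form nor $\psi$-projectable, so the clean transport of covariant derivatives through $\sigma, \tau$ is not directly available for it; this is exactly why one must first isolate the tensoriality of the second fundamental form and only then evaluate on the adapted frame. Equivalently, one could phrase the whole argument as: $\nabla^M \times \nabla^N$ is $\psi$-related to $\nabla^B \times \nabla^B$, the diagonal $\Delta$ is totally geodesic for $\nabla^B \times \nabla^B$, and the preimage under a submersion relating two connections of a totally geodesic submanifold is totally geodesic. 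That reformulation, however, runs into the same projectability subtlety and is resolved in the same way, so I would favour the direct computation above.
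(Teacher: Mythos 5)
Your argument is correct. Note first that the paper states Lemma~\ref{lem:fibered} without proof (it is declared ``straightforward''), so there is no official argument to compare against; your write-up supplies a complete justification. The three ingredients you use are all sound: (i) the equivalence between ``the connection restricts to $S$'' and total geodesy of $S$, with uniqueness of the restricted connection forced by $j$-relatedness; (ii) the tensoriality of the normal-valued form $(X,Y)\mapsto \nabla_{\tilde X}\tilde Y \bmod TS$ (which, as you implicitly use, holds for an arbitrary, not necessarily torsion-free, connection, since the offending term $\tilde X(f)\tilde Y$ is tangent to $S$ along $S$); and (iii) the fact that pairs $U\oplus V$ of $\sigma$- and $\tau$-lifts of a common $W\in\mathfrak X(B)$, with prescribed values at a point, span $T(M\times_B N)$ pointwise. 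The computation $(\nabla^M\times\nabla^N)_{U\oplus V}(U'\oplus V')=(\nabla^M_U U')\oplus(\nabla^N_V V')$ together with the $\sigma$- and $\tau$-relatedness to $\nabla^B_W W'$ then closes the argument. For what it is worth, an even shorter route in the spirit of the paper's own toolbox is via Lemma~\ref{lem:conn,F-rel}(4): the horizontal distribution of $\nabla^M\times\nabla^N$ is $H^{\nabla^M}\times H^{\nabla^N}$, the tangent bundle of the fiber product is $TM\times_{TB}TN$, and the hypotheses say that $d\sigma$, $d\tau$ carry horizontal lifts to the horizontal lift in $TB$ of the common projection, so the horizontal lift of a vector tangent to $M\times_B N$ is again tangent to $TM\times_{TB}TN$; this avoids the extension/spanning bookkeeping you rightly flag as the main nuisance. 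Either way, your proof stands.
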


\section{Multiplicative Connections}\label{Sec:Mult_Conn}

In this section we introduce a notion of multiplicative connection in the tangent bundle of a Lie groupoid and discuss its main properties. 

We begin recalling the notion of multiplicative tensor from \cite{BD2019}. We will only need $(p,1)$-tensors for which we also refer to \cite{BD2013} (see also \cite{LSX2009}). So let $\mathcal G \rightrightarrows M$ be a Lie groupoid. We will denote by $s,t,u,m,i$ the structure maps in $\mathcal G$ (source, target, unit, multiplication, inversion, respectively). The unit $u(x)$ at a point $x \in M$ will be also denoted $1_x$, the multiplication $m(g,g')$ of two composable arrows $g,g' \in \mathcal G$ will be also denoted $gg'$, and the inverse $i(g)$ of an arrow $g$ will be also denoted $g^{-1}$. The manifold of composable arrows will be denoted
\[
\mathcal G^{(2)} =\left\{(g,g') \in \mathcal G \times \mathcal G : s(g) = t(g') \right\}.
\] 
We denote by $j_{\mathcal G^{(2)}} : \mathcal G^{(2)} \hookrightarrow \mathcal G \times \mathcal G$ the inclusion.

There are various equivalent definitions of multiplicative tensor on $\mathcal G$. For $(p,1)$-tensors we will adopt the one provided by a characterization in \cite{BD2013}:
let $\mathcal T$ be a $(p,1)$-tensor on $\mathcal G$. We say that $\mathcal T$ is \emph{$(s,t)$-projectable} if there exists a, necessarily unique, $(p,1)$-tensor $\mathcal T^M$ on $M$ such that $\mathcal T, \mathcal T^M$ are both $s$-related and $t$-related. In this case, we call $\mathcal T^M$ the $(s,t)$-projection of $\mathcal T$. It is easy to see that if $\mathcal T$ is $(s,t)$-projectable, then $\mathcal T \times \mathcal T$ restricts to $\mathcal G^{(2)}$, i.e. there also exists a, necessarily unique, $(p,1)$-tensor $\mathcal T^{(2)}$ on $\mathcal G^{(2)}$ such that $\mathcal T^{(2)}, \mathcal T \times \mathcal T$ are $j_{\mathcal G^{(2)}}$-related. Here $\mathcal T \times \mathcal T$ is the unique $(p,1)$-tensor on $\mathcal G \times \mathcal G$ such that $\mathcal T \times \mathcal T, \mathcal T$ are both $\mathrm{pr}_1$-related and $\mathrm{pr}_2$-related, and $\mathrm{pr}_i : \mathcal G\times \mathcal G \to \mathcal G$ is the projection onto the $i$-th factor, $i = 1,2$. Finally, a $(p,1)$-tensor $\mathcal T$ on $\mathcal G$ is \emph{multiplicative} if it is $(s,t)$-projectable, and $\mathcal T^{(2)}, \mathcal T$ are $m$-related (we stress again that this is not the definition in \cite{BD2019}, nor that in \cite{LSX2009}, but it is an equivalent one, and the equivalence is proved in \cite{BD2013}, at least for skew-symmetric $(p,1)$-tensors. Notice that the same proof is still valid without the skew-symmetry assumption).

This definition 
translates to a definition of \emph{multiplicative connection}. Namely, let $\nabla$ be a connection in the tangent bundle $ T\mathcal G $ of a Lie groupoid $\mathcal G$.

\begin{definition}
The connection $\nabla$ is $(s,t)$\emph{-projectable} if there exists a, necessarily unique, connection $\nabla^M$ in $TM$, such that $\nabla, \nabla^M$ are both $s$-related and $t$-related. In this case we call $\nabla^M$ the $(s,t)$\emph{-projection} of $\nabla$.
\end{definition}

Notice that, in view of Lemma \ref{lem:fibered}, if the connection $\nabla$ in $ T\mathcal G $ is $(s,t)$-projectable, then the connection $\nabla^\times = \nabla \times \nabla$ in the tangent bundle of $\mathcal G \times \mathcal G$ restricts to $\mathcal G^{(2)}$, i.e. there exists a, necessarily unique, connection $\nabla^{(2)}$ in the tangent bundle of $\mathcal G^{(2)}$ such that $\nabla^{(2)}, \nabla^\times$ are $j_{\mathcal G^{(2)}}$-related.

\begin{definition}
A connection $\nabla$ in the tangent bundle $ T\mathcal G $ of a Lie groupoid $\mathcal G \rightrightarrows M$ is \emph{multiplicative} if it is $(s,t)$-projectable, and $\nabla^{(2)}, \nabla$ are $m$-related.
\end{definition}

Multiplicative connections can be characterized in various ways according to the following

\begin{proposition}\label{prop:char_mult_conn}
Let $\nabla$ be a connection in the tangent bundle $ T\mathcal G $ of a Lie groupoid $\mathcal G \rightrightarrows M$. The following conditions are equivalent:
\begin{enumerate}
\item $\nabla$ is a multiplicative connection;
\item the symmetric part $\nabla^{\mathrm{sym}}$ and the torsion $T^\nabla$ of $\nabla$ are multiplicative;
\item the horizontal distribution $H^\nabla$ is a multiplicative distribution on $ T\mathcal G  \rightrightarrows TM$ (i.e. it is a Lie subgroupoid in the Lie groupoid $T T\mathcal G  \rightrightarrows TTM$);
\item the vertical projection $\theta^\nabla$ is a multiplicative $(1,1)$-tensor on $ T\mathcal G  \rightrightarrows TM$;
\item the torsion $T^\nabla$ is multiplicative, and the geodesic spray $Z^\nabla$ is a multiplicative vector field on $ T\mathcal G  \rightrightarrows TM$.
\end{enumerate}
\end{proposition}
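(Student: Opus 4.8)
The plan is to reduce every equivalence to the pointwise characterizations of $F$-relatedness already established in Lemma \ref{lem:conn,F-rel}, applied to the three structure maps $s$, $t$, $m$ that enter the definition of multiplicativity. The guiding observation is that the definition of a multiplicative connection is nothing but the conjunction of three relatedness statements: $\nabla, \nabla^M$ are $s$-related, $\nabla, \nabla^M$ are $t$-related, and $\nabla^{(2)}, \nabla$ are $m$-related. I would prove each of (2)--(5) directly equivalent to (1) (a star pattern rather than a cycle), in each case by feeding these three statements through the corresponding condition of Lemma \ref{lem:conn,F-rel}: condition (3) for item (2), condition (4) for item (3), condition (5) for item (4), and condition (6) for item (5).

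First I would record the compatibility of the operations $\nabla \mapsto \nabla^{\mathrm{sym}}$, $\nabla \mapsto T^\nabla$, $\nabla \mapsto H^\nabla$, $\nabla \mapsto \theta^\nabla$, $\nabla \mapsto Z^\nabla$ with the two constructions used in the definition of multiplicativity, namely the product $\nabla \times \nabla$ and the restriction $\nabla^{(2)}$. Concretely, I would show $(\nabla \times \nabla)^{\mathrm{sym}} = \nabla^{\mathrm{sym}} \times \nabla^{\mathrm{sym}}$, $T^{\nabla \times \nabla} = T^\nabla \times T^\nabla$, and likewise for $H$, $\theta$, $Z$; and then that the same operations commute with restriction, so that $T^{\nabla^{(2)}} = (T^\nabla)^{(2)}$, $\theta^{\nabla^{(2)}} = (\theta^\nabla)^{(2)}$, $Z^{\nabla^{(2)}} = (Z^\nabla)^{(2)}$, and $H^{\nabla^{(2)}} = (H^\nabla)^{(2)}$. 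Each identity is immediate from Lemma \ref{lem:conn,F-rel} together with the uniqueness built into the definitions of $\nabla \times \nabla$ (via $\mathrm{pr}_i$-relatedness) and of $\nabla^{(2)}$ (via $j_{\mathcal G^{(2)}}$-relatedness, Lemma \ref{lem:fibered}): in each case both sides are the unique object related, in the appropriate sense, to the corresponding object of $\nabla$, hence they coincide.

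With these compatibilities in hand, each equivalence becomes bookkeeping. For (1)$\Leftrightarrow$(2), condition (3) of Lemma \ref{lem:conn,F-rel} turns $s$- and $t$-relatedness of $\nabla, \nabla^M$ into simultaneous $s$- and $t$-relatedness of their symmetric parts and their torsions, so $(s,t)$-projectability of $\nabla$ is equivalent to simultaneous $(s,t)$-projectability of $\nabla^{\mathrm{sym}}$ and $T^\nabla$; under the identifications above the $m$-relatedness of $\nabla^{(2)}, \nabla$ becomes the $m$-relatedness of $(\nabla^{\mathrm{sym}})^{(2)}, \nabla^{\mathrm{sym}}$ together with that of $(T^\nabla)^{(2)}, T^\nabla$, which is exactly the statement that both $\nabla^{\mathrm{sym}}$ and $T^\nabla$ are multiplicative. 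The equivalence (1)$\Leftrightarrow$(4) is parallel, using condition (5) and $\theta^{\nabla^{(2)}} = (\theta^\nabla)^{(2)}$ to recognise multiplicativity of the $(1,1)$-tensor $\theta^\nabla$ on the tangent prolongation $T\mathcal G \rightrightarrows TM$; and (1)$\Leftrightarrow$(5) uses condition (6), noting that $Z^\nabla$ depends only on $\nabla^{\mathrm{sym}}$, to split multiplicativity of $\nabla$ into multiplicativity of $T^\nabla$ and multiplicativity of the spray $Z^\nabla$ as a vector field on $T\mathcal G \rightrightarrows TM$.

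The step I expect to require the most care is (1)$\Leftrightarrow$(3), because here the relatedness conditions must be matched against the subgroupoid conditions for $H^\nabla \subseteq TT\mathcal G$ inside the double tangent prolongation $TT\mathcal G \rightrightarrows TTM$. The key point to verify is that the source, target and multiplication of $TT\mathcal G \rightrightarrows TTM$ are precisely $d(ds)$, $d(dt)$, $d(dm)$, so that, via condition (4) of Lemma \ref{lem:conn,F-rel}, the $s$- and $t$-relatedness of $\nabla, \nabla^M$ say exactly that source and target map $H^\nabla$ into $H^{\nabla^M}$, while the $m$-relatedness of $\nabla^{(2)}, \nabla$ says that the multiplication maps composable pairs of $H^\nabla$ back into $H^\nabla$; together with the identification $H^{\nabla^{(2)}} = (H^\nabla)^{(2)}$ of composable arrows, these are exactly the conditions for $H^\nabla \rightrightarrows H^{\nabla^M}$ to be a Lie subgroupoid. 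One must additionally check that $H^\nabla$ is a smooth subbundle of the correct rank, so that it genuinely defines a multiplicative distribution, but this is automatic from the vector bundle structure of $H^\nabla$ and the compatibilities recorded above.
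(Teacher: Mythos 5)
Your proposal is correct and follows essentially the same route as the paper, which simply declares the statement a straightforward consequence of Lemma \ref{lem:conn,F-rel} (deferring the equivalence $(3)\Leftrightarrow(4)$ to \cite{BD2019}) and leaves the details to the reader. You have merely written out those details — the compatibility of $\mathrm{sym}$, $T$, $H$, $\theta$, $Z$ with products and restrictions, and the matching of each condition of Lemma \ref{lem:conn,F-rel} against the three relatedness statements defining multiplicativity — exactly as the authors intend.
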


\begin{proof}
The statement is a straightforward consequence of Lemma \ref{lem:conn,F-rel} and we leave the simple details to the reader. We only remark here that the equivalence $(3) \Leftrightarrow (4)$ is discussed in \cite{BD2019}.
\end{proof}

\begin{remark}
Let $\mathcal G \rightrightarrows M$ be a Lie groupoid. It is clear that the space of multiplicative connections in the tangent bundle $ T\mathcal G $ is either empty or it is an affine space modeled on the vector space of multiplicative $(2,1)$-tensors on $\mathcal G$. 
\end{remark}

We postpone examples to Section \ref{Sec:Examples}. Here we study various useful properties of multiplicative connections.

\begin{proposition}\label{prop:simpl_conn}
Let $\mathcal G \rightrightarrows M$ be a Lie groupoid, and let $\nabla$ be a multiplicative connection in $ T\mathcal G $. Denote by $\nabla^M$ (resp., $\nabla^{(2)}$) the $(s,t)$-projection of $\nabla$ (resp., the restriction of $\nabla \times \nabla$ to $\mathcal G^{(2)}$). Consider also the embeddings
\[
\epsilon_1 : \mathcal G \to \mathcal G^{(2)}, \quad g \mapsto (1_{t(g)}, g), \quad \text{and} \quad \epsilon_2 : \mathcal G \to \mathcal G^{(2)}, \quad g \mapsto (g, 1_{s(g)}).
\]
Then 
\begin{enumerate}
\item $\nabla^M, \nabla$ are $u$-related,
\item $\nabla, \nabla^{(2)}$ are $\epsilon_i$-related, $i = 1, 2$, and
\item $\nabla$ is self-$i$-related.
\end{enumerate}
\end{proposition}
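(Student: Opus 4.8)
The plan is to deduce all three statements from Proposition \ref{prop:char_mult_conn}(3), which tells us that the horizontal distribution $H^\nabla$ is a Lie subgroupoid of the double tangent prolongation $TT\mathcal G \rightrightarrows TTM$, with object manifold the horizontal distribution $H^{\nabla^M}$ of the $(s,t)$-projection. The structure maps of the prolongation groupoid $T\mathcal G \rightrightarrows TM$ are the tangent maps $Ts,Tt,Tu,Tm,Ti$ of those of $\mathcal G$, so the structure maps of $TT\mathcal G \rightrightarrows TTM$ are $T(Ts),\dots,T(Ti)$. Throughout I would combine the horizontal-distribution characterization of relatedness (Lemma \ref{lem:conn,F-rel}(4)) with the elementary composition principle: if $\nabla^1,\nabla^2$ are $F$-related and $\nabla^2,\nabla^3$ are $G$-related, then $\nabla^1,\nabla^3$ are $(G\circ F)$-related, which is immediate from $T(T(G\circ F))=T(TG)\circ T(TF)$.

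For (1) and (3) I would simply invoke that a Lie subgroupoid is closed under the unit and inversion maps of the ambient groupoid. Closedness of $H^\nabla$ under the unit map $T(Tu)$ reads $T(Tu)(H^{\nabla^M})\subseteq H^\nabla$, which by Lemma \ref{lem:conn,F-rel}(4) is exactly the statement that $\nabla^M,\nabla$ are $u$-related, giving (1); note that this genuinely uses multiplicativity (mere $(s,t)$-projectability would not suffice, since $\ker T_{1_x}s\cap\ker T_{1_x}t$ is the, possibly nonzero, isotropy). Closedness under the inversion map $T(Ti)$ reads $T(Ti)(H^\nabla)\subseteq H^\nabla$, with equality because $i$ is an involution, which by the same characterization says that $\nabla$ is self-$i$-related, giving (3).

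For (2) I would treat $\epsilon_2$ in detail ($\epsilon_1$ being symmetric). Writing $j=j_{\mathcal G^{(2)}}$, one has $\mathrm{pr}_1\circ j\circ\epsilon_2=\id_{\mathcal G}$ and $\mathrm{pr}_2\circ j\circ\epsilon_2=u\circ s$. Since $\nabla\times\nabla$ is $\mathrm{pr}_i$-related to $\nabla$ and its horizontal distribution splits as a product, a map into $\mathcal G\times\mathcal G$ is related to $\nabla\times\nabla$ iff each of its two components is related to $\nabla$; hence $\nabla,\nabla\times\nabla$ are $(j\circ\epsilon_2)$-related because $\nabla,\nabla$ are trivially $\id_{\mathcal G}$-related and are $(u\circ s)$-related by composing the $s$-relatedness of $\nabla,\nabla^M$ with part (1). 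It then remains to descend through the embedding $j$ to obtain relatedness with the \emph{restricted} connection $\nabla^{(2)}$.

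The main obstacle is precisely this descent step. I would establish the identification $H^{\nabla^{(2)}}=\big(T(Tj)\big)^{-1}\!\big(H^{\nabla\times\nabla}\big)\cap T(T\mathcal G^{(2)})$, extracting it from the proof of Lemma \ref{lem:fibered}: one inclusion is the $j$-relatedness of $\nabla^{(2)}$ and $\nabla\times\nabla$, and the reverse follows because $T(Tj)$ is injective and sends the vertical bundle of $T\mathcal G^{(2)}\to\mathcal G^{(2)}$ into that of $T(\mathcal G\times\mathcal G)\to\mathcal G\times\mathcal G$ transversally to $H^{\nabla\times\nabla}$, forcing the vertical component of any horizontal lift to vanish. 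Granting this, (2) is immediate: $\epsilon_2$ already maps into $\mathcal G^{(2)}$ and $T(T\epsilon_2)(H^\nabla)$ lies in $T(T\mathcal G^{(2)})$, while $T(Tj)\circ T(T\epsilon_2)=T(T(j\circ\epsilon_2))$ carries $H^\nabla$ into $H^{\nabla\times\nabla}$ by the previous paragraph, whence $T(T\epsilon_2)(H^\nabla)\subseteq H^{\nabla^{(2)}}$, i.e. $\nabla,\nabla^{(2)}$ are $\epsilon_2$-related. A minor secondary point to pin down is that the object manifold of the subgroupoid $H^\nabla$ is indeed $H^{\nabla^M}$, which follows from the $(s,t)$-projectability of $\nabla$ together with surjectivity of $T(Ts)$.
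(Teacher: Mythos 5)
Your proof is correct, but it takes a genuinely different route from the paper's. The paper argues via Lemma \ref{lem:conn,F-rel}.(6): it reduces each relatedness claim to the corresponding claims for the torsion and the geodesic spray, observes that $T^{\nabla\times\nabla}=T^\nabla\times T^\nabla$ and $Z^{\nabla\times\nabla}=Z^\nabla\times Z^\nabla$, and then invokes the analogous statements already proved for multiplicative $(p,1)$-tensors (and multiplicative vector fields) in Bursztyn--Drummond. You instead work with Lemma \ref{lem:conn,F-rel}.(4) and the Lie-subgroupoid characterization of Proposition \ref{prop:char_mult_conn}.(3), so that (1) and (3) become literal instances of the closure of a subgroupoid under units and inversion, and (2) reduces to a component-wise check plus the descent through $j_{\mathcal G^{(2)}}$ via the identification $H^{\nabla^{(2)}}=\bigl(T(Tj)\bigr)^{-1}\bigl(H^{\nabla\times\nabla}\bigr)\cap T(T\mathcal G^{(2)})$, which you justify correctly (injectivity of $T(Tj)$ plus the fact that it carries verticals to verticals, which meet $H^{\nabla\times\nabla}$ only in zero). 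What the paper's route buys is brevity and uniformity with the rest of its formalism --- everything is outsourced to known facts about multiplicative tensors and vector fields; what your route buys is self-containedness (no appeal to the tensor results of \cite{BD2013}) and a conceptual explanation of \emph{why} the statement holds: it is exactly the groupoid axioms for the subgroupoid $H^\nabla\rightrightarrows H^{\nabla^M}$ of $TT\mathcal G\rightrightarrows TTM$. Your two side remarks --- that (1) genuinely needs multiplicativity rather than mere $(s,t)$-projectability, and that one must verify the base of the subgroupoid $H^\nabla$ is $H^{\nabla^M}$ (which your surjectivity argument for $T(Ts)$ restricted to horizontals settles) --- are both pertinent and correctly handled.
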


\begin{proof}
The simplest proof is using that the analogous statement holds for $(p,1)$-tensors (see \cite[Section 4]{BD2013}) together with Lemma \ref{lem:conn,F-rel}.(6) and the simple fact that the torsion and the geodesic spray of $\nabla \times \nabla$ are $T^\nabla \times T^\nabla$ and $Z^\nabla \times Z^\nabla$ respectively. We leave the straightforward details to the reader. 
\end{proof}

\begin{remark}
Let $\mathcal G \rightrightarrows M$ be a Lie groupoid, and let $\nabla$ be a multiplicative connection in $ T\mathcal G $. Recall that the \emph{nerve} of $\mathcal G$ is the simplicial manifolds $\mathcal G^{(\bullet)}$ whose $k$-th level is the space $\mathcal G^{(k)}$ of strings of $k$ composable arrows in $\mathcal G$, together with the usual faces and degeneracies as structure maps. It is easy to see along similar lines as above that, for all $k$, the product connection
\[
\underset{\text{$k$ times}}{\underbrace{\nabla \times \dots \times \nabla}}
\]
restricts to a unique connection $\nabla^{(k)}$ in the space $\mathcal G^{(k)}$ of composable arrows. Additionally the $\nabla^{(k)}$'s are all related via the structure maps of the nerve. This can be rephrased saying that a multiplicative connection in $ T\mathcal G $ determines in a canonical way a \emph{simplicial connection} in the simplicial vector bundle $ T\mathcal G ^{(\bullet)} \to \mathcal G^{(\bullet)}$.
\end{remark}

We now discuss existence of multiplicative connections.
As we will see in the next section, not all Lie groupoids admit multiplicative connections in their tangent bundles. Here, we identify the \emph{obstruction} to the existence. It is a canonical cohomology class in a certain cochain complex naturally attached to every Lie groupoid. As we will briefly discuss, such complex is a close relative of Crainic-Mestre-Struchiner \emph{deformation complex} of the Lie groupoid \cite{CMS2018}. We believe that our ``\emph{obstruction class}'' can be seen as a groupoid (global) version of the \emph{Atiyah class of a dg-vector bundle} discussed in \cite{MSX2015}. See Appendix \ref{app:A} for a short comment on this.

We begin our discussion characterizing multiplicative connections in the most appropriate way for our purposes. To do this it is helpful to introduce the \emph{division map}:
\[
\bar m : \mathcal G^{[2]} := \mathcal G \mathbin{{}_s \times_s} \mathcal G \to \mathcal G, \quad (g,h) \mapsto gh{}^{-1}.
\]
If $\nabla$ is an $s$-projectable connection in $T\mathcal G$ then, from Lemma \ref{lem:fibered}, the connection $\nabla \times \nabla$ restricts to a unique connection $\nabla^{[2]}$ in $T \mathcal G^{[2]}$.

\begin{proposition}\label{prop:mult_conn_div}
Let $\mathcal G \rightrightarrows M$ be a Lie groupoid, and let $\nabla$ be a linear connection in $ T\mathcal G $. The following two conditions are equivalent:
\begin{enumerate}
\item $\nabla$ is a multiplicative connection;
\item $\nabla$ is $s$-projectable and $\nabla^{[2]}, \nabla$ are $\bar m$-related.
\end{enumerate}
\end{proposition}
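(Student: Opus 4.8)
The plan is to reduce everything to two elementary principles about $F$-relatedness of connections, both immediate from the characterization in Lemma~\ref{lem:conn,F-rel}.(2): (i) \emph{relatedness composes}, i.e. if $\nabla^M,\nabla^N$ are $F$-related and $\nabla^N,\nabla^P$ are $G$-related then $\nabla^M,\nabla^P$ are $(G\circ F)$-related (pull back the defining identity $\nabla^N(G^\ast\eta)=G^\ast(\nabla^P\eta)$ along $F$); and (ii) \emph{relatedness descends along surjective submersions}, i.e. if $\pi\colon P\to Q$ is a surjective submersion relating $\nabla^P$ to $\nabla^Q$ and $F=G\circ\pi$ relates $\nabla^P$ to $\nabla^R$, then $G$ relates $\nabla^Q$ to $\nabla^R$ (the identity $\pi^\ast(\nabla^Q(G^\ast\omega))=\pi^\ast(G^\ast(\nabla^R\omega))$ holds by (i), and $\pi^\ast$ is injective on covariant tensors because $d\pi$ is fibrewise surjective). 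The whole argument then rests on the two factorizations $\bar m=m\circ(\id\times i)|_{\mathcal G^{[2]}}$ and $m=\bar m\circ(\id\times i)|_{\mathcal G^{(2)}}$ of the division and multiplication maps, where $\id\times i$ is the diffeomorphism of $\mathcal G\times\mathcal G$ interchanging $\mathcal G^{(2)}$ and $\mathcal G^{[2]}$, together with the fact that $\bar m$ is a surjective submersion (being $m$ precomposed with this diffeomorphism).

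For $(1)\Rightarrow(2)$ I would argue as follows. A multiplicative connection is in particular $s$-projectable, so $\nabla^{[2]}$ is defined. By Proposition~\ref{prop:simpl_conn}.(3) $\nabla$ is self-$i$-related, hence $\id\times i$ relates $\nabla^\times$ to $\nabla^\times$; restricting to the fibre products it relates $\nabla^{[2]}$ to $\nabla^{(2)}$. Composing this with the $m$-relatedness of $\nabla^{(2)}$ and $\nabla$ (multiplicativity) through $\bar m=m\circ(\id\times i)|_{\mathcal G^{[2]}}$ yields that $\nabla^{[2]},\nabla$ are $\bar m$-related, which is $(2)$.

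The converse $(2)\Rightarrow(1)$ is the substantial direction, and the crux is that we may no longer invoke Proposition~\ref{prop:simpl_conn} (which presupposes multiplicativity): self-$i$-relatedness must be reconstructed from scratch. First I would note that the swap $\mathrm{sw}\colon\mathcal G^{[2]}\to\mathcal G^{[2]}$, $(g,h)\mapsto(h,g)$, relates $\nabla^{[2]}$ to itself (it is the restriction of the swap of $\mathcal G\times\mathcal G$, which preserves $\nabla^\times$), and that $i\circ\bar m=\bar m\circ\mathrm{sw}$ since $(gh^{-1})^{-1}=hg^{-1}$. Hence $i\circ\bar m$ relates $\nabla^{[2]}$ to $\nabla$, and applying the descent principle (ii) to the surjective submersion $\bar m$ gives that $i$ relates $\nabla$ to $\nabla$, i.e. $\nabla$ is self-$i$-related. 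Since $t=s\circ i$, composing this $i$-relatedness with the assumed $s$-projectability shows $t$ relates $\nabla$ to the same $\nabla^M$; thus $\nabla$ is $(s,t)$-projectable and $\nabla^{(2)}$ is defined. Finally, self-$i$-relatedness makes $\id\times i$ preserve $\nabla^\times$, so its restriction $\Psi\colon\mathcal G^{(2)}\to\mathcal G^{[2]}$ (a diffeomorphism) relates $\nabla^{(2)}$ to $\nabla^{[2]}$; composing through $m=\bar m\circ\Psi$ with the hypothesis that $\nabla^{[2]},\nabla$ are $\bar m$-related yields the $m$-relatedness of $\nabla^{(2)}$ and $\nabla$, i.e. multiplicativity.

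I expect the remaining difficulties to be purely bookkeeping: checking that $\nabla^\times$ restricts compatibly to both fibre products $\mathcal G^{(2)}$ and $\mathcal G^{[2]}$ and that these restrictions are intertwined by $\id\times i$ (this is exactly Lemma~\ref{lem:fibered} together with uniqueness of restricted connections), and confirming that $\bar m$ is a surjective submersion so that the descent principle applies. The one conceptual point to get right is the non-circular order in $(2)\Rightarrow(1)$: self-$i$-relatedness via descent must be established first, since it is precisely what upgrades $s$-projectability to $(s,t)$-projectability and unlocks the final factorization.
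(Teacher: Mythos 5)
Your argument is correct, but it takes a genuinely different route from the one the paper intends. The paper omits the proof and points to the strategy of Proposition \ref{prop:simpl_conn}: reduce everything to the torsion and the geodesic spray via Lemma \ref{lem:conn,F-rel}.(6), use that the torsion and spray of $\nabla\times\nabla$ are $T^\nabla\times T^\nabla$ and $Z^\nabla\times Z^\nabla$, and then invoke the known equivalence between multiplicativity and the division-map condition for $(p,1)$-tensors (\cite{BD2013}) and for vector fields. You instead work directly with the connection, using only the pullback characterization Lemma \ref{lem:conn,F-rel}.(2) to establish that relatedness composes and descends along surjective submersions, together with the two factorizations $\bar m=m\circ(\id\times i)|_{\mathcal G^{[2]}}$ and $m=\bar m\circ(\id\times i)|_{\mathcal G^{(2)}}$. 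Both directions check out: in $(1)\Rightarrow(2)$ the appeal to Proposition \ref{prop:simpl_conn}.(3) for self-$i$-relatedness is legitimate, and in $(2)\Rightarrow(1)$ your reconstruction of self-$i$-relatedness from the swap symmetry of $\mathcal G^{[2]}$, the identity $i\circ\bar m=\bar m\circ\mathrm{sw}$, and descent along the surjective submersion $\bar m$ is the right non-circular order (it is exactly what upgrades $s$-projectability to $(s,t)$-projectability via $t=s\circ i$). What your approach buys is self-containedness: it never needs the corresponding statements for tensors or multiplicative vector fields from the literature, only elementary functoriality of $F^\ast$ and injectivity of $\pi^\ast$ on covariant tensors for a surjective submersion $\pi$. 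What the paper's route buys is uniformity: the same two-line reduction (torsion plus spray) disposes of Propositions \ref{prop:simpl_conn} and \ref{prop:mult_conn_div} simultaneously and matches the philosophy used throughout the paper. The steps you defer as bookkeeping (that the restrictions of $\nabla\times\nabla$ to the two fibre products are intertwined by $\id\times i$, i.e.\ the cancellation of the embeddings $j$) are genuinely routine, e.g.\ by testing Lemma \ref{lem:conn,F-rel}.(2) on $1$-forms pulled back from the ambient product, which locally generate.
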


\begin{proof}
There is a simple proof in the same spirit as that of Proposition \ref{prop:simpl_conn} and we omit it.
\end{proof}

From now on, given a manifold $N$, we will denote by $T^{p,q} N = (T^\ast
N)^{\otimes p} \otimes (T N)^{\otimes q}$
the bundle of
$(p, q)$-tensors on $N$, and by $\mathfrak T^{p,q}(N) = \Gamma(T^{p,q}N)$ the module of sections of $T^{p,q}N$. Now, take any $s$-projectable connection $\nabla$ in $ T\mathcal G  \to \mathcal G$. Such connection always exists. Denote by $\nabla^M$ its $s$-projection. The combination
\[
 \bar m{}^\ast \circ \nabla - \nabla^{[2]} \circ \bar m{}^\ast : \Omega^1 (\mathcal G) \to \mathfrak T^{2,0}(\mathcal G^{[2]})
\]
is a module homomorphism covering the algebra homomorphism $\bar m{}^\ast : C^\infty (\mathcal G) \to C^\infty (\mathcal G^{[2]})$, hence, it can be seen as a vector bundle map $\bar m{}^\ast T^\ast \mathcal G \to T^{2,0} \mathcal G^{[2]}$, or, equivalently, as a vector bundle map
\[
\mathsf{At}_\nabla : T^{0,2} \mathcal G^{[2]} \to \bar m{}^\ast  T\mathcal G .
\]
The notation $\mathsf{At}$ stands for ``Atiyah'' and it is explained in Appendix \ref{app:A}. In the same way, the combination
\[
t^\ast \circ \nabla^M - \nabla \circ t^\ast : \Omega^1 (M) \to \mathfrak T^{2,0}(\mathcal G)
\]
can be seen as a vector bundle map
\[
\mathcal T^M_\nabla : T^{0,2} \mathcal G \to t^\ast TM.
\]
It is easy to see that the following diagram commutes
\[
\begin{array}{c}
\xymatrix{ T^{0,2} \mathcal G^{[2]} \ar[r]^-{\mathsf{At}_\nabla} \ar[d]_-{(d \mathrm{pr}_2){}^{\otimes 2}} & \bar m{}^\ast  T\mathcal G  = \mathcal G^{[2]} \mathbin{{}_{\bar m} \times}  T\mathcal G  \ar[d]^-{\mathrm{pr}_2 \times ds} \\
T^{0,2} \mathcal G \ar[r]^-{\mathcal T^M_\nabla} & t^\ast TM = \mathcal G \mathbin{{}_t \times} TM}
\end{array}.
\]

Actually, $\mathsf{At}_\nabla$ is a $1$-cocycle in an appropriate cochain complex $\bar C{}^\bullet_{\mathrm{def}} (\mathcal G, T^{2,0})$ that we now define. First of all, $\bar C{}^\bullet_{\mathrm{def}} (\mathcal G, T^{2,0})$ will be concentrated in degrees $k \geq -1$. Degree $-1$ cochains are vector bundle maps
\[
U : T^{0,2} M \to A
\]
where $A = \mathrm{Lie}(\mathcal G)$ is the Lie algebroid of $\mathcal G$. Degree $0$ cochains are $s$-projectable $(2,1)$-tensors on $\mathcal G$, i.e. $(2,1)$-tensors $U \in \mathfrak T^{2,1}(\mathcal G)$ for which there exists a $(2,1)$-tensor $U^M \in \mathfrak T^{2,1} (M)$ such that the following diagram commutes:
\[
\begin{array}{c}
\xymatrix{ T^{0,2} \mathcal G \ar[r]^-{U} \ar[d]_-{ds^{\otimes 2}} &  T\mathcal G  \ar[d]^-{ds} \\
T^{0,2} M \ar[r]^-{U^M} & TM}
\end{array}.
\]
In order to describe $k$-cochains for $k > 0$, we introduce the manifold
\[
\mathcal G^{[k+1]} = \underset{\text{$k$ times}}{\underbrace{\mathcal G \times_s  \cdots \times_s \mathcal G}}
\]
of $(k+1)$-tuples of arrows in $\mathcal G$ with the same source. As
\[
 T\mathcal G ^{[k+1]} \cong \underset{\text{$k$ times}}{\underbrace{ T\mathcal G  \times_{ds} \cdots \times_{ds}  T\mathcal G }},
\]
in the following, we will always interpret a tangent vector to $\mathcal G^{[k+1]}$ as a $(k+1)$-tuple of tangent vectors to $\mathcal G$ (with the same $ds$-projection). We also introduce a special notation for a decomposable element in $T^{0,2} \mathcal G^{[k+1]}$. Namely, let $(v_1^1, \ldots, v_{k+1}^1), (v_1^2, \ldots, v_{k+1}^2) \in T \mathcal G^{[k+1]}$ be tangent vectors at the same point of $\mathcal G^{[k+1]}$. We denote
\[
(v_1^\otimes, \ldots, v_{k+1}^\otimes) := (v_1^1, \ldots, v_{k+1}^1) \otimes (v_1^2, \ldots, v_{k+1}^2) \in T^{0,2} \mathcal G^{[k+1]}. 
\]

The space $\bar C{}^k_{\mathrm{def}} (\mathcal G, T^{2,0})$ now consists of smooth maps
\[
U : T^{0,2} \mathcal G^{[k+1]} \to  T\mathcal G , 
\]
such that 
\begin{enumerate}
\item $U$ is a vector bundle map 
\[
U : T^{0,2} \mathcal G^{[k+1]} \to (\bar m \circ (\mathrm{pr}_1 \times \mathrm{pr}_2))^\ast  T\mathcal G 
\]
where $\mathrm{pr}_i : \mathcal G^{[k+1]} \to \mathcal G$ is the projection onto the $i$-th entry, and
\item there exists another vector bundle map
\[
U^M : T^{0,2} \mathcal G^{[k]} \to (t \circ \mathrm{pr}_1)^\ast TM
\]
such that the following diagram commutes: 
\[
\begin{array}{c}
\xymatrix{ T^{0,2} \mathcal G^{[k+1]} \ar[r]^-{U} \ar[d]_-{d(\mathrm{pr}_2 \times \cdots \times \mathrm{pr}_{k+1})^{\otimes 2}} & (\bar m \circ (\mathrm{pr}_1 \times \mathrm{pr}_2))^\ast  T\mathcal G  \ar[d]^-{(\mathrm{pr}_2 \times \cdots \times \mathrm{pr}_{k+1}) \times ds} \\
T^{0,2} \mathcal G^{[k]} \ar[r]^-{U^M} & (t \circ \mathrm{pr}_1)^\ast TM}
\end{array}.
\]
\end{enumerate}

For instance, given any $s$-projectable connection $\nabla$, the vector bundle map $\mathsf{At}_\nabla$ defined above is a degree $1$-cochain in $\bar C{}^\bullet_{\mathrm{def}} (\mathcal G, T^{2,0})$. For every $U \in \bar C{}^{\geq 0}_{\mathrm{def}} (\mathcal G, T^{2,0})$, we call $U^M$ the $M$-projection of $U$.

Next we describe the differential $\bar \delta : \bar C{}^\bullet_{\mathrm{def}} (\mathcal G, T^{2,0}) \to \bar C{}^{\bullet + 1}_{\mathrm{def}} (\mathcal G, T^{2,0})$.  We begin with $\bar \delta : \bar C{}^{-1}_{\mathrm{def}} (\mathcal G, T^{2,0}) \to \bar C{}^{0}_{\mathrm{def}} (\mathcal G, T^{2,0})$. So take a $-1$-cochain $U : T^{0,2} M \to A$. For every $g \in \mathcal G$, and $v,v' \in T_g G$, we put
\[
\bar \delta U (v \otimes v') = (dR_g \circ U \circ dt^{\otimes 2}) (v \otimes v') + (dL_g \circ di \circ U \circ ds{}^{\otimes 2} )(v \otimes v'),
\]
where $R_g$ (resp., $L_g$) denotes right (resp., left) translation by $g$. It is easy to see that $\bar \delta U$ is a well defined $0$-cochain, whose $M$-projection $(\bar \delta U)^M$ is given by
\[
(\bar \delta U)^M = \rho_A \circ U : T^{2,0} M \to TM,
\]
where $\rho_A : A \to TM$ is the anchor map.

Now let $U \in  \bar C{}^{k}_{\mathrm{def}} (\mathcal G, T^{2,0})$, with $k \geq 0$. For all $(v^\otimes_1, \ldots, v^\otimes_{k+2}) \in T^{0,2} \mathcal G^{[k+2]}$ we put
\begin{equation}\label{eq:delta_bar}
\begin{aligned}
 \bar \delta U (v^\otimes_1, \ldots, v^\otimes_{k+2}) 
& = - d \bar m \big( U \left( v^\otimes_1, v^\otimes_3, \ldots, v^\otimes_{k+2}\right), U \left( v^\otimes_2, \ldots, v^\otimes_{k+2}\right) \big) \\
& \quad + \sum_{i = 3}^{k+2} (-)^{i+1} U\left(v^\otimes_1, \ldots, \widehat{v^\otimes_i}, \ldots, v^\otimes_{k+2} \right) \\
& \quad + (-)^{k} U \big( d \bar m{}^{\otimes 2} (v^\otimes_1, v^\otimes_{k+2}), \ldots, d \bar m{}^{\otimes 2} (v^\otimes_{k+1}, v^\otimes_{k+2}) \big),
\end{aligned}
\end{equation}
where a hat ``$\widehat{-}$'' denotes omission. A straightforward computation exploiting that $s \circ \bar m = t \circ \mathrm{pr}_2$ shows that $\bar \delta U$ is a well-defined $(k+1)$-cochain whose $M$-projection $(\bar \delta U)^M$ is given by
\[
\begin{aligned}
 (\bar \delta U)^M (v^\otimes_2, \ldots, v^\otimes_{k+2})
& = -dt \big(U (v^\otimes_2, \ldots, v^\otimes_{k+2}) \big) \\
& \quad + \sum_{i = 3}^{k+2} (-)^{i+1} U^M (v^\otimes_2, \ldots, \widehat{v^\otimes_i}, \ldots, v^\otimes_{k+2}) \\
& \quad + (-)^{k} U^M \big(d \bar m{}^{\otimes 2} (v^\otimes_2, v^\otimes_{k+2}), \ldots, d \bar m{}^{\otimes 2} (v^\otimes_{k+1}, v^\otimes_{k+2})\big).
\end{aligned}
\]
A direct check reveals that $\bar \delta$ is indeed a differential.

\begin{remark}
The definition of $\bar C{}^{\bullet}_{\mathrm{def}} (\mathcal G, T^{2,0})$ is very similar to that of Crainic-Mestre-Struchiner deformation complex of a Lie groupoid $\mathcal G \rightrightarrows M$ in the $\bar m$-version $\bar C{}^{\bullet}_{\mathrm{def}}(\mathcal G)$ (see \cite[Appendix]{CMS2018}), and the definition of the differential $\bar \delta$ is formally identical up to replacing points in $\mathcal G^{[k]}$ with points in $T^{2,0} \mathcal G^{[k]}$. This is the main reason why we chose a similar notation $\bar C{}^{\bullet}_{\mathrm{def}} (\mathcal G, T^{2,0})$ for our complex. Notice that the proof that $\bar \delta$ is a differential can be performed exactly as in \cite{CMS2018}. We leave details to the reader.
\end{remark}

\begin{remark}\label{rem:delta_2,1_tensors}
$0$-cocycles in the deformation complex $\bar C{}^{\bullet}_{\mathrm{def}}(\mathcal G)$ of Crainic-Mestre-Struchiner are multiplicative vector fields on $\mathcal G$. Similarly, $0$-cocycles in the complex $\bar C{}^{\bullet}_{\mathrm{def}}(\mathcal G, T^{2,0})$ are exactly multiplicative $(2,1)$-tensors. We leave details to the reader and we only sketch the main steps of the proof. Let $\mathcal T \in \mathfrak T^{2,1} (\mathcal G)$ and interpret it as a module homomorphisms
$
\mathcal T : \Omega^1 (\mathcal G) \to \mathfrak T^{2,0}(\mathcal G)
$.
If $\mathcal T \in \mathfrak T^{2,1} (\mathcal G)$ is $s$-projectable, then the tensor $\mathcal T \times \mathcal T$ restricts to a unique tensor $\mathcal T^{[2]} \in \mathfrak T^{2,1} (\mathcal G^{[2]})$. Similarly as for connections in Proposition \ref{prop:mult_conn_div}, $\mathcal T$ is a multiplicative tensor iff the combination
$
\mathcal S = \bar m{}^{\ast } \circ \mathcal T - \mathcal T^{[2]} \circ \bar m{}^\ast
$
vanishes. Now, $\mathcal T$ can be interpreted as a $0$-cochain in $\bar C{}^{\bullet}_{\mathrm{def}}(\mathcal G, T^{2,0})$. Similarly, $\mathcal S$ can be interpreted as a $1$-cochain in $\bar C{}^{\bullet}_{\mathrm{def}}(\mathcal G, T^{2,0})$ and, if we do so, a straightforward computation reveals that $\mathcal S = \bar \delta \mathcal T$.

Finally, there are obvious versions of the cochain complex $\bar C{}^{\bullet}_{\mathrm{def}}(\mathcal G, T^{2,0})$ where the tensor bundles $T^{2,0}\mathcal G^{[k]}$ are replaced by $T^{p,0}\mathcal G^{[k]}$, for arbitrary $p$, and such that $1$-cocycles are exactly multiplicative $(p,1)$-tensors.
\end{remark}

The following theorem should be compared with \cite[Proposition 2.1]{MSX2015}.

\begin{theorem}\label{theor:obst_class}
Let $\mathcal G \rightrightarrows M$ be a Lie groupoid and pick any $s$-projectable connection $\nabla$ in $ T\mathcal G $. Consider the $1$-cochain $\mathsf{At}_\nabla \in \bar C{}^1_{\mathrm{def}}(\mathcal G, T^{2,0})$. Then
\begin{enumerate}
\item $\mathsf{At}_\nabla$ is a $1$-cocycle, i.e. $\bar \delta \mathsf{At}_\nabla = 0$;
\item the cohomology class $[\mathsf{At}_\nabla] \in H^1 (\bar C{}^\bullet_{\mathrm{def}}(\mathcal G, T^{2,0}), \bar \delta)$ is independent of the choice of $\nabla$;
\item the cohomology class $[\mathsf{At}_\nabla]$ vanishes iff there exists a multiplicative connection in $ T\mathcal G $.
\end{enumerate} 
\end{theorem}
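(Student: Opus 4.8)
The plan is to reduce the whole theorem to a single algebraic relation: that $\mathsf{At}$ is an affine map on the space of $s$-projectable connections whose linearization is $-\bar\delta$. Concretely, I would first check that if $\nabla$ is $s$-projectable and $\mathcal T$ is an $s$-projectable $(2,1)$-tensor (that is, a $0$-cochain), then $\nabla + \mathcal T$ is again $s$-projectable and
\[
\mathsf{At}_{\nabla + \mathcal T} = \mathsf{At}_\nabla - \bar\delta\mathcal T .
\]
This follows from the definition of $\mathsf{At}$ together with Remark \ref{rem:delta_2,1_tensors}: dualizing, $\mathsf{At}_\nabla$ is the operator $\theta \mapsto \bar m{}^\ast(\nabla\theta) - \nabla^{[2]}(\bar m{}^\ast\theta)$ on $\Omega^1(\mathcal G)$; passing from $\nabla$ to $\nabla+\mathcal T$ changes the induced connection on $1$-forms by $-\mathcal T$ and changes $\nabla^{[2]}$ by the restriction $\mathcal T^{[2]}$ of $\mathcal T\times\mathcal T$, and the extra terms assemble exactly into $\bar m{}^\ast\circ\mathcal T - \mathcal T^{[2]}\circ\bar m{}^\ast = \bar\delta\mathcal T$.

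Granting this relation, parts (2) and (3) are immediate. Any two $s$-projectable connections differ by an $s$-projectable $(2,1)$-tensor $\mathcal T$ (their $(s,t)$-projections make $\mathcal T$ a genuine $0$-cochain), so $\mathsf{At}_{\nabla'} - \mathsf{At}_\nabla = -\bar\delta\mathcal T$ is a coboundary and $[\mathsf{At}_\nabla]$ is independent of $\nabla$; this is (2). For (3), observe that Proposition \ref{prop:mult_conn_div} together with Lemma \ref{lem:conn,F-rel}.(2) says precisely that $\nabla$ is multiplicative iff the combination $\bar m{}^\ast\circ\nabla - \nabla^{[2]}\circ\bar m{}^\ast$, i.e. $\mathsf{At}_\nabla$, vanishes. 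Hence if a multiplicative connection exists its class vanishes, and by (2) so does $[\mathsf{At}_\nabla]$ for every $s$-projectable $\nabla$; conversely, if $[\mathsf{At}_\nabla] = 0$, write $\mathsf{At}_\nabla = \bar\delta U$ with $U$ a $0$-cochain and set $\nabla' := \nabla + U$, so that $\mathsf{At}_{\nabla'} = \mathsf{At}_\nabla - \bar\delta U = 0$ and $\nabla'$ is multiplicative.

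The real content is part (1), the cocycle identity $\bar\delta\mathsf{At}_\nabla = 0$. I would prove this by a direct computation from the explicit formula \eqref{eq:delta_bar} with $k=1$, evaluated on $(v_1^\otimes, v_2^\otimes, v_3^\otimes) \in T^{0,2}\mathcal G^{[3]}$. Substituting $U = \mathsf{At}_\nabla$, the three groups of terms are governed by the fundamental identity of the division map on triples with a common source, $\bar m(\bar m(g,h),\bar m(k,h)) = \bar m(g,k)$, and by its first and second derivatives; the connection contributions organize themselves through the compatibility already encoded in $\mathsf{At}_\nabla$ and through the fact that $\nabla^{[2]}$ is by construction the restriction of $\nabla\times\nabla$. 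Conceptually, the cleanest explanation for the vanishing is that $\mathsf{At}_\nabla$ is ``$\bar\delta$ applied to $\nabla$'' once one formally allows a connection as a degree-$0$ input: the Leibniz terms cancel precisely because $\bar m{}^\ast$ is an algebra homomorphism, so $\mathsf{At}_\nabla$ is genuinely $C^\infty$-linear, and the universal identity $\bar\delta^2 = 0$—which depends only on the groupoid relations, not on $C^\infty$-linearity of the argument—then forces $\bar\delta\mathsf{At}_\nabla = \bar\delta\bar\delta\nabla = 0$.

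I expect the main obstacle to be exactly this last computation: tracking the several copies of tangent vectors, the placements of $d\bar m$ and $d\bar m{}^{\otimes 2}$, and the curvature-type terms produced by differentiating $\bar m$ twice, then checking that everything cancels in pairs. The bookkeeping is routine in spirit but error-prone; and promoting the clean ``formal $\bar\delta^2 = 0$'' heuristic into a rigorous argument would first require setting up an affine extension of $\bar C{}^\bullet_{\mathrm{def}}(\mathcal G, T^{2,0})$ in which connections are admitted as $0$-cochains, which is why I would fall back on the explicit calculation for the actual proof.
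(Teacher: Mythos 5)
Your treatment of parts (2) and (3) is essentially the paper's own argument: the affine relation $\mathsf{At}_{\nabla+\mathcal T}=\mathsf{At}_\nabla-\bar\delta\mathcal T$ (in the paper's normalization, $\bar\delta(\nabla-\nabla')=\mathsf{At}_\nabla-\mathsf{At}_{\nabla'}$) is exactly what Appendix \ref{app:B} establishes, using the identity $(\nabla-\nabla')\times(\nabla-\nabla')=\nabla\times\nabla-\nabla'\times\nabla'$ to get $(\nabla-\nabla')^{[2]}=\nabla^{[2]}-\nabla'^{[2]}$ and then regrouping, together with Remark \ref{rem:delta_2,1_tensors}. Your identification of ``$\nabla$ multiplicative $\Leftrightarrow\mathsf{At}_\nabla=0$'' via Proposition \ref{prop:mult_conn_div} and Lemma \ref{lem:conn,F-rel}.(2) is also correct. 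So once (1) is in place, your (2) and (3) go through.

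The genuine gap is part (1), which you explicitly defer to an unexecuted ``direct computation.'' You have correctly isolated the structural input --- the identity $\bar m\circ(\mathfrak F_{1,3}\times\mathfrak F_{2,3})=\mathfrak F_{1,2}$ on $\mathcal G^{[3]}$, where $\mathfrak F_{i,j}=\bar m\circ(\mathrm{pr}_i\times\mathrm{pr}_j)$ --- but the cancellation you anticipate ``in pairs,'' with ``curvature-type terms from differentiating $\bar m$ twice,'' is precisely the part that is error-prone if attacked head-on, and the paper does not do it that way. The actual mechanism is a dualization argument: view $\bar\delta\mathsf{At}_\nabla$ as a module homomorphism $\mathcal S_\nabla:\Omega^1(\mathcal G)\to\mathfrak T^{0,2}(\mathcal G^{[3]})$ covering $\mathfrak F_{1,2}^\ast$, and split it as $\mathcal S_\nabla=\mathfrak U-\mathfrak V\circ\bar m{}^\ast$ with $\mathfrak U=(\mathfrak F_{1,3}\times\mathfrak F_{2,3})^\ast\circ\nabla^{[2]}\circ\bar m{}^\ast-\nabla^{[3]}\circ\mathfrak F_{1,2}^\ast$. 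Since $\mathfrak V$ is a module homomorphism out of $\Omega^1(\mathcal G^{[2]})$ covering $(\mathfrak F_{1,3}\times\mathfrak F_{2,3})^\ast$, it is determined by its values on the generators $\mathrm{pr}_1^\ast\theta,\mathrm{pr}_2^\ast\theta$; computing these from the definition of $\mathsf{At}_\nabla$ identifies $\mathfrak V$ as $(\mathfrak F_{1,3}\times\mathfrak F_{2,3})^\ast\circ\nabla^{[2]}-\nabla^{[3]}\circ(\mathfrak F_{1,3}\times\mathfrak F_{2,3})^\ast$, whence $\mathfrak V\circ\bar m{}^\ast=\mathfrak U$ by the identity above. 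Note that no second-derivative terms of $\bar m$ ever need to be confronted, because every object in sight is a difference of the form $F^\ast\circ\nabla-\nabla'\circ F^\ast$, hence already tensorial. Your ``formal $\bar\delta^2=0$'' heuristic is attractive but, as you concede, not a proof without constructing the affine extension of the complex; as written, part (1) remains unproved in your proposal.
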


\begin{proof}
We couldn't find any simple proof in terms of $T^\nabla, Z^\nabla$. We propose a direct proof in Appendix \ref{app:B}.
\end{proof}

\section{IM Connections and the Lie Theory of Multiplicative Connections}\label{Sec:IM_Conn}

In this section we introduce the infinitesimal analogues of multiplicative connections, that we call \emph{infinitesimally multiplicative} (IM) connections. Before giving the definition, it is useful to review from (\cite{BD2019}) the definition of IM tensor. We warn the reader that our presentation slightly deviates from Bursztyn-Drummond one. 
We will only need skew-symmetric $(p, 1)$-tensors, i.e. vector valued $p$-forms.

Let $A \Rightarrow M$ be a Lie algebroid. 

\begin{definition}[{\cite{BD2019}, see also \cite{BD2013}}]\label{def:IM_tensor}
An \emph{IM vector valued $p$-form} on $A$  is a triple $(\mathcal D, l, \mathcal T^M)$
consisting of
\begin{enumerate}
\item a first order differential operator $\mathcal D : \Gamma (A) \to \Omega^p (M, A)$,
\item a vector bundle map $l : A \to \wedge^{p-1}T^\ast M \otimes A$,
\item a vector valued $p$-form $\mathcal T^M \in \Omega^p (M, TM)$,
\end{enumerate}
such that
\begin{equation}\label{eq:calD}
\mathcal D(fa) = f \mathcal D(a) + df \wedge l(a) - \left\langle df, \mathcal T^M \right\rangle \otimes a, \quad f \in C^\infty (M), \quad a \in \Gamma (A),
\end{equation}
and additionally satisfying the following \emph{IM equations}:
\begin{align}
\mathcal D([a,b]_A) & = \mathcal L_a \mathcal D(b) - \mathcal L_b \mathcal D(a), \label{eq:IM_tensor_1}\\
l ([a,b]_A) & = \mathcal L_a l(b) - \iota_{\rho_A(b)} \mathcal D(a), \label{eq:IM_tensor_2}\\
 \mathcal L_{\rho_A(a)} \mathcal T^M& = \rho_A \circ \mathcal D(a), \label{eq:IM_tensor_3}\\
\iota_{\rho_A(a)}l(b) & = - \iota_{\rho_A(b)}l(a), \label{eq:IM_tensor_4}\\
\iota_{\rho_A (a)} \mathcal T^M& = \rho_A \circ  l(a) \label{eq:IM_tensor_5}
\end{align}
for all $a, b \in \Gamma (A)$. 
\end{definition}

Formulas (\ref{eq:calD}) and the IM equations (\ref{eq:IM_tensor_1})--(\ref{eq:IM_tensor_5}) require some explanations. First of all, the bracket $\langle -, -\rangle$ in the last term of the rhs of (\ref{eq:calD}) denotes the contraction of $df$ with the vector component of $\mathcal T^M$. Next by $\mathcal L_a \mathcal D(b)$ (and likewise $\mathcal L_b \mathcal D(a)$ and $\mathcal L_a l(b)$) in (\ref{eq:IM_tensor_1}) we mean the \emph{Lie derivative of the $A$-valued $p$-form $\mathcal D(b)$ along the derivation $[a,-]_A$ of $A$}, i.e. the $A$-valued $p$-form given by
\begin{equation}\label{eq:Lie_aD(b)}
\begin{aligned}
& \mathcal L_a \mathcal D(b) (X_1, \ldots, X_p) \\
& = \left[a, \mathcal D(b)(X_1, \ldots, X_p)\right]_A - \sum_{i = 1}^p \mathcal D(b) \left(X_1, \ldots, [\rho_A(a), X_i], \ldots, X_p \right), \quad X_i \in \mathfrak X (M).
\end{aligned}
\end{equation}

\begin{remark}\label{rem:relations}
Formulas (\ref{eq:IM_tensor_1})--(\ref{eq:IM_tensor_5}) specialize to the case of vector valued forms the Formulas (IM1)--(IM4), (IM6) in \cite[Theorem 1.1]{BD2019}, respectively. Formula (IM5) therein is empty in our case. Not all the identities (\ref{eq:IM_tensor_1})--(\ref{eq:IM_tensor_5}) are independent. For instance (\ref{eq:IM_tensor_1}) and (\ref{eq:IM_tensor_2}) together imply (\ref{eq:IM_tensor_3}) (just use (\ref{eq:IM_tensor_1}) and (\ref{eq:IM_tensor_2}) to compute $\mathcal D([a,fb])$, with $f \in C^\infty (M)$, to check it). In its turn (\ref{eq:IM_tensor_3}) implies (\ref{eq:IM_tensor_5}) (compute $\langle \mathcal L_{\rho_A(fa)} \mathcal T, \theta \rangle$).
\end{remark}

The definition of IM vector valued form is motivated by the following 

\begin{theorem}[\cite{BD2019}]\label{theor:Lie_mult_forms}
Let $\mathcal G \rightrightarrows M$ be a Lie groupoid, let $A \Rightarrow M$ be its Lie algebroid, and let $\mathcal T$ be a multiplicative vector valued $p$-form on $\mathcal G$. Then $\mathcal T$ determines an IM vector valued $p$-form $(\mathcal D, l, \mathcal T^M)$ on $M$ via
\begin{equation*}
\begin{aligned}
\mathcal D(a) & = P(\mathcal L_{\overrightarrow a} \mathcal T), \\
l(a) & = P(\iota_{\overrightarrow a} \mathcal T), \\
\mathcal T^M (v_1, \ldots, v_p) & = ds \left( \mathcal T(v_1, \ldots, v_p ) \right),
\end{aligned}
\end{equation*}
for all $a \in \Gamma (A)$, and all $v_1, \ldots, v_p \in TM$, where $\overrightarrow a$ is the right invariant vector field on $\mathcal G$ corresponding to $a$, and $P : \Omega^p(\mathcal G, T\mathcal G) \to \Omega^p(M, A)$ is the projection defined by $P(\mathcal S) = \operatorname{pr}_A \circ \, u^\ast \mathcal S$, (with $\operatorname{pr}_A : T\mathcal G|_M \to A$ the canonical projection with kernel $TM$). If $\mathcal G$ is source $1$-connected, then the assignment $\mathcal T \mapsto (\mathcal D, l, \mathcal T^M)$ is a bijection. 
\end{theorem}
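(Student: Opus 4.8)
The plan is to treat the two assertions separately: first that the formulas define genuine IM data satisfying (\ref{eq:calD})--(\ref{eq:IM_tensor_5}) (differentiation), and then the bijectivity in the source $1$-connected case (integration).

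\emph{Differentiation.} I would begin by recording the two structural properties of the right-invariant vector field $\overrightarrow a$ attached to $a \in \Gamma(A)$: it is tangent to the $s$-fibres (so $ds \circ \overrightarrow a = 0$) and it is $t$-related to $\rho_A(a)$, while $u^\ast \overrightarrow a = a$ under $A \subseteq T\mathcal G|_M$. Since $\mathcal T$ is in particular $(s,t)$-projectable with projection $\mathcal T^M$, Lie-differentiating and contracting $\mathcal T$ along $\overrightarrow a$ and then applying $P = \operatorname{pr}_A \circ u^\ast$ produces honest $A$-valued forms on $M$, so $\mathcal D$ and $l$ are well defined. The Leibniz rule (\ref{eq:calD}) follows from $\overrightarrow{fa} = (t^\ast f)\,\overrightarrow a$ together with the identity $\mathcal L_{hX}\mathcal T = h\,\mathcal L_X\mathcal T + dh \wedge \iota_X\mathcal T - \langle dh, \mathcal T\rangle \otimes X$ for a $T\mathcal G$-valued $p$-form $\mathcal T$ (where $\langle dh,\mathcal T\rangle$ contracts $dh$ into the vector part of $\mathcal T$): applying $P$, using $t \circ u = \id_M$ and the $t$-relatedness of $\mathcal T$ with $\mathcal T^M$ turns $\langle d(t^\ast f), \mathcal T\rangle$ into $\langle df, \mathcal T^M\rangle$. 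The IM equations are then pure Cartan calculus combined with the fact that $a \mapsto \overrightarrow a$ intertwines $[-,-]_A$ with the commutator of vector fields (up to the usual sign), $[\overrightarrow a, \overrightarrow b] = -\overrightarrow{[a,b]_A}$: equation (\ref{eq:IM_tensor_1}) comes from $\mathcal L_{[\overrightarrow a, \overrightarrow b]} = [\mathcal L_{\overrightarrow a}, \mathcal L_{\overrightarrow b}]$, equation (\ref{eq:IM_tensor_2}) from $\iota_{[\overrightarrow a, \overrightarrow b]} = [\mathcal L_{\overrightarrow a}, \iota_{\overrightarrow b}]$, and (\ref{eq:IM_tensor_4}) from the anticommutativity $\iota_{\overrightarrow a}\iota_{\overrightarrow b} = -\iota_{\overrightarrow b}\iota_{\overrightarrow a}$, all projected by $P$. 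The only delicate bookkeeping is that the $P$-projection of $\mathcal L_{\overrightarrow a}$ reproduces exactly the $A$-Lie derivative (\ref{eq:Lie_aD(b)}); one checks this against the two structural properties of $\overrightarrow a$. Equations (\ref{eq:IM_tensor_3}) and (\ref{eq:IM_tensor_5}) then follow from the $t$-relatedness of $\overrightarrow a$ with $\rho_A(a)$ and of $\mathcal T$ with $\mathcal T^M$, and are in any case consequences of the others by Remark \ref{rem:relations}.

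\emph{Bijectivity.} For injectivity I would note that the difference of two multiplicative tensors inducing the same triple is multiplicative with vanishing $(\mathcal D, l, \mathcal T^M)$, so it suffices to show such a tensor is zero when $\mathcal G$ is source connected. Vanishing of $\mathcal D(a), l(a)$ for all $a$ forces (via the computations above) $\mathcal L_{\overrightarrow a}\mathcal T$ and $\iota_{\overrightarrow a}\mathcal T$ to project trivially along the units; promoting this from units to all of $\mathcal G$ through the $\bar m$-characterization of multiplicativity (Remark \ref{rem:delta_2,1_tensors}) shows $\mathcal T$ is determined by its infinitesimal data, which propagates the vanishing along the connected $s$-fibres. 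For surjectivity, given an IM triple I would reconstruct $\mathcal T$ by reversing the procedure: prescribe it along the units via $\mathcal T^M$ and the unit data encoded by $l$, then propagate it into each source fibre by integrating the flows of right-invariant vector fields, the IM equations guaranteeing that the infinitesimal propagation law is consistent. Source-connectedness gives existence of the propagation in each fibre, and source-\emph{simple}-connectedness guarantees path-independence (trivial holonomy around loops in an $s$-fibre), hence global well-definedness and smoothness of $\mathcal T$; that the propagated object is multiplicative and differentiates back to $(\mathcal D, l, \mathcal T^M)$ is then forced, the latter by the injectivity already established.

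The step I expect to be the main obstacle is surjectivity. The cleanest rigorous route is to repackage a multiplicative vector-valued $p$-form, viewed as the bundle map $\wedge^p T\mathcal G \to T\mathcal G$, and its IM data as integration/differentiation of a morphism of Lie algebroids, so that the whole correspondence becomes an instance of Lie's second theorem for source $1$-connected groupoids (see \cite{mackenzie}). Identifying the precise morphism that encodes multiplicativity for the tensor-valued (non-linear over $\wedge^p$) target, and then checking that the integrated groupoid morphism is genuinely a \emph{tensor} rather than a more general map--together with smoothness transverse to the fibres--is where source $1$-connectedness and the IM equations (\ref{eq:IM_tensor_1})--(\ref{eq:IM_tensor_5}) must be used most carefully; this is exactly the content supplied by \cite{BD2019, BD2013}.
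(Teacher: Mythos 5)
The paper does not prove this theorem: it is quoted verbatim from \cite{BD2019} (with \cite{BD2013} for the skew-symmetric $(p,1)$ case), so there is no in-paper argument to compare yours against. Judged on its own, your sketch has the right architecture --- differentiate by restricting $\mathcal L_{\overrightarrow a}\mathcal T$ and $\iota_{\overrightarrow a}\mathcal T$ to units, integrate by Lie's second theorem for a suitable reformulation of multiplicativity as a morphism --- and this is indeed the strategy of \cite{BD2019}. But the step you dismiss as ``delicate bookkeeping'' is actually the crux, and as written it does not go through by ``pure Cartan calculus.'' The issue is that the interchange identities you implicitly use, namely $P(\mathcal L_{\overrightarrow a}\mathcal S) = \mathcal L_a P(\mathcal S)$ and $P(\iota_{\overrightarrow a}\mathcal S) = \iota_{\rho_A(a)} P(\mathcal S)$ for $\mathcal S = \mathcal L_{\overrightarrow b}\mathcal T$ or $\iota_{\overrightarrow b}\mathcal T$, are false for a general $(s,t)$-projectable tensor. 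Concretely, $P$ restricts to the unit section, but the flow of $\overrightarrow a$ (right translation by a bisection through $1$) does not preserve the units, and at a unit $1_x$ the vector $\overrightarrow a|_{1_x} = a_x \in A_x = \ker d_{1_x}s$ is a genuinely different element of $T_{1_x}\mathcal G$ from $du(\rho_A(a)_x)$, which is what $\iota_{\rho_A(a)}P(\mathcal S)$ actually contracts with. Reconciling the two requires precisely the $m$-relatedness of $\mathcal T$ (not just $(s,t)$-projectability), which is what forces $\mathcal T$ to be determined by its unit-section data and makes the IM equations come out; your sketch never identifies where multiplicativity enters the derivation of \eqref{eq:IM_tensor_1}--\eqref{eq:IM_tensor_5}, which is a genuine gap rather than a presentational one.

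On the bijectivity half, your injectivity argument is essentially right (a multiplicative tensor with vanishing infinitesimal components vanishes on a source-connected groupoid, by propagating along $s$-fibres via right translations), but the surjectivity paragraph is a declaration of intent rather than a proof: you correctly locate the difficulty (integrating the IM data to a globally defined \emph{tensor}, with smoothness transverse to the fibres and path-independence from source simple-connectedness) and then defer it wholesale to \cite{BD2019, BD2013}. Since the theorem is being cited rather than reproved in this paper, that deference is defensible as a reading of the literature, but it means your text should be regarded as a commentary on the known proof rather than an independent one.
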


\begin{remark}
Let $A \to M$ be a vector bundle, not necessarily a Lie algebroid. The vector bundle $A \to M$ can be seen as the source $1$-connected Lie groupoid with source and target being both the projection $A \to M$, and multiplication being the fiber addition. The Lie algebroid of this groupoid is $A$ itself with trivial bracket and trivial anchor. A \emph{linear} vector valued form on the vector bundle $A$ is a multiplicative vector valued form $\mathcal T \in \Omega^\bullet (A, TA)$ wrt the latter Lie groupoid structure \cite{BD2019}. The term \emph{linear} emphasizes that $\mathcal T$ is compatible with the vector bundle structure. Finally, when specialized to this case, Theorem \ref{theor:Lie_mult_forms} establishes a bijection between linear vector valued forms on $A$ and triples $(\mathcal D, l, \mathcal T^M)$ as in Definition \ref{def:IM_tensor} satisfying (\ref{eq:calD}), but not-necessarily satisfying (\ref{eq:IM_tensor_1})--(\ref{eq:IM_tensor_5}). For this reason, the latter triples will be called the \emph{components of a linear vector valued form}. We stress that it is not really necessary to use Theorem \ref{theor:Lie_mult_forms} to show that linear vector valued forms correspond bijectively to their components, and the latter bijection (that can be proved, e.g., in local coordinates) is actually a step towards the proof of Theorem \ref{theor:Lie_mult_forms} (see \cite{BD2019}).
\end{remark}

We now turn to connections. Let $A \Rightarrow M$ be a Lie algebroid again.

\begin{definition}\label{def:IM_conn}
An \emph{IM connection} in the tangent bundle $TA \to A$ of $A$ is a $4$-tuple $(\mathcal F, \nabla^A, \nabla^M, l)$ consisting of \begin{enumerate}
\item a second order differential operator $\mathcal F : \Gamma (A) \to \Gamma (T^{2,0} \otimes A)$,
\item a linear connection $\nabla^A$ in $A$,
\item a linear connection $\nabla^M$ in $M$,
\item a vector bundle map $l : E \to T^\ast M \otimes A$,
\end{enumerate}
such that 
\begin{equation}\label{eq:calF}
\mathcal F (fa) = f \mathcal F (a) + \nabla^M df \otimes a + df \odot \nabla^A a + df \otimes l (a), \quad f \in C^\infty (M), \quad a \in \Gamma (A),
\end{equation}
 and additionally satisfying the following identities
\begin{align}
[a,b]_A & = \nabla^A_{\rho_A(a)}b - \nabla^A_{\rho_A(b)}a- \iota_{\rho_A(b)}l(a) \label{eq:IM_conn_1}\\
\mathcal L_a \nabla^A (X, b) & = \mathcal F (a) ( X , \rho_A(b)) \label{eq:IM_conn_2} \\
\mathcal F ([a,b]_A) & = \mathcal L_a \mathcal F (b) - \mathcal L_b \mathcal F (a) \label{eq:IM_conn_3} \\
l ([a,b]_A) & = \mathcal L_a l(b) - \iota_{\rho_A(b)} \mathcal D(a), \label{eq:IM_conn_4}
\end{align}
for all $a,b \in \Gamma (A)$, $X \in \mathfrak X (M)$. The \emph{IM torsion} of $(\mathcal F, \nabla^A, \nabla^M, l)$ is the triple $(\mathcal D, l, T^M)$, where $T^M$ is the torsion of $\nabla^M$, and $\mathcal D : \Gamma (A) \to \Omega^2 (M, A)$ is given by
\begin{equation}\label{eq:IM_torsion}
\mathcal D (a)(X_1, X_2) = \mathcal F(a) (X_1, X_2) - \mathcal F (a) (X_2, X_1), \quad X_1, X_2 \in \mathfrak X (M).
\end{equation}
\end{definition}

Some explanations are necessary. In (\ref{eq:calF}) $df \odot \nabla^A a \in \Gamma (T^{2,0} M \otimes A)$ is given by
\[
df \odot \nabla^A a \ (X_1, X_2) = X_1(f) \nabla^A_{X_2} a + X_2(f) \nabla^A_{X_1} a, \quad X_1, X_2 \in \mathfrak X (M).
\]
Moreover, the Lie derivative $\mathcal L_a \mathcal F(b)$ of the $A$-valued $(2,0)$-tensor $\mathcal F(b)$ is given by the same formula (\ref{eq:Lie_aD(b)}) as for $\mathcal D (b)$ (but with $p = 2$), and the Lie derivative $\mathcal L_a \nabla^A$ of the connection $\nabla^A$ is the $\operatorname{End} A$-valued $1$-form given by the similar formula:
\[
\mathcal L_a \nabla^A (X, b) = \left[ a, \nabla^A_X b \right]_A - \nabla^A_{[\rho_A(a), X]} b - \nabla^A_X [a, b]_A, \quad X \in \mathfrak X (M), \quad b \in \Gamma (A).
\]

Definition \ref{def:IM_conn} is motivated by Theorems \ref{theor:IM_spray} and \ref{theor:Lie_theory} below.

\begin{remark}
Various identities follow from (\ref{eq:IM_conn_1})--(\ref{eq:IM_conn_3}). Namely, from (\ref{eq:IM_conn_1}) we have
\begin{equation}\label{stanco}
\iota_{\rho_A (a)}l(b) = - \iota_{\rho_A(b)}l (a).
\end{equation}
Additionally, computing $\mathcal L_{fa} \nabla^A (X, b)$ for all $f \in C^\infty (M)$ via (\ref{eq:IM_conn_1}), and (\ref{eq:IM_conn_2}), we find
\begin{equation}\label{eq:IM_conn_-}
\rho_A\left( \nabla_X^A a \right) = \nabla_X^M \rho_A(a).
\end{equation}
Similarly, computing $\mathcal F ([a, fb]_A)$ for all $f$, via (\ref{eq:IM_conn_3}), and using (\ref{eq:IM_conn_2}), we find
\begin{equation}\label{eq:IM_conn_+}
\mathcal L_{\rho_A (a)} \nabla^M =  \rho_A \circ \mathcal F (a).
\end{equation}
Finally, from (\ref{eq:IM_conn_3}), we see easily that (\ref{eq:IM_tensor_1}) holds. We stress that, in view of Remark \ref{rem:relations}, the latter identity, together with (\ref{eq:IM_conn_4}) and (\ref{stanco}), shows that the IM torsion of an IM connection is an IM vector valued $2$-form.
\end{remark}

\begin{remark} Similarly as for vector valued forms, there is a notion of \emph{compatibility} between a connection $\nabla$ in the tangent bundle $TA \to A$ of a vector bundle $A \to M$ (not necessarily a Lie algebroid) and the vector bundle structure. In order to avoid confusing terminology we call such compatibility \emph{fiber-wise linearity} (instead of simply \emph{linearity} as for vector valued forms). Namely, a connection $\nabla$ in $TA \to A$ is \emph{fiber-wise linear} if it is multiplicative wrt the Lie groupoid structure on $A$ with source and target the projection $A \to M$ and multiplication the fiber-wise addition. In this simple case, it is easy to see, e.g. in local coordinates, that the Formulas (\ref{eq:sec_comp_lin_mult}) establish a well-defined bijection between fiber-wise linear connections $\nabla$ in $TA \to A$ and $4$-tuples $(\mathcal F, \nabla^A, \nabla^M, l)$ as in Definition \ref{def:IM_conn} satisfying (\ref{eq:calF}) but not-necessarily satisfying (\ref{eq:IM_conn_1})--(\ref{eq:IM_conn_4}). For this reason, the latter $4$-tuples will be called the \emph{components of a fiber-wise linear connection}. Notice that, in the present case, for any section $a$ of $A$, the right invariant vector field $\overrightarrow a$ agrees with the vertical lift of $A$ and we also denote it by $a^\uparrow$.
 
 If $(\mathcal F, \nabla^A, \nabla^M, l)$ are the components of a fiber-wise linear connection $\nabla$, $\mathcal D$ is given by (\ref{eq:IM_torsion}) and $T^M$ is the torsion of $\nabla^M$, then $(\mathcal D, l, T^M)$ are the components of a linear vector valued $2$-form on $A$, and the latter agrees with the torsion of $\nabla$. We will often make use of this remark in the following without explicit comments. 
 \end{remark}
 
 \begin{theorem}\label{theor:IM_spray}
 Let $A \Rightarrow M$ be a Lie algebroid, let $TA \Rightarrow TM$ be its tangent prolongation Lie algebroid, and let $\nabla$ be a connection in $TA \to A$. Then the following conditions are equivalent 
 \begin{enumerate}
 \item $\nabla$ is fiber-wise linear and its components form an IM connection;
 \item the torsion $T^\nabla$ is a linear vector valued $2$-form and its components form an IM vector valued $2$-form, and, additionally, the geodesic spray $Z^\nabla$ is an IM vector field wrt the Lie algebroid $TA \Rightarrow TM$ \cite{MX1998}.
 \end{enumerate}
 \end{theorem}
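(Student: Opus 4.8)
The plan is to reduce the statement to a purely graded-geometric assertion, where the equivalence becomes a graded incarnation of the classical decomposition in Lemma \ref{lem:conn,F-rel}.(6), and then to translate the graded conditions back to the infinitesimal data. This mirrors, at the Lie-algebroid level, the way Proposition \ref{prop:char_mult_conn} follows from Lemma \ref{lem:conn,F-rel}, and it is the strategy I would carry out in detail in Appendix \ref{app:A}.

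First I would set up the dictionary between Lie algebroids and $Q$-manifolds. The Lie algebroid $A \Rightarrow M$ is equivalent to the degree $1$ non-negatively graded manifold $A[1]$ equipped with its homological vector field $Q$ (the Chevalley--Eilenberg differential), and the tangent prolongation $TA \Rightarrow TM$ corresponds to the tangent-lifted $Q$-manifold $T(A[1])$ with the tangent prolongation $TQ$ of $Q$. Under this correspondence IM structures on $A$ go over to $Q$-invariant structures on $A[1]$. Concretely, I would show that a fiber-wise linear connection $\nabla$ in $TA \to A$ whose components $(\mathcal F, \nabla^A, \nabla^M, l)$ form an IM connection (condition (1)) is the same datum as a graded connection $\bar\nabla$ in $T(A[1]) \to A[1]$ which is compatible with $Q$, in the sense that $\mathcal L_Q \bar\nabla = 0$ (equivalently, $Q$ and its tangent prolongation $TQ$ are $\bar\nabla$-self-related). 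Matching the four components and the defining identities (\ref{eq:calF})--(\ref{eq:IM_conn_4}) of Definition \ref{def:IM_conn} against $\bar\nabla$ and its $Q$-compatibility is the technical core, and I expect this identification to be the main obstacle: one must in particular check that the torsion and geodesic spray of $\bar\nabla$ correspond under the dictionary to $T^\nabla$ and $Z^\nabla$, and that $TQ$ reproduces exactly the Lie algebroid structure on $TA \Rightarrow TM$ against which IM-ness of $Z^\nabla$ is tested.

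Granting the dictionary, the equivalence (1) $\Leftrightarrow$ (2) is transparent. A graded connection is completely determined by its torsion $T^{\bar\nabla}$ and its geodesic spray $Z^{\bar\nabla}$, just as in the ungraded case recalled in Section \ref{Sec:Lin_Conn}, so applying the graded analogue of Lemma \ref{lem:conn,F-rel}.(6) to the self-relatedness of $\bar\nabla$ along $Q$ shows that $\bar\nabla$ is $Q$-compatible if and only if $T^{\bar\nabla}$ is $Q$-invariant and $Z^{\bar\nabla}$ is $TQ$-self-related. It then remains to re-translate these two graded conditions: $Q$-invariance of $T^{\bar\nabla}$ is precisely the statement that $T^\nabla$ is a linear vector valued $2$-form whose components form an IM vector valued $2$-form (the connection-torsion instance of the IM-tensor dictionary, cf. Theorem \ref{theor:Lie_mult_forms}), whereas $TQ$-self-relatedness of $Z^{\bar\nabla}$ is precisely the assertion that $Z^\nabla$ is an IM vector field with respect to the tangent prolongation Lie algebroid $TA \Rightarrow TM$.

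A direct, coordinate-based verification of (1) $\Leftrightarrow$ (2) is also available in principle -- spelling out both sides in terms of $(\mathcal F, \nabla^A, \nabla^M, l)$ and matching the IM equations against the defining identities of an IM vector field on $TA \Rightarrow TM$ -- but it is considerably more laborious and offers little conceptual insight, which is why I would prefer the graded route, where the result reduces to the standard torsion-plus-spray decomposition carried out in the graded category.
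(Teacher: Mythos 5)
Your proposal follows essentially the same route as the paper's own proof in Appendix \ref{app:A}: the paper establishes the dictionary you describe as Theorems \ref{theor:Q-forms} and \ref{theor:Q-conn} (IM connection $\Leftrightarrow$ graded connection $\nabla^{[1]}$ with $\mathcal L_{d_A}\nabla^{[1]}=0$), defines the geodesic spray of a graded connection via the commutator identity (\ref{eq:gspray, alg}), and uses the resulting formula (\ref{eq:[Q,G]}) as exactly the graded torsion-plus-spray decomposition you invoke, together with the identification $(Z^\nabla)^{[1]}=Z^{\nabla^{[1]}}$ and $d_A^{\mathrm{tan}}$ as the homological vector field of $TA\Rightarrow TM$. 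The plan is correct and matches the paper's strategy, with the technical work you flag (matching components and checking $\mathcal L_{d_A}$-invariance against the IM equations) being precisely the content of those two auxiliary theorems.
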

 
There is a nice proof of Theorem \ref{theor:IM_spray} exploiting the graded geometry of the $Q$-manifold $A[1]$. As this proof requires a digression on connections on $Q$-manifolds, we postpone it to Appendix \ref{app:A}. Now, we state our main result.

\begin{theorem}\label{theor:Lie_theory}
 Let $\mathcal G \rightrightarrows M$ be a Lie groupoid, let $A \Rightarrow M$ be its Lie algebroid, and let $\nabla$ be a multiplicative connection in $ T\mathcal G  \to G$. Then $\nabla$ determines an IM connection $(\mathcal F, \nabla^A, \nabla^M, l)$ in $TA \to A$ via:
 \begin{equation}\label{eq:sec_comp_lin_mult}
 \begin{aligned}
 \mathcal F (a) & = P(\mathcal L_{\overrightarrow a} \nabla) \\
 \nabla^A a & = P(\nabla \overrightarrow a) \\
 \nabla^M & \text{is the $s$-projection of $\nabla$ (equivalently, its restriction to units)} \\
 l(a) & = P (\iota_{\overrightarrow a} T^\nabla)
 \end{aligned}
 \end{equation}
If $\mathcal G$ is source simply connected, then the assignment $\nabla \mapsto (\mathcal F, \nabla^A, \nabla^M, l)$ establishes a bijection between multiplicative connections in $ T\mathcal G  \to \mathcal G$ and IM connections in $TA \to A$ (see Theorem \ref{theor:Lie_mult_forms} for a definition of $P$).
 \end{theorem}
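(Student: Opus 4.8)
The plan is to reduce the statement to the two Lie-theoretic correspondences already at our disposal: the Bursztyn--Drummond correspondence for vector valued forms (Theorem \ref{theor:Lie_mult_forms}) applied to the torsion, and the Lie theory of multiplicative vector fields \cite{MX1998} applied to the geodesic spray. The organising principle is that on both sides a connection is equivalent to the pair consisting of its torsion and its geodesic spray. Indeed Proposition \ref{prop:char_mult_conn}.(5) says that $\nabla$ is multiplicative iff $T^\nabla$ is a multiplicative vector valued $2$-form and $Z^\nabla$ is a multiplicative vector field on the tangent groupoid $T\mathcal G \rightrightarrows TM$, while Theorem \ref{theor:IM_spray}.(2) says that a connection in $TA$ is IM iff its torsion is an IM vector valued $2$-form and its spray is an IM vector field on $TA \Rightarrow TM$. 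Since any vector valued $2$-form and any spray combine into a unique connection (the spray fixing the symmetric part, the $2$-form fixing the skew part), this exhibits multiplicative connections as the ``product'' of multiplicative torsions and multiplicative sprays, and likewise IM connections as the product of IM torsions and IM sprays.

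First I would treat each factor under differentiation. For the torsion, Theorem \ref{theor:Lie_mult_forms} assigns to the multiplicative $T^\nabla$ an IM vector valued $2$-form $(\mathcal D, l, T^M)$ with $l(a) = P(\iota_{\overrightarrow a} T^\nabla)$ and $\mathcal D(a) = P(\mathcal L_{\overrightarrow a} T^\nabla)$; this already accounts for the component $l$ in (\ref{eq:sec_comp_lin_mult}) and, through (\ref{eq:IM_torsion}), for the skew-symmetric part of $\mathcal F$. For the spray I would observe that $T\mathcal G \rightrightarrows TM$ is a Lie groupoid whose Lie algebroid is the tangent prolongation $TA \Rightarrow TM$, and differentiate the multiplicative vector field $Z^\nabla$ to an IM vector field on $TA$. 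Assembling the IM torsion with this IM vector field via Theorem \ref{theor:IM_spray} produces a fiber-wise linear connection $\nabla^{\mathrm{lin}}$ whose components form an IM connection; a short bookkeeping computation, using the definition of $P$ together with $\nabla^{\mathrm{lin}} = (\nabla^{\mathrm{lin}})^{\mathrm{sym}} + \tfrac12 T^{\nabla^{\mathrm{lin}}}$, then identifies $\mathcal F(a) = P(\mathcal L_{\overrightarrow a}\nabla)$, $\nabla^A a = P(\nabla \overrightarrow a)$, and $\nabla^M$ as the $s$-projection, matching (\ref{eq:sec_comp_lin_mult}) exactly.

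For the bijection when $\mathcal G$ is source simply connected, I would note that both correspondences are then bijective. Theorem \ref{theor:Lie_mult_forms} is a bijection by the source simple connectivity hypothesis on $\mathcal G$. For the spray I must verify that $T\mathcal G \rightrightarrows TM$ is itself source simply connected: its source map is $ds : T\mathcal G \to TM$, and the source fiber over $v \in T_xM$ is an affine subbundle of $T\mathcal G|_{s^{-1}(x)}$ over the $s$-fiber $s^{-1}(x)$ of $\mathcal G$; since the affine fibers are contractible, this source fiber is homotopy equivalent to $s^{-1}(x)$, hence simply connected. Therefore the differentiation of multiplicative vector fields on $T\mathcal G$ is also bijective, and the composite $\nabla \mapsto \nabla^{\mathrm{lin}}$ is a bijection, with inverse obtained by integrating an IM torsion and an IM spray separately and recombining them through Proposition \ref{prop:char_mult_conn}.(5).

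The hardest point is to control the geodesic-spray factor under differentiation, in two respects. First, one must check that differentiation sends a multiplicative spray to an IM \emph{spray}, i.e. that the property of being a geodesic spray --- encoded by the homogeneity (Euler) and second-order conditions on the tangent bundle --- is intertwined by the Lie functor of the tangent groupoid; this is precisely what makes the assembled $\nabla^{\mathrm{lin}}$ a genuine connection and not merely a fiber-wise linear connection carrying IM vector-field data. Second, one must reconcile the abstract differentiation of $Z^\nabla$ with the explicit formulas $\mathcal F(a) = P(\mathcal L_{\overrightarrow a}\nabla)$ and $\nabla^A a = P(\nabla\overrightarrow a)$, confirming that restriction to units followed by projection to $A$ commutes with the operations that define the spray. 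Both are compatibility statements about the interplay between the fiber-wise homogeneity of $T\mathcal G \to \mathcal G$ and the groupoid structure, and I expect them to absorb most of the technical work.
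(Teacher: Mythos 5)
Your proposal takes essentially the same route as the paper's proof: decompose a connection into its torsion and geodesic spray, differentiate the torsion via Theorem \ref{theor:Lie_mult_forms} and the spray via the Lie theory of multiplicative vector fields on the source simply connected tangent groupoid $T\mathcal G \rightrightarrows TM$, and recombine using Proposition \ref{prop:char_mult_conn}.(5) and Theorem \ref{theor:IM_spray}. You also correctly single out the genuine technical crux, namely that the integrated vector field is again a \emph{spray} (homogeneity plus the vertical-endomorphism/Euler condition), which is exactly where the paper invests its remaining effort, settling both conditions by uniqueness of integration of IM vector fields.
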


\begin{proof}
A linear connection in the tangent bundle is completely determined by its torsion and its geodesic spray. The main idea behind this proof is taking care of the two pieces of data separately. 

Let $\mathcal G \rightrightarrows M$, $A \Rightarrow M$ and $\nabla$ be as in the first part of the statement. The torsion $T^\nabla$ is a multiplicative vector valued $2$-form on $\mathcal G$. Denote by $(\mathcal D, l, \mathcal T^M)$ the corresponding IM vector valued $2$-form on $A$ and let $T^{\mathrm{lin}} \in \Omega^2 (A, TA)$ be the associated linear vector-valued $2$-form on $A$. It is easy to see that Formulas \ref{eq:sec_comp_lin_mult} define the components of a fiber-wise linear connection $\nabla^{\mathrm{lin}}$ in $TA \to A$ and that $T^{\mathrm{lin}}$ is exactly the torsion of $\nabla^{\mathrm{lin}}$.
In view of Theorem \ref{theor:IM_spray}, in order to conclude that $(\mathcal F, \nabla^A, \nabla^M, l)$ is an IM connection, it remains to show that the geodesic spray $Z^{\nabla^{\mathrm{lin}}}$ is an IM vector field wrt the Lie algebroid structure $TA \Rightarrow TM$. This follow from the following facts:
\begin{enumerate}
\item $Z^\nabla$ is a multiplicative vector field wrt the groupoid structure $ T\mathcal G  \rightrightarrows TM$ (Proposition \ref{prop:char_mult_conn});
\item $Z^{\nabla^{\mathrm{lin}}}$ is the corresponding IM vector field on the Lie algebroid $TA \Rightarrow TM$ (easy in local coordinates). 
\end{enumerate}

For the second part of the statement, assume that the Lie groupoid $\mathcal G \rightrightarrows M$ is source simply connected. In this case, the correspondence $\nabla \mapsto (\mathcal F, \nabla^A, \nabla^M, l)$ between multiplicative connections in $ T\mathcal G $ and IM connections in $TA$ can be inverted as follows. Notice preliminarily that the tangent prolongation $ T\mathcal G  \rightrightarrows TM$ is source simply connected as well. Begin with an $IM$ connection $(\mathcal F, \nabla^A, \nabla^M, l)$, and let $\nabla^{IM}$ be the associated fiber-wise linear connection in $TA \to A$. Denote by $(\mathcal D, l, T^M)$ the IM torsion of $(\mathcal F, \nabla^A, \nabla^M, l)$ and let $Z^{IM}$ be the geodesic spray of $\nabla^{IM}$. As $(\mathcal D, l, T^M)$ are exactly the components of the torsion of $\nabla^{IM}$, from Theorem \ref{theor:IM_spray} they are an IM vector valued $2$-form on $A$. Hence, from \cite[Theorem 3.19]{BD2019}, $(\mathcal D, l, T^M)$ can be integrated to a multiplicative vector valued $2$-form $T$ on $\mathcal G$. It remains to take care of the geodesic spray. From Proposition \ref{theor:IM_spray} again $Z^{IM}$ is an IM vector field with respect to the Lie algebroid structure $TA \Rightarrow TM$. Hence it can be integrated to a multiplicative vector field $Z$ on $ T\mathcal G$. If $Z$ is a spray, then it is the geodesic spray of a unique multiplicative symmetric connection $\nabla$ in $ T\mathcal G $, and $\nabla + T$ is the multiplicative connection we are looking for, i.e. $(\mathcal F, \nabla^A, \nabla^M, l)$ is the IM connection corresponding to $\nabla + T$.

So, it remains to prove that $Z$ is a spray. This means that 
\begin{enumerate}
\item $Z$ is of degree $1$ wrt the action $h^{\mathcal G}$ of $(\mathbb R, \cdot)$ on $ T\mathcal G $ given by fiber-wise scalar multiplication, i.e. $(h^{\mathcal G}_t)^\ast  Z = tZ$ for all $t \neq 0$, and
\item the vertical endomorphism $V_{\mathcal G} : T T\mathcal G  \to T T\mathcal G $ maps $Z$ to the Euler vector field $\mathsf{Eul}_{\mathcal G}$ on $ T\mathcal G $. 
\end{enumerate}

For item (1) notice that $h^{\mathcal G}$ is an action by groupoid maps, and it differentiates to the action $h^A$ by algebroid maps given by fiber-wise scalar multiplication in the fibers of $TA \to A$. As $Z^{IM}$ is a spray, we have $(h^A_t)^\ast Z^{IM} = t Z^{IM}$ for all $t \neq 0$. Hence $(h^A_t)^\ast Z^{IM} = t Z^{IM}$ is an IM vector field integrating, on one side, to $(h^{\mathcal G}_t)^\ast Z$, and, on the other side, to $t Z$. From the uniqueness of the integration, it follows that $(h^{\mathcal G}_t)^\ast Z = tZ$ for all $t \neq 0$, as desired.

For item (2) we argue as follows. First of all, it is easy to see that the vertical endomorphism $V_{\mathcal G} : T T\mathcal G  \to T T\mathcal G $ is a groupoid map differentiating to a Lie algebroid map: the vertical endomorphism $V_A : TTA \to TTA$ of $TA$. Similar considerations hold for the Euler vector field. Namely, the Euler vector field $\mathsf{Eul}_{\mathcal G}$ is a multiplicative vector field differentiating to an IM vector field $\mathsf{Eul}_A$: the Euler vector field on $TA$. As $Z$ is a multiplicative vector field (and $V_{\mathcal G}$ is a groupoid map), then $V_{\mathcal G} (Z)$ is a multiplicative vector field. The associated IM vector field is $V_A (Z^{IM}) = \mathsf{Eul}_A$. From the uniqueness of the integration, it follows that $V_{\mathcal G} (Z) = \mathsf{Eul}_{\mathcal G}$, and this concludes the proof.
 
\end{proof}

\section{Examples of Multiplicative and IM Connections}\label{Sec:Examples}

Not all Lie groupoids (resp., algebroids) support a multiplicative (resp., IM) connection. For instance, Equation (\ref{eq:IM_conn_-}) says that the anchor $\rho_A$ is parallel wrt to the connection induced by $\nabla^M, \nabla^A$ on the tensor algebra of $TM \oplus A$. It follows that $\rho_A$ has constant rank and the Lie algebroid $A$ is regular. Hence non-regular Lie groupoids (resp., algebroids) cannot support a multiplicative (resp., IM) connection. The existence of a multiplicative connection poses even more restrictions on a Lie groupoid. Namely, it follows from (\ref{eq:IM_conn_1}) that the Lie bracket $[-,-]_A$ vanishes on the kernel of the anchor $\rho_A$. Hence the isotropy Lie algebra bundle of a Lie groupoid supporting a multiplicative connection must be abelian. For instance a Lie group with non-abelian Lie algebra (resp., a non-abelian Lie algebra) cannot support a multiplicative (resp., IM) connection. Concluding, Lie groupoids with a multiplicative connections are not abundant and finding examples is not an easy task. However, non-trivial examples exist as we aim to show in this section. We begin with some simple examples of multiplicative connections on Lie groupoids.

\begin{example}[Orbifold groupoids]
Recall that a Lie groupoid $\mathcal G \rightrightarrows M$ is an \emph{\'etale groupoid} if $\dim \mathcal G = \dim M$, hence both the source and the target are local diffeomorphisms. Let $\mathcal G \rightrightarrows M$ be an \'etale groupoid, let $\nabla$ be a connection in $ T\mathcal G $, and let $\nabla^M$ be a connection in $TM$. As both $s,t$ are local diffeomorphisms, $\nabla$ is $(s,t)$-projectable with $(s,t)$-projection $\nabla^M$ iff $\nabla$ agrees with both the pull-back of $\nabla^M$ along $s$ and $t$: $\nabla = s^\ast \nabla^M = t^\ast \nabla^M$. In this case, $\nabla$ is completely determined by its $(s,t)$-projection. Now let $\nabla$ be $(s,t)$-projectable. Then $\nabla$ is multiplicative iff
\begin{equation}\label{eq:mult_etale}
m^\ast \nabla \alpha = \nabla^{(2)} m^\ast \alpha
\end{equation}
for all $\alpha \in \Omega^1 (\mathcal G)$. Actually, it is clearly enough that (\ref{eq:mult_etale}) holds for $\alpha = s^\ast \beta$ with $\beta \in \Omega^1 (M)$. So, let $\beta \in \Omega^1 (M)$, and compute
\begin{equation}\label{eq:comput_mult_etale}
\begin{aligned}
m^\ast \nabla s^\ast \beta & = m^\ast s^\ast \nabla^M \beta = \mathrm{pr}_2^\ast s^\ast \nabla^M \beta = \mathrm{pr}_2^\ast \nabla s^\ast \beta  = \nabla^{(2)}  \mathrm{pr}_2^\ast s^\ast \beta \\
& = \nabla^{(2)} m^\ast s^\ast \beta,
\end{aligned}
\end{equation}
where we used that $s \circ m = s \circ \mathrm{pr}_2 : \mathcal G^{(2)} \to \mathcal G$ and that $\nabla^{(2)}, \nabla$ are $\mathrm{pr}_2$-related.

The computation (\ref{eq:comput_mult_etale}) shows that Equation \ref{eq:mult_etale} is identically satisfied by any $(s,t)$-projectable connection in $ T\mathcal G $. We conclude that multiplicative connections in the tangent bundle of an \'etale groupoid $\mathcal G \rightrightarrows M$ are in one-to-one correspondence via pull-back with connections $\nabla^M$ in $TM$ such that 
\[
s^\ast \nabla^M = t^\ast \nabla^M.
\]
The latter condition says that $\nabla^M$ is invariant under the action of $\mathcal G$ on $M$. In particular, if $\mathcal G$ is an \emph{orbifold groupoid}, i.e. it is \'etale and proper, so that the orbispace $M / \mathcal G$ is an orbifold, then $\nabla^M$ descends to a connection in the tangent bundle of $M / \mathcal G$. We conclude that a multiplicative connection in the tangent bundle of an orbifold groupoid induces a connection in the tangent bundle of the associated orbifold in a natural way.
\end{example}

\begin{example}[Submersion groupoids]\label{ex:submersion}
Let $\pi : M \to B$ be a surjective submersion. The associated \emph{submersion groupoid} is the Lie subgroupoid $M \times_B M \rightrightarrows M$ of the pair groupoid $M \times M \rightrightarrows M$ of $M$. For this reason, abusing the notation, we will denote the structure maps of $M \times_B M$ and $M \times M$ in the same way. We will also denote by
\[
j : M \times_B M \hookrightarrow M \times M
\]
the inclusion. The manifold $(M \times_B M)^{(2)}$ of composable arrows in the submersion groupoid is diffeomorphic to $M \times_B M \times_B M$ via
\[
M \times_B M \times_B M \to (M \times_B M)^{(2)}, \quad (x,y,z) \mapsto ((x,y), (y,z)).
\]

Now, let $\nabla$ be a connection in the tangent bundle of $M \times_B M$. First of all we want to show that $\nabla$ is $(s,t)$-projectable with $(s,t)$-projection $\nabla^M$ iff the product connection $\nabla^M \times \nabla^M$ restricts to $M \times_B M$ and its restriction is $\nabla$, i.e.
\begin{equation}\label{eq:project_subm}
j^\ast \left( \nabla^M \times \nabla^M \right) \alpha = \nabla j^\ast \alpha
\end{equation}
for all $\alpha \in \Omega^1(M \times M)$. To do that first assume that $\nabla$ is $(s,t)$-projectable. It suffices to check that (\ref{eq:project_subm}) holds when $\alpha$ is of the form $s^\ast \beta$ or $t^\ast \beta$ for some $\beta \in \Omega^1 (M)$. So compute
\[
j^\ast \left(\nabla^M \times \nabla^M\right) s^\ast \beta =
j^\ast s^\ast \nabla^M \beta =\nabla j^\ast s^\ast \beta,  
\]
where we used that $\nabla^M \times \nabla^M$ and $\nabla^M$ are $s$-related. Similarly, (\ref{eq:project_subm}) holds when $\alpha = t^\ast \beta$. The fact that (\ref{eq:project_subm}) implies that $\nabla$ is $(s,t)$-projectable can be proved going back along the same computations. We conclude that when $\nabla$ is $(s,t)$-projectable, then it is completely determined by its projection $\nabla^M$.

Let $\nabla$ be $(s,t)$-projectable. Denote by 
\[
I: M \times_B M \times_B M \to M \times M \times M
\]
the natural embedding. Then it follows from (\ref{eq:project_subm}), that $\nabla^{(2)}$ and $\nabla^M \times \nabla^M \times \nabla^M$ are $I$-related. Now $\nabla$ is multiplicative iff
\begin{equation}\label{eq:mult_subm}
m^\ast \nabla \alpha = \nabla^{(2)} m^\ast \alpha
\end{equation}
holds for all $\alpha \in \Omega^1 (M \times_B M)$, and, actually, it is enough that (\ref{eq:mult_subm}) holds for $\alpha$ of either the form $j^\ast s^\ast \beta$ or $j^\ast t^\ast \beta$, with $\beta \in \Omega^1 (M)$. So, let $\beta \in \Omega^1 (M)$ and compute
\[
m^\ast \nabla j^\ast s^\ast \beta = m^\ast j^\ast \left(\nabla^M \times \nabla^M \right) s^\ast \beta = m^\ast j^\ast s^\ast \nabla^M \beta = (s \circ j \circ m)^\ast \nabla^M \beta,
\]
but the composition $s \circ j \circ m$ is the map
\[
s \circ j \circ m : M \times_B M \times_B M \to M, \quad (x,y,z) \mapsto x,
\]
hence it agrees with the composition
\[
M \times_B M \times_B M \overset{I}{\longrightarrow} M \times M \times M \overset{\mathrm{pr}_1}{\longrightarrow} M
\]
where $\mathrm{pr}_1$ is the projection onto the first factor. Therefore
\[
\begin{aligned}
m^\ast \nabla j^\ast s^\ast \beta & = (\mathrm{pr}_1 \circ I)^\ast \nabla^M \beta = I^\ast \mathrm{pr}_1^\ast \nabla^M \beta = I^\ast \left(\nabla^M \times \nabla^M \times \nabla^M \right) \mathrm{pr}_1^\ast \beta \\
& = \nabla^{(2)} I^\ast\mathrm{pr}_1^\ast \beta = \nabla^{(2)} m^\ast j^\ast s^\ast \beta
\end{aligned}
\]
where we used that $\nabla^{(2)}$ and $\nabla^M \times \nabla^M \times \nabla^M$ are $I$-related, and that $s \circ j \circ m = \mathrm{pr}_1 \circ I$ again. A similar computation shows that (\ref{eq:mult_subm}) holds true when $\alpha$ is of the form $j^\ast t^\ast \beta$. In other words every $(s,t)$-projectable connection in the tangent bundle of a submersion groupoid is automatically multiplicative. 

So multiplicative connections in $T (M \times_B M)$ are in one-to-one correspondence with connections $\nabla^M$ in $TM$ such that $\nabla^M \times \nabla^M$ restricts to $M \times_B M$. Finally, we want to re-express the latter condition in a more transparent way. We have the following
\begin{lemma}
Let $\nabla^M$ be a connection in the tangent bundle $TM$ of the total space $M$ of a surjective submersion $\pi : M \to B$. Then $\nabla^M \times \nabla^M$ restricts to $M \times_B M$ iff $\nabla^M$ projects to $B$, i.e. there exists a connection $\nabla^B$ in $TB$ such that $\nabla^M, \nabla^B$ are $\pi$-related.
\end{lemma}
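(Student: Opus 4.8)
The statement is an ``if and only if'', and the two directions are of quite different character. The ``if'' direction is immediate: if $\nabla^M$ projects to $B$, i.e.\ there is a connection $\nabla^B$ in $TB$ with $\nabla^M, \nabla^B$ $\pi$-related, then Lemma \ref{lem:fibered} applied with $\sigma = \tau = \pi$ (and both total spaces equal to $M$) says precisely that $\nabla^M \times \nabla^M$ restricts to the fiber product $M \times_B M$. So nothing more is needed here.

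For the ``only if'' direction I would first unravel what ``restricts'' means. Writing $S := M \times_B M$ and $j : S \hookrightarrow M \times M$ for the inclusion, the existence of a connection $j$-related to $\nabla^\times := \nabla^M \times \nabla^M$ is equivalent, since $j$ is an embedding, to $S$ being \emph{autoparallel} for $\nabla^\times$: for all $U, V \in \mathfrak X(S)$, extended arbitrarily to $M \times M$, one has $\nabla^\times_U V|_S \in \Gamma(TS)$. Recall also that at a point $(p,q) \in S$ a vector $(u,u') \in T_p M \oplus T_q M$ is tangent to $S$ exactly when $d\pi(u) = d\pi(u')$, and that for decomposable (product) vector fields the defining $\mathrm{pr}_i$-relatedness of the product connection gives the decoupling $\nabla^\times_{(X_1,X_2)}(Y_1,Y_2) = (\nabla^M_{X_1}Y_1, \nabla^M_{X_2}Y_2)$.

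The plan is then to feed autoparallelism a well-chosen family of product fields tangent to $S$ and read off the projectability of $\nabla^M$. For a $\pi$-vertical field $V \in \Gamma(T^\pi M)$ the products $(V,0)$ and $(0,V)$ are tangent to $S$, and for a $\pi$-projectable field $X$ (with projection $\bar X$) the product $(X,X)$ is tangent to $S$. Applying autoparallelism and the decoupling to the pairs $(V,0),(W,0)$; to $(V,0),(X,X)$ and $(X,X),(V,0)$; and to $(X,X),(Y,Y)$ yields, respectively, $d\pi(\nabla^M_V W)=0$ for vertical $V,W$; $d\pi(\nabla^M_V Y) = 0 = d\pi(\nabla^M_X W)$ for vertical $V,W$ and projectable $X,Y$; and, comparing the two legs of $(\nabla^M_X Y, \nabla^M_X Y)$ over points with the same image, that $d\pi(\nabla^M_X Y)$ is constant along the fibers of $\pi$ whenever $X,Y$ are projectable. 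I would then \emph{define} $\nabla^B_{\bar X}\bar Y$ to be the field on $B$ onto which $d\pi(\nabla^M_X Y)$ descends, for arbitrary projectable lifts $X,Y$ of $\bar X, \bar Y$; the vanishing conditions guarantee this is independent of the chosen lifts (changing a lift by a vertical field does not alter $d\pi(\nabla^M_\bullet\,\bullet)$), and a short check using the Leibniz rule of $\nabla^M$ together with $X(\pi^\ast f) = \pi^\ast(\bar X f)$ shows $\nabla^B$ is a genuine linear connection. By construction $\nabla^M_X Y$ is $\pi$-related to $\nabla^B_{\bar X}\bar Y$ for all projectable $X,Y$, which is exactly the $\pi$-relatedness of $\nabla^M$ and $\nabla^B$.

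The only real subtlety -- and the step I would be most careful about -- is the passage from ``the connection restricts'' to the autoparallel condition, and dually making sure I have enough test fields tangent to $S$ to pin down \emph{all} the components that obstruct projectability (the purely vertical, the mixed, and the fiber-dependence of the horizontal part). Once the correct family of product fields is in hand, the decoupling formula makes every computation a one-liner, and the verification that $\nabla^B$ satisfies the connection axioms is routine. Alternatively, the whole ``only if'' direction can be carried out in a fiber-adapted chart $(x^a, y^i)$, where the same three families of conditions read off as $\Gamma^c_{ab}$ being independent of the $y$'s and $\Gamma^c_{AB}=0$ as soon as one lower index is vertical; but the coordinate-free argument is cleaner and makes the descent of $\nabla^B$ manifest.
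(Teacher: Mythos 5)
Your proof is correct, but it takes a genuinely different route from the one in the paper. You work covariantly with vector fields: you translate ``the product connection restricts to $M\times_B M$'' into the autoparallel condition for the embedded submanifold, feed it the three families of product test fields $(V,0)$, $(0,V)$, $(X,X)$ (vertical and projectable), extract the constraints $d\pi(\nabla^M_V W)=0$, $d\pi(\nabla^M_V Y)=d\pi(\nabla^M_X W)=0$, and the fiber-constancy of $d\pi(\nabla^M_X Y)$, and then build $\nabla^B$ by hand, checking well-definedness and the Leibniz rule. This is all sound, and your closing worry is well placed: all three families are genuinely needed (the purely vertical one enters when both lifts are changed simultaneously, since $\nabla^M_{X+V}(Y+W)$ produces the cross term $\nabla^M_VW$). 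The paper instead argues contravariantly and much more compactly: using the characterization of $F$-relatedness as $\nabla^M(F^\ast\theta)=F^\ast(\nabla^N\theta)$ from Lemma \ref{lem:conn,F-rel}, it suffices to show that the covariant $2$-tensor $\nabla^M\pi^\ast\beta$ is $\pi$-basic for every $\beta\in\Omega^1(B)$, and this follows in one line from the descent criterion ``$\mathcal T$ is basic iff its pullbacks along the two projections of $M\times_B M$ agree'' together with $\pi\circ s=\pi\circ t$ and the $s$- and $t$-relatedness of the restricted connection; the assignment $\beta\mapsto\mathcal T^B$ then \emph{is} $\nabla^B$, with no separate verification of the connection axioms. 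Your version buys an explicit, elementary construction of $\nabla^B$ and makes visible exactly which Christoffel components are being constrained; the paper's version buys brevity by outsourcing all the bookkeeping to the dual characterization and the standard descent lemma for covariant tensors. The ``if'' direction is identical in both: it is Lemma \ref{lem:fibered} with $\sigma=\tau=\pi$.
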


\begin{proof}
For the ``only if'' part of the statement, it is enough to show that, for any $\beta \in \Omega^1 (B)$, the covariant $2$-tensor $\nabla^M \pi^\ast \beta$ is $\pi$-\emph{basic}, i.e. there exists a covariant $2$-tensor $\mathcal T^B$ on $B$ such that $\nabla^M \pi^\ast \beta = \pi^\ast \mathcal T^B$ (indeed, in this case, the assignment $\beta \mapsto \mathcal T^B$, $\beta \in \Omega^1 (B)$, is the connection $\nabla^B$ in $TB$ such that $\nabla^M, \nabla^B$ are $\pi$-related). To do that we use the following simple fact: a covariant $2$-tensor $\mathcal T$ on $M$ is $\pi$-basic iff $s^\ast \mathcal T = t^\ast \mathcal T$. So compute
\[
s^\ast \nabla^M \pi^\ast \beta = \nabla s^\ast \pi^\ast \beta = \nabla t^\ast \pi^\ast \beta = t^\ast \nabla^M \pi^\ast \beta,
\]
where we used that $\pi \circ s = \pi \circ t$.

The ``if part'' is a special case of Lemma \ref{lem:fibered}.
\end{proof}

We conclude that multiplicative connections in $T (M \times_B M)$ are in one-to-one correspondence with connections $\nabla^M$ in $TM$ that project to $B$. In particular, any multiplicative connection in the tangent bundle of a submersion groupoid $M \times_B M \rightrightarrows M$ uniquely determines a connection in the tangent bundle $TB$ of the associated orbispace $B$. Generic foliation groupoid can be treated in a similar way.
\end{example}

\begin{example}[Lie groups with abelian Lie algebra]
Let $\mathcal G$ be a Lie group, and let $\mathfrak g$ be its Lie algebra. We interpret $\mathcal G$ as a Lie groupoid over a point. There are at least two canonical (flat) connections in $ T\mathcal G  \to G$: the connections $\overrightarrow D, \overleftarrow D$ corresponding to the trivializations $ T\mathcal G  \cong \mathcal G \times \mathfrak g$ induced by right, left translations respectively. We already remarked that if $\mathfrak g$ is not abelian, $\mathcal G$ cannot possess a multiplicative connection in its tangent bundle. On the other hand, if $\mathfrak g$ is abelian, clearly $\overrightarrow D = \overleftarrow D$, it is a symmetric connection, and it is easy to see that it is a multiplicative connection. Actually, it is the only multiplicative connection in $\mathcal G$. Indeed, by dimension reasons, there are no nontrivial IM $(2,1)$ tensors on $\mathfrak g$, hence there are no nontrivial multiplicative $(2,1)$ tensors on $ T\mathcal G  \to \mathcal G$.
\end{example}

We now come to examples of Lie algebroids $A$ with an IM connection in their tangent bundle. When $A$ is integrable, its source simply connected integration provides an example of a Lie groupoid with a multiplicative connection (in its tangent bundle). We begin describing the infinitesimal version of Example \ref{ex:submersion}.

\begin{example}[The vertical bundle of a fibration]
Let $\pi : M \to B$ be a surjective submersion. We assume for simplicity that the fibers of $\pi$ are connected. The vertical bundle $T^\pi M$ is the Lie algebroid of the submersion groupoid $M \times_B M$. We want to describe IM connections in the tangent bundle of $T^\pi M$. Although the submersion groupoid is not source symply connected unless the fibers of $\pi$ are simply connected, our analysis will confirm that the IM connections in the tangent bundle of $T^\pi M$ are all obtained by differentiating the multiplicative connections in Example \ref{ex:submersion}. 

Let $(\mathcal F, \nabla^{T^\pi M}, \nabla^M, l)$ be an IM connection in the tangent bundle of $T^\pi M$. From (\ref{eq:IM_conn_-}), the connection $\nabla^M$ restricts to $T^\pi M$ and its restriction agrees with $\nabla^{T^\pi M}$. Now, let $V \in \mathfrak X^\pi (M)$, and $X \in \mathfrak X (M)$. Then we have
\[
\nabla^M_V X = \nabla_X^M V + [V, X] + T^M (V,X) = \nabla_X^M V + [V, X] + l(V)(X),
\]
where we used (\ref{eq:IM_tensor_5}). In particular, when $X$ is $\pi$-projectable, then $[V, X]$ is $\pi$-vertical so,
\begin{equation}\label{eq:nabla_VX}
\text{if $X$ is $\pi$-projectable}  \Rightarrow \nabla^M_V X \in \mathfrak X^\pi (M).
\end{equation}

From (\ref{eq:IM_conn_+}), the Lie derivative $\mathcal L_V \nabla^M$ must take values in $T^\pi M$ for all $\pi$-vertical vector fields $V$, i.e.
\[
[V, \nabla^M_X Y] - \nabla^M_{[V,X]}Y - \nabla^M_X [V,Y] \in \mathfrak X^\pi (M), 
\]
for all $V \in \mathfrak X^\pi (M)$, and all $X, Y \in \mathfrak X (M)$. When $X, Y$ are $\pi$-projectable, then $[V, X], [V,Y] \in \mathfrak X^\pi (M)$ and, from (\ref{eq:nabla_VX}), $\nabla_V X \in \mathfrak X^\pi (M)$ as well, so
\[
[V, \nabla^M_X Y] \in \mathfrak X^\pi (M).
\]
As the fibers of $\pi$ are connected, it follows from the arbitrariness of $V$ that $\nabla^M_X Y$ is a $\pi$-projectable vector field for all $\pi$-projectable vector fields $X, Y \in \mathfrak X (M)$. In other words $\nabla^M$ is a $\pi$-projectable connection. Finally notice that, in the present situation, (\ref{eq:IM_conn_+}) and (\ref{eq:IM_tensor_5}) determine $\mathcal F$ and $l$ completely from $\nabla^M$, and a direct computation reveals that they satisfy automatically all remaining axioms of an IM connection. We leave the straightforward details to the reader. We conclude that IM connections in the tangent bundle of $T^\pi M$ are precisely those of the form $(\mathcal F, \nabla^{T^\pi M}, \nabla^M, l)$, where
\begin{enumerate}
\item $\nabla^M$ is a $\pi$-projectable connection in $TM \to M$,
\item $\nabla^{T^\pi M}$ is the restriction of $\nabla^M$ to $T^\pi M$ (a $\pi$-projectable connection does always restrict to $T^\pi M$),
\item $\mathcal F$ is given by (\ref{eq:IM_conn_+}) (with $\rho_A : T^\pi M \to TM$ being the inclusion), and
\item $l$ is given by (\ref{eq:IM_tensor_5}), where $T^M$ is the torsion of $\nabla^M$.
\end{enumerate}
In particular, $(\mathcal F, \nabla^{T^\pi M}, \nabla^M, l)$ is completely determined by $\nabla^M$. Finally, it is easy to see that these IM connections are exactly those obtained by differentiating multiplicative connections in the submersion groupoid $M \times_B M$, as originally announced.
\end{example}

\begin{example}[Transitive Lie algebroids]
In this example we show that every transitive Lie algebroid with abelian isotropies can be equipped with an IM connection in its tangent bundle. Given a transitive Lie algebroid $A \Rightarrow M$, there is a short exact sequence of Lie algebroids
\begin{equation}\label{eq:SES_transitive}
0 \to K \to A \overset{\rho_A}{\to} TM \to 0
\end{equation}
where $K = \ker \rho_A \subseteq A$ is the isotropy bundle. The Lie algebroid $K$ is a bundle of Lie algebras and, from now on, we assume that it is a bundle of abelian Lie algebras. Let us now recall from \cite{mackenzie} Mackenzie's classification of transitive Lie algebroids in the special case when $K$ is abelian. We choose once for all a splitting $TM \to A$ of the exact sequence (\ref{eq:SES_transitive}) and use it to identify $A$ with the direct sum $TM \oplus K$ and the anchor $\rho_A$ with the projection $\mathrm{pr}_{TM} : TM \oplus K \to TM$ onto the first factor. Then the Lie bracket on $\Gamma (A) = \mathfrak X (M) \oplus \Gamma (K)$ becomes
\begin{equation}\label{eq:trans_abel}
\left[(X, h), (Y, k)\right] = \left([X,Y], \nabla^K_X k - \nabla^K_Y h - C(X,Y) \right),
\end{equation}
and one can show that
\begin{enumerate}
\item $\nabla^K$ is a connection in $K$,
\item $C \in \Omega^2 (M, K)$ is a $K$-valued $2$-form on $M$.
\end{enumerate}
Additionally, it follows from the Jacobi identity for the brackets on $\Gamma (A)$ (and the fact that $K$ is abelian) that
\begin{enumerate}
\item[(i)] $\nabla^K$ is a flat connection,
\item[(ii)] $C$ is a closed form wrt the connection differential $d^{\nabla^K} : \Omega^\bullet (M, K) \to \Omega^{\bullet + 1} (M, K)$ induced by $\nabla^K$.
\end{enumerate}
Conversely, given a vector bundle $K \to M$ with a flat connection $\nabla^K$ and a $d^{\nabla^K}$-closed $2$-form $C \in \Omega^2 (M, K)$, Formula (\ref{eq:trans_abel}) defines a Lie algebroid bracket on $TM \oplus K$, with anchor $\mathrm{pr}_K : TM \oplus K \to TM$, such that the isotropy bundle $K$ is abelian.

Now, let $\nabla^K, C$ be as above and consider the induced transitive Lie algebroid $TM \oplus K \Rightarrow M$. We look for IM connections $(\mathcal F, \nabla^{\oplus}, \nabla^M, l)$ in the tangent bundle of $TM \oplus K$. 
First of all, from (\ref{eq:IM_conn_-}) and (\ref{eq:IM_conn_+}), there exist sections $\theta, \lambda \in \Gamma (T^{2,0} M \otimes K)$ such that
\begin{equation}\label{eq:vai2}
\nabla^\oplus_X (Y,0) = (\nabla^M_X Y, \theta (X,Y)), \quad \text{and} \quad l(X,0)(Y) = (\mathcal T^M (X,Y), \lambda (X,Y))
\end{equation}
for all $X,Y \in \mathfrak X (M)$. 
Next, from (\ref{eq:IM_conn_1}) and (\ref{eq:trans_abel}), we have
\begin{equation}\label{eq:vai}
\left([X,Y], \nabla_X^K k - \nabla^K_Y h - C(X,Y) \right) = \nabla^\oplus_X(Y,k) - \nabla^\oplus_Y (X,h) - l(X,h)(Y)
\end{equation}
for all $X,Y \in \mathfrak X (M)$, and all $h, k \in \Gamma (K)$. When $Y = h = 0$ we get
\begin{equation}\label{eq:vai3}
\nabla_X^\oplus (0, k) = (0, \nabla^K_X k).
\end{equation}
Using the latter identity in (\ref{eq:vai}) we find that
\begin{equation}\label{eq:vai4}
l(0,h)(Y) = 0,
\end{equation}
and, using (\ref{eq:vai2}),
\begin{equation}\label{eq:vai5}
\lambda (X,Y) = \theta (X,Y) - \theta(Y,X) + C(X,Y).
\end{equation}
Now, using (\ref{eq:vai2}), (\ref{eq:vai3})--(\ref{eq:vai5}) in (\ref{eq:IM_conn_2}) gives the following formulas for $\mathcal F$:
\begin{equation}\label{eq:vai6}
\begin{aligned}
& \mathcal F (X,0)(Y,Z) \\
& = \Big(
\mathcal L_X \nabla^M (Y,Z), \nabla_X^K \theta (Y,Z) - \theta([X,Y], Z) - \theta (Y, [X, Z]) + \nabla^K_Y C(X, Z) - C (X, \nabla^M_Y Z)
 \Big)
\end{aligned}
\end{equation}
and
\begin{equation}\label{eq:vai7}
\mathcal F (0,k) (Y, Z) = \left( 0, \nabla^K_Y \nabla^K_Z k - \nabla^K_{\nabla^M_Y Z} k \right)
\end{equation}
for all $X,Y,Z \in \mathfrak X (M)$, and $k \in \Gamma (K)$. In particular, $(\mathcal F, \nabla^{\oplus}, \nabla^M, l)$ is completely determined by the sole data $\nabla^M, \theta$. 

Conversely, let $\nabla^M$ be a connection in $TM \to M$ with torsion denoted $\mathcal T^M$, and let $\theta \in \Gamma (T^{2,0} M \otimes K)$. Then we can define a connection $\nabla^\oplus$ in $TM \oplus K$ via the first one of (\ref{eq:vai2}) and (\ref{eq:vai3}). We can also define a vector bundle map $l : TM \oplus K \to TM \otimes \left(TM \oplus K \right)$ via the second one of (\ref{eq:vai2}), (\ref{eq:vai4}) and (\ref{eq:vai5}). Finally, we can define an operator $\mathcal F : \Gamma (TM \oplus K) \to \Gamma \left(T^{2,0} M \otimes \left(TM \oplus K\right)\right)$ via (\ref{eq:vai6}) and (\ref{eq:vai7}). By construction the data $(\mathcal F, \nabla^\oplus, \nabla^M, l)$ now satisfy (\ref{eq:IM_conn_1}) and (\ref{eq:IM_conn_2}). A long but straightforward computation reveals that they also satisfy (\ref{eq:calF}), (\ref{eq:IM_conn_3}) and (\ref{eq:IM_conn_4}). We leave the details to the reader. We conclude that the assignment $(\mathcal F, \nabla^{\oplus}, \nabla^M, l) \mapsto (\nabla^M, \theta)$ establishes a one-to-one correspondence between IM connections in the tangent bundle of $TM \oplus K$ (equipped with the Lie algebroid structure determined by $\nabla^K$ and $C$) and pairs consisting of a connection $\nabla^M$ in $TM \to M$ and a section $\theta \in \Gamma (T^{2,0} M \otimes K)$, with no further restrictions. Hence, all transitive Lie algebroids (resp., source simply connected groupoid) with abelian isotropy Lie algebra bundle possess an IM connection (resp., a multiplicative connection) in their tangent bundle.
\end{example}

We conclude this section presenting a (family of) toy example(s) of a (regular) Lie algebroid $A \Rightarrow M$ admitting an IM connection in its tangent bundle $TA \to A$ and whose anchor is neither injective nor surjective. Notice that solving the equations (\ref{eq:IM_conn_1})--(\ref{eq:IM_conn_4}) for both a Lie algebroid structure and an IM connection in full generality is an hard problem. In order to find a simple but non-trivial example we find it useful to adopt simplifying ansatzes. In what follows, we indeed arrive at the concrete example through a hierarchy of such ansatzes, that we have decided to discuss as they might have an independent interest. As a first simplifying hypothesis, we will search for \emph{symmetric IM connections}, i.e. an IM connection whose IM torsion vanishes. This is actually not restrictive as a Lie algebroid possesses an IM connection iff it possesses a symmetric IM connection.

Begin with a plain vector bundle $A \to M$, and let $(\mathcal F, \nabla^A, \nabla^M, l)$ be the components of a symmetric fiber-wise linear connection in $TA \to A$. In this case $\mathcal D, l$ and the torsion of $\nabla^M$ vanish identically, hence the second order differential operator $\mathcal F$ takes values in symmetric tensors: $\mathcal F : \Gamma (A) \to \Gamma (S^2 T^\ast M \otimes A)$. Additionally, the identity (\ref{eq:calF}) simplifies to
\begin{equation}\label{eq:calF_symm}
\mathcal F (fa) = f \mathcal F (a) + \nabla^M df \otimes a + df \odot \nabla^A a, \quad \text{for all $f \in C^\infty (M)$, and $a \in \Gamma (A)$}.
\end{equation}
Notice that the second summand $\nabla^M df \otimes a$ is duly a symmetric tensor when $\nabla^M$ is a symmetric connection.

 There is another description of $(\mathcal F, \nabla^A, \nabla^M, l)$ that we now explain. First of all, we have the following 

\begin{proposition}\label{prop:calF_0}
An operator $\mathcal F :  \Gamma (A) \to \Gamma (S^2 T^\ast M \otimes A)$ satisfies Equation (\ref{eq:calF_symm}) iff there exists a (necessarily unique) bundle map $F : A \to S^2 T^\ast M \otimes A$ such that
\begin{equation}\label{eq:F}
\mathcal F = \mathcal F_0 + F,
\end{equation}
where $\mathcal F_0 : \Gamma (A) \to \Gamma (S^2 T^\ast M \otimes A)$ is the operator given by
\[
\mathcal F_0 (a) (X, Y) = \frac{1}{2} \left(\nabla^A_X \nabla^A_Y - \nabla^A_{\nabla^M_X Y} + \nabla^A_Y \nabla^E_X - \nabla^A_{\nabla^M_Y X} \right) a
\]
for all $a \in \Gamma(A)$, $X, Y \in \mathfrak X (M)$.
\end{proposition}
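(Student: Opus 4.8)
The plan is to recognize the operator $\mathcal F_0$ as the symmetrization of the standard second covariant derivative attached to the pair $(\nabla^A, \nabla^M)$, and then to reduce the whole equivalence to the elementary fact that a $C^\infty(M)$-linear map $\Gamma(A) \to \Gamma(S^2 T^\ast M \otimes A)$ is the same as a bundle map $A \to S^2 T^\ast M \otimes A$. Concretely, I would introduce the second covariant derivative
\[
\nabla^{2}_{X,Y} a := \nabla^A_X \nabla^A_Y a - \nabla^A_{\nabla^M_X Y} a, \qquad X, Y \in \mathfrak X(M), \ a \in \Gamma(A),
\]
which is $C^\infty(M)$-bilinear in $(X,Y)$ by the usual check, so that the stated operator is exactly its symmetrization over the two tangent slots, $\mathcal F_0(a)(X,Y) = \tfrac{1}{2}\big(\nabla^2_{X,Y} a + \nabla^2_{Y,X}a\big)$.

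The first step is to verify that $\mathcal F_0$ satisfies (\ref{eq:calF_symm}). For this I would compute the Leibniz rule of $\nabla^2$ in the section slot directly, obtaining
\[
\nabla^2_{X,Y}(fa) = f\, \nabla^2_{X,Y} a + (\nabla^M df)(X,Y)\, a + X(f)\,\nabla^A_Y a + Y(f)\,\nabla^A_X a,
\]
where $(\nabla^M df)(X,Y) = X(Y(f)) - (\nabla^M_X Y)(f)$ is the covariant Hessian. Symmetrizing over $X,Y$ and using that $\nabla^M$ is torsion-free (so that $\nabla^M df$ is a genuine symmetric $(2,0)$-tensor, and the two cross terms are already symmetric and sum to $df \odot \nabla^A a$) reproduces exactly the inhomogeneous part $\nabla^M df \otimes a + df \odot \nabla^A a$ of (\ref{eq:calF_symm}). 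This symmetrization bookkeeping is the only computation in the argument, and I expect it to be the main (though entirely routine) obstacle; there is no conceptual difficulty, since it is just the Hessian identity for a symmetric connection.

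The remaining step is purely formal. Given any $\mathcal F$ satisfying (\ref{eq:calF_symm}), set $F := \mathcal F - \mathcal F_0$. Since $\mathcal F$ and $\mathcal F_0$ obey the same identity with identical inhomogeneous terms, those terms cancel in the difference, leaving $F(fa) = f\,F(a)$ for all $f \in C^\infty(M)$ and $a \in \Gamma(A)$. Hence $F$ is $C^\infty(M)$-linear and therefore induced by a unique bundle map $A \to S^2 T^\ast M \otimes A$, giving the decomposition (\ref{eq:F}) together with its uniqueness. Conversely, if $\mathcal F = \mathcal F_0 + F$ with $F$ a bundle map, then $F(fa) = f\,F(a)$ contributes nothing to the inhomogeneous terms, so $\mathcal F$ inherits (\ref{eq:calF_symm}) from $\mathcal F_0$. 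This settles both implications and completes the proof.
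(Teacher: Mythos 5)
Your proposal is correct and follows essentially the same route as the paper: verify that $\mathcal F_0$ obeys the Leibniz-type identity (\ref{eq:calF_symm}) (which the paper dispatches as ``a direct computation'' and you carry out via the Hessian identity for the second covariant derivative, correctly using the symmetry of $\nabla^M$), and then observe that the difference $F = \mathcal F - \mathcal F_0$ of two operators satisfying the same affine identity is $C^\infty(M)$-linear, hence a bundle map. The only difference is that you spell out the computation and the converse implication explicitly, which the paper leaves implicit.
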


\begin{proof}
A direct computation shows that
\[
\mathcal F_0 (fa) = f \mathcal F_0 (a) + \nabla^M df \otimes a + df \odot \nabla^A e.
\]
It follows that $F := \mathcal F - \mathcal F_0$ is actually a vector bundle map $A \to S^2 T^\ast M \otimes A$. This concludes the proof.
\end{proof}

We stress that the operator $\mathcal F_0$ of Proposition \ref{prop:calF_0} does only depend on $\nabla^A, \nabla^M$.

\begin{corollary}\label{cor:tertiary_comp}
The assignment $(\mathcal F, \nabla^A, \nabla^M, 0) \mapsto (\nabla^A, \nabla^M, F)$ establishes a $C^\infty (M)$-linear bijection between the components of symmetric fiber-wise linear connections in $TA \to A$ and triples $(\nabla^A, \nabla^M, F)$ consisting of 
\begin{enumerate}
\item a connection $\nabla^A$ in $A \to M$,
\item a symmetric connection $\nabla^M$ in $TM \to M$,
\item a vector bundle map $F : A \to S^2 T^\ast M \otimes A$,
\end{enumerate}
with no further restrictions. Here $F$ is given by (\ref{eq:F}) in terms of the components $(\mathcal F, \nabla^A, \nabla^M, 0)$.
\end{corollary}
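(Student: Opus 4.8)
The plan is to deduce Corollary \ref{cor:tertiary_comp} directly from Proposition \ref{prop:calF_0}, the only genuine preliminary being to unwind what ``components of a symmetric fiber-wise linear connection'' means. First I would observe that a fiber-wise linear connection is symmetric precisely when its torsion---the linear vector valued $2$-form with components $(\mathcal D, l, T^M)$---vanishes. Since $\mathcal D$ records, via (\ref{eq:IM_torsion}), the failure of $\mathcal F$ to be symmetric, and $T^M$ is the torsion of $\nabla^M$, this vanishing amounts to the three conditions $l = 0$, $\nabla^M$ symmetric, and $\mathcal F$ taking values in $S^2 T^\ast M \otimes A$. Under these conditions the defining identity (\ref{eq:calF}) collapses exactly to (\ref{eq:calF_symm}). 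Hence the components of symmetric fiber-wise linear connections are precisely the $4$-tuples $(\mathcal F, \nabla^A, \nabla^M, 0)$ with $\nabla^M$ symmetric and $\mathcal F : \Gamma(A) \to \Gamma(S^2 T^\ast M \otimes A)$ satisfying (\ref{eq:calF_symm}); these are exactly the operators classified by Proposition \ref{prop:calF_0}.

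Next I would write down the two mutually inverse assignments. Given such a $4$-tuple, I retain $\nabla^A$ and $\nabla^M$ and set $F := \mathcal F - \mathcal F_0$, where $\mathcal F_0$ is the operator built from $\nabla^A, \nabla^M$ as in Proposition \ref{prop:calF_0}; that proposition guarantees $F$ is an honest vector bundle map $A \to S^2 T^\ast M \otimes A$, so $(\nabla^A, \nabla^M, F)$ is a triple of the required type. Conversely, starting from a triple $(\nabla^A, \nabla^M, F)$ with $\nabla^M$ symmetric and $F$ a vector bundle map, I form $\mathcal F_0$ from $\nabla^A, \nabla^M$ and put $\mathcal F := \mathcal F_0 + F$; as $\mathcal F_0$ is manifestly symmetric in its two tangent arguments and $F$ lands in $S^2 T^\ast M \otimes A$, the operator $\mathcal F$ is symmetric, and Proposition \ref{prop:calF_0} certifies that it satisfies (\ref{eq:calF_symm}). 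Because $\mathcal F_0$ depends only on the pair $(\nabla^A, \nabla^M)$, which both assignments leave untouched, the identities $\mathcal F \mapsto \mathcal F - \mathcal F_0 \mapsto \mathcal F_0 + (\mathcal F - \mathcal F_0) = \mathcal F$ and $F \mapsto \mathcal F_0 + F \mapsto (\mathcal F_0 + F) - \mathcal F_0 = F$ show the two assignments are mutually inverse, which gives the bijection.

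Finally I would address the $C^\infty(M)$-linearity. For fixed $(\nabla^A, \nabla^M)$ the operators $\mathcal F$ obeying (\ref{eq:calF_symm}) form a torsor over the $C^\infty(M)$-module of vector bundle maps $A \to S^2 T^\ast M \otimes A$: by (\ref{eq:calF_symm}) the difference of any two of them is $C^\infty(M)$-linear, hence such a bundle map. The assignment $\mathcal F \mapsto F = \mathcal F - \mathcal F_0$ trivialises this torsor using the distinguished point $\mathcal F_0$, and it clearly intertwines the module structures; this is the precise sense in which the correspondence is $C^\infty(M)$-linear. The main thing to be careful about in the write-up is exactly this bookkeeping---remembering that the source is naturally an affine space rather than a vector space---together with checking that the symmetry of the connection is correctly captured by the three conditions isolated in the first step. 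Neither of these is a real obstacle, and I expect the corollary to follow formally from Proposition \ref{prop:calF_0} with no further computation.
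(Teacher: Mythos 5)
Your proposal is correct and takes essentially the same route as the paper, which states Corollary \ref{cor:tertiary_comp} without a separate proof precisely because it is the immediate repackaging of Proposition \ref{prop:calF_0} that you describe (the paper even records your first step---that symmetry forces $\mathcal D$, $l$ and $T^M$ to vanish and $\mathcal F$ to take values in $S^2 T^\ast M \otimes A$---in the paragraph preceding that proposition). Your remark that the source is an affine space trivialised by the base point $\mathcal F_0$ is the right way to read the ``$C^\infty(M)$-linear'' claim, and nothing further is needed.
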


We call $(\nabla^A, \nabla^M,  F)$ as in Corollary \ref{cor:tertiary_comp} the \emph{secondary components} of a symmetric fiber-wise linear connection. Their advantage on the plain components is that there are no relations among them.

Now, let $A \Rightarrow M$ be a Lie algebroid. Our next aim is to find conditions on the secondary components $(\nabla^A, \nabla^M, F)$ of a symmetric fiber-wise linear connection in $TA \to A$ so that the corresponding (plain) components $(\mathcal F, \nabla^A, \nabla^M, 0)$ do actually form an IM connection. We will not do this in full generality. Instead, we will make our second simplifying hypothesis and assume that $\nabla^A$ is a flat connection. In what follows, we denote by $\bbnabla$ the connection induced by $\nabla^A, \nabla^M$ on the whole tensor algebra of $A \oplus TM$.

\begin{theorem}\label{theor:nabla^A_flat}
Let $A \Rightarrow M$ be a Lie algebroid, let $(\mathcal F, \nabla^A, \nabla^M, 0)$ be the components of a symmetric fiber-wise linear connection in $TA \to A$, and let $(\nabla^A, \nabla^M, F)$ be the secondary components. Assume that $\nabla^A$ is a flat connection. Then $(\mathcal F, \nabla^A, \nabla^M, 0)$ is an IM connection iff
\begin{align}
[a,b]_A & = \nabla^A_{\rho_A (a)} b - \nabla^A_{\rho_A(b)} a, \label{eq:3.1} \\
\bbnabla \rho_A & = 0, \label{eq:3.2}\\
\rho_A \circ  F(a) & = \iota_{\rho_A(a)}R^M,  \label{eq:3.3}\\
\iota_{\rho_A (b)} F(a) & = 0,\label{eq:3.4}\\
\left(\bbnabla_{\rho_A (a)} F\right)(b), & =\left(\bbnabla_{\rho_A (b)} F\right)(a), \label{eq:3.5}
\end{align}
for all $a,b \in \Gamma (A)$, where $R^M$ is the curvature of $\nabla^M$. 
\end{theorem}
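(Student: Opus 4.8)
The plan is to unwind Definition \ref{def:IM_conn} in the symmetric case and match the resulting equations against \eqref{eq:3.1}--\eqref{eq:3.5}. Since the IM torsion vanishes we have $l=0$ and $\mathcal D=0$, so \eqref{eq:IM_conn_4} holds automatically and the assertion that $(\mathcal F,\nabla^A,\nabla^M,0)$ is an IM connection amounts to the three identities \eqref{eq:IM_conn_1}, \eqref{eq:IM_conn_2}, \eqref{eq:IM_conn_3} (the defining equation \eqref{eq:calF_symm} being built into the phrase ``components of a symmetric fiber-wise linear connection''). Equation \eqref{eq:IM_conn_1} with $l=0$ is literally \eqref{eq:3.1}, so that part is free. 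The rest of the proof shows that, under flatness of $\nabla^A$, \eqref{eq:IM_conn_2} is equivalent to \eqref{eq:3.2} together with \eqref{eq:3.4}, while \eqref{eq:IM_conn_3} is equivalent to \eqref{eq:3.3} together with \eqref{eq:3.5}; I keep both implications in view so the final statement is an iff.

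First I would treat \eqref{eq:IM_conn_2}. Granting \eqref{eq:3.1}, the computation of $\mathcal L_{fa}\nabla^A(X,b)$ recorded in the remark following Definition \ref{def:IM_conn} gives \eqref{eq:IM_conn_-}, which is exactly \eqref{eq:3.2} (conversely \eqref{eq:3.2} is one of our hypotheses). With \eqref{eq:3.1} and \eqref{eq:3.2} available I rewrite $\mathcal L_a\nabla^A(X,b)$: substituting $[a,-]_A=\nabla^A_{\rho_A(a)}(-)-\nabla^A_{\rho_A(-)}a$ and using that flatness turns $\nabla^A_{\rho_A(a)}\nabla^A_X b-\nabla^A_X\nabla^A_{\rho_A(a)}b$ into $\nabla^A_{[\rho_A(a),X]}b$ (which cancels the explicit middle term of $\mathcal L_a\nabla^A$), I obtain
\[
\mathcal L_a\nabla^A(X,b)=\nabla^A_X\nabla^A_{\rho_A(b)}a-\nabla^A_{\nabla^M_X\rho_A(b)}a,
\]
the second covariant derivative of $a$ evaluated at $(X,\rho_A(b))$. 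Since by Proposition \ref{prop:calF_0} the operator $\mathcal{F}_0(a)$ is the symmetrization of this same second covariant derivative, \eqref{eq:IM_conn_2} becomes the statement that its antisymmetric part equals $F(a)(X,\rho_A(b))$; but that antisymmetric part is $\tfrac12 R^{\nabla^A}(X,\rho_A(b))a$, which vanishes by flatness. Hence \eqref{eq:IM_conn_2} holds iff $F(a)(X,\rho_A(b))=0$ for all $X$, i.e. \eqref{eq:3.4} (recall $F(a)$ is symmetric).

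Next I would treat \eqref{eq:IM_conn_3}. Testing $C^\infty(M)$-linearity in $b$, that is comparing $\mathcal F([a,fb]_A)$ computed through \eqref{eq:IM_conn_3} with the Leibniz rule \eqref{eq:calF_symm} and using \eqref{eq:IM_conn_2}, reproduces \eqref{eq:IM_conn_+}. Expanding $\rho_A\circ\mathcal F(a)=\rho_A\circ\mathcal{F}_0(a)+\rho_A\circ F(a)$ and pushing $\rho_A$ through $\nabla^A$ by means of \eqref{eq:3.2} identifies $\rho_A\circ\mathcal{F}_0(a)$ with the symmetrized Hessian of $\rho_A(a)$ for $\nabla^M$, while the classical identity $(\mathcal L_V\nabla^M)(X,Y)=\nabla^M_X\nabla^M_Y V-\nabla^M_{\nabla^M_X Y}V+R^M(V,X)Y$ supplies the remaining curvature term; comparing the two sides of \eqref{eq:IM_conn_+} yields precisely $\rho_A\circ F(a)=\iota_{\rho_A(a)}R^M$, i.e. \eqref{eq:3.3}. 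Once \eqref{eq:3.3} holds both sides of \eqref{eq:IM_conn_3} are tensorial in $a$ and $b$, so it remains to verify \eqref{eq:IM_conn_3} as a bundle-map identity. Writing $\mathcal F=\mathcal{F}_0+F$ and expanding $\mathcal L_a\mathcal F(b)-\mathcal L_b\mathcal F(a)-\mathcal F([a,b]_A)$, the purely $\mathcal{F}_0$-part collapses by flatness of $\nabla^A$ to curvature terms matched by \eqref{eq:3.3}, the mixed terms involving $F(a)(\rho_A(b),-)$ are killed by \eqref{eq:3.4}, and the surviving $F$-part organizes into $(\bbnabla_{\rho_A(a)}F)(b)-(\bbnabla_{\rho_A(b)}F)(a)$, so that \eqref{eq:IM_conn_3} reduces to \eqref{eq:3.5}.

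The main obstacle is this last reduction: disentangling $\mathcal L_a\mathcal{F}_0(b)-\mathcal L_b\mathcal{F}_0(a)-\mathcal{F}_0([a,b]_A)$ into a part that vanishes by flatness and a part whose curvature residue is absorbed by \eqref{eq:3.3}, while simultaneously recognizing the $F$-dependent remainder as the covariant-derivative symmetry \eqref{eq:3.5}. This is the ``long but straightforward'' computation; the bookkeeping is eased by noting that \eqref{eq:3.1}, \eqref{eq:3.2} and \eqref{eq:3.4} allow one to commute $\rho_A$ past every connection in sight and to discard every contraction of $F$ against an anchor image, so that the only genuinely new datum that can survive is the covariant derivative of $F$ along anchor directions.
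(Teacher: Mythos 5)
Your proposal is correct and follows essentially the same route as the paper: identify \eqref{eq:3.1} and \eqref{eq:3.2} with \eqref{eq:IM_conn_1} and \eqref{eq:IM_conn_-}, extract \eqref{eq:3.4} from \eqref{eq:IM_conn_2} using flatness and the decomposition $\mathcal F = \mathcal F_0 + F$, extract \eqref{eq:3.3} from \eqref{eq:IM_conn_+}, and reduce \eqref{eq:IM_conn_3} to \eqref{eq:3.5} via $C^\infty(M)$-bilinearity of the defect (the paper then evaluates on $\nabla^A$-parallel sections, which is just a convenient way of performing your ``bundle-map identity'' check; note only that it is the \emph{difference} of the two sides of \eqref{eq:IM_conn_3}, not each side separately, that is tensorial in $(a,b)$).
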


\begin{proof}
First of all, assume that $(\mathcal F, \nabla^A, \nabla^M, 0)$ is an IM connection. Equations (\ref{eq:3.1}) and (\ref{eq:3.2}) are just Equations (\ref{eq:IM_conn_1}) and (\ref{eq:IM_conn_-}) (in the symmetric connection case). Equation (\ref{eq:3.3}) can be easily obtained from (\ref{eq:IM_conn_+}), (\ref{eq:F}) and the fact that $\nabla^M$ is a symmetric connection. Equation (\ref{eq:3.4}) follows from (\ref{eq:IM_conn_2}), (\ref{eq:F}) and the hypothesis that the curvature of $\nabla^A$ vanishes. Finally, equation (\ref{eq:3.5}) can be obtained from (\ref{eq:IM_conn_3}) as follows. First of all, it is straightforward to check that the expression
\[
\mathcal F \left([a,b]_A\right) - \mathcal L_a \mathcal F(b) + \mathcal L_b \mathcal F(a)
\]
is $C^\infty (M)$-bilinear in the arguments $(a, b)$ modulo Equations (\ref{eq:IM_conn_-}) and (\ref{eq:IM_conn_2}). Both sides of (\ref{eq:3.5}) are $C^\infty (M)$-bilinear in $(a, b)$ as well. So, when $(\mathcal F, \nabla^A, \nabla^M, 0)$ is an IM connection, in order to check (\ref{eq:3.5}), it is enough to check that it holds when $a,b$ are $\nabla^A$-parallel sections. This is an easy computation that we leave to the reader. 

Conversely, that Equations (\ref{eq:3.1})--(\ref{eq:3.5}) imply Equations (\ref{eq:IM_conn_1})--(\ref{eq:IM_conn_4}) can be proved reversing the previous computations.
\end{proof}

\begin{corollary}\label{cor:F=0}
Let $A \to M$ be a vector bundle. Consider
\begin{itemize}
\item[(i)] a flat connection $\nabla^A$ in $A \to M$,
\item[(ii)] a symmetric connection $\nabla^M$ in $TM \to M$ with curvature $R^M$,
\item[(iii)] a vector bundle map $\rho_A : A \to TM$.
\end{itemize} 
Assume that
\begin{itemize}
\item[(I)] $\rho_A$ is parallel wrt the connection induced by $\nabla^A, \nabla^M$ in the tensor algebra of $A \oplus TM$, and
\item[(II)] $\iota_{\rho_A(a)} R^M = 0$ for all $a \in \Gamma (A)$. 
\end{itemize}
Then
\begin{enumerate}
\item The bracket $[a, b]_A = \nabla^A_{\rho_A(a)}b - \nabla^A_{\rho_A(b)}a$ on $\Gamma (A)$ gives to $A$ the structure of a Lie algebroid with anchor $\rho_A$, and
\item $(\nabla^A, \nabla^M, 0)$ are the secondary components of a symmetric IM connection in $TA \to A$. 
\end{enumerate}
\end{corollary}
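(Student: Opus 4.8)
The plan is to establish the two conclusions in order: first prove by hand that the given bracket makes $A$ a Lie algebroid (conclusion (1)), and then feed the resulting structure into Theorem \ref{theor:nabla^A_flat} with $F = 0$ to obtain the IM connection (conclusion (2)). For the bracket $[a,b]_A = \nabla^A_{\rho_A(a)}b - \nabla^A_{\rho_A(b)}a$, skew-symmetry is immediate and the Leibniz rule $[a,fb]_A = f[a,b]_A + (\rho_A(a)f)\,b$ is a one-line check using that $\rho_A$ is $C^\infty(M)$-linear. The key intermediate fact is that $\rho_A$ is a bracket morphism. Indeed, assumption (I) written out is exactly $\rho_A(\nabla^A_X a) = \nabla^M_X \rho_A(a)$, i.e.\ equation \eqref{eq:IM_conn_-}, so that
\[
\rho_A([a,b]_A) = \nabla^M_{\rho_A(a)}\rho_A(b) - \nabla^M_{\rho_A(b)}\rho_A(a) = [\rho_A(a),\rho_A(b)],
\]
the last step being the vanishing of the torsion of the symmetric connection $\nabla^M$.

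With the morphism property in hand, I would verify the Jacobi identity by a short cyclic computation. Setting $X=\rho_A(a)$, $Y=\rho_A(b)$, $Z=\rho_A(c)$ and expanding $[[a,b]_A,c]_A = \nabla^A_{[X,Y]}c - \nabla^A_Z(\nabla^A_X b - \nabla^A_Y a)$ (using the morphism property to rewrite $\rho_A([a,b]_A)=[X,Y]$), the cyclic sum over $(a,b,c)$ regroups, separately for each of the three acted-on sections, into the curvature of $\nabla^A$ evaluated on a pair of anchored vector fields. By the flatness hypothesis (i) each such term vanishes, so the Jacobiator is zero and $A$ is a Lie algebroid with anchor $\rho_A$. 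I note that assumption (II) plays no role here; only (I) and flatness are used.

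For conclusion (2), now that $A$ is a Lie algebroid and $\nabla^A$ is flat, I would apply the ``if'' direction of Theorem \ref{theor:nabla^A_flat} to the candidate components $(\mathcal F_0, \nabla^A, \nabla^M, 0)$, whose secondary datum is $F=0$ by \eqref{eq:F}. One simply checks that \eqref{eq:3.1}--\eqref{eq:3.5} hold: \eqref{eq:3.1} is the definition of the bracket, \eqref{eq:3.2} is assumption (I), and with $F=0$ equation \eqref{eq:3.3} reduces precisely to assumption (II), while \eqref{eq:3.4} and \eqref{eq:3.5} become $0=0$. Hence $(\mathcal F_0,\nabla^A,\nabla^M,0)$ is an IM connection, and it is symmetric because $F=0$, $l=0$, and $\nabla^M$ is torsion-free force its IM torsion $(\mathcal D,l,T^M)$ to vanish. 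This exhibits $(\nabla^A,\nabla^M,0)$ as the secondary components of a symmetric IM connection.

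The only genuinely computational step is the Jacobi identity, and even that is routine once the anchor is known to be a morphism; I therefore expect the main conceptual point—rather than an obstacle—to be the observation that hypotheses (I) and (II) are precisely what is needed to trivialize conditions \eqref{eq:3.2}--\eqref{eq:3.5} of Theorem \ref{theor:nabla^A_flat} in the degenerate case $F=0$, so that the corollary is little more than a specialization of that theorem once conclusion (1) is secured.
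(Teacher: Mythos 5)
Your proposal is correct and follows essentially the same route as the paper: establish the Jacobi identity for the bracket via the flatness of $\nabla^A$ (the paper invokes the general fact that the Jacobiator of such a bracket is the cyclic sum of curvature terms $R^A(\rho_A(a),\rho_A(b))c$), and then specialize Theorem \ref{theor:nabla^A_flat} to $F=0$, where (\ref{eq:3.2}) becomes hypothesis (I), (\ref{eq:3.3}) becomes hypothesis (II), and (\ref{eq:3.4})--(\ref{eq:3.5}) trivialize. Your write-up is in fact slightly more careful than the paper's one-line proof, since you explicitly note that the anchor-morphism identity $\rho_A([a,b]_A)=[\rho_A(a),\rho_A(b)]$ (which uses (I) together with the torsion-freeness of $\nabla^M$) is needed before the Jacobiator reduces to pure curvature terms.
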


\begin{proof}
It immediately follows from Theorem \ref{theor:nabla^A_flat} and the following remark: if $\nabla^A$ is a connection in $A \to M$ and $\rho_A : A \to TM$ is a bundle map, then the bracket $[a, b]_A = \nabla^A_{\rho_A(a)}b - \nabla^A_{\rho_A(b)}a$ on $\Gamma (A)$ is a biderivation with anchor $\rho_A$ and it satisfies the Jacobi identity iff
\[
R^A (\rho_A(a), \rho_A(b))c + R^A (\rho_A(b), \rho_A(c))a + R^A (\rho_A(c), \rho_A(a))b = 0
\]
where $R^A$ is the curvature of $\nabla^A$.
\end{proof}

\begin{example}[A further toy example]
We are finally ready to construct a simple example. We assume $M = G$ is a Lie group with Lie algebra $\mathfrak g$, $A =  T G $ is its tangent bundle. Additionally, we choose $\nabla^A = \overrightarrow{D}$ the flat connection corresponding to the trivialization $TM \cong G \times \mathfrak g$ induced by right translations. Clearly, $\overrightarrow D$ is the unique flat connections in $TM \to M$ such that all right invariant tensors are parallel. In particular the torsion $T^D$ is exactly $- \overrightarrow B$: the right invariant vector valued $2$-form that agrees with minus the Lie bracket $B = [-,-]_{\mathfrak g}$ of $\mathfrak g$ at the unit. We also put $\nabla^M = \overrightarrow D{}^{\mathrm{sym}} = \overrightarrow D - \frac{1}{2} \overrightarrow B$. The curvature of $\nabla^M$ is then given by
\begin{equation}\label{eq:R^M_Lie_group}
R^M (\overrightarrow v, \overrightarrow w) \overrightarrow z = - \frac{1}{4} \overrightarrow{\left[ \left[v,w\right]_{\mathfrak g}, z\right]_{\mathfrak g}}, \quad v,w,z \in \mathfrak g.
\end{equation}
Finally, we choose $\rho$ to be a right invariant $(1,1)$ tensor: $\rho = \overrightarrow r$, with $r \in \operatorname{End} \mathfrak g$. Then $\rho_A$ is $\bbnabla$-parallel iff $\rho \circ \overrightarrow B = 0$ or, equivalently,
\begin{equation}\label{eq:r1}
[ \mathfrak g, \mathfrak g] \subseteq \ker r.
\end{equation}
The last condition we want to satisfy is $\iota_{\rho_A(X)} R^M = 0$ for all $X \in \mathfrak X (M)$. But, from (\ref{eq:R^M_Lie_group}), this is equivalent to
\begin{equation}\label{eq:r2}
\left[ \left[r(v),w\right]_{\mathfrak g}, z\right]_{\mathfrak g} = 0
\end{equation} 
for all $v, w, z \in \mathfrak g$. This discussion, together with Corollary \ref{cor:F=0} proves the following
\begin{proposition}
Let $G$ be a Lie group with Lie algebra $\mathfrak g$, and let $r \in \operatorname{End} \mathfrak g$ be an endomorphism satisfying (\ref{eq:r1}) and (\ref{eq:r2}). Then the bracket
\[
[X,Y]_{r} = \overrightarrow D_{\overrightarrow r (X)} Y - \overrightarrow D_{\overrightarrow r (Y)} X
\]
gives to $TM \to M$ the structure $(TM)_r$ of a Lie algebroid with anchor $\overrightarrow r$, and $(\overrightarrow D, \overrightarrow D{}^{\mathrm{sym}}, 0)$ is a triple of secondary components of a symmetric IM connection in the tangent bundle of $(TM)_r$.
\end{proposition}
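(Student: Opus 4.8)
The plan is to deduce the statement as a direct application of Corollary~\ref{cor:F=0} to the vector bundle $A = TG$ equipped with the data $\nabla^A = \overrightarrow{D}$, $\nabla^M = \overrightarrow{D}{}^{\mathrm{sym}}$ and $\rho_A = \overrightarrow{r}$. The three structural inputs of that corollary are immediate: $\overrightarrow{D}$ is flat, being the connection associated with the right trivialization $TG \cong G \times \mathfrak g$ (for which every right-invariant tensor is parallel); $\overrightarrow{D}{}^{\mathrm{sym}}$ is a symmetric connection by construction; and $\overrightarrow{r}$ is a genuine vector bundle map $TG \to TG$. Hence the whole argument reduces to checking that the two remaining hypotheses (I) and (II) of Corollary~\ref{cor:F=0} hold, and the point of the discussion preceding the statement is precisely that they translate into conditions (\ref{eq:r1}) and (\ref{eq:r2}).

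For hypothesis (I), I would compute $\bbnabla \overrightarrow{r}$, where $\bbnabla$ is induced by $\overrightarrow{D}$ on the $A$-slot and by $\overrightarrow{D}{}^{\mathrm{sym}}$ on the $TM$-slot; recall this is exactly the parallelism identity (\ref{eq:IM_conn_-}). Since right-invariant tensors are $\overrightarrow{D}$-parallel, the covariant derivative of $\overrightarrow{r}$ taken with $\overrightarrow{D}$ on both slots vanishes, so the only surviving contribution comes from the difference $\overrightarrow{D}{}^{\mathrm{sym}} - \overrightarrow{D}$ sitting on the target, which is governed by $\overrightarrow{B}$. Evaluating on right-invariant vector fields then reduces $\bbnabla \overrightarrow{r} = 0$ to the bracket condition $[\mathfrak g, \mathfrak g] \subseteq \ker r$, i.e. (\ref{eq:r1}), as carried out in the discussion preceding the statement.

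For hypothesis (II), I would simply feed the explicit curvature formula (\ref{eq:R^M_Lie_group}) into the contraction $\iota_{\rho_A(a)} R^M$. With $\rho_A(\overrightarrow{v}) = \overrightarrow{r(v)}$ this gives $\iota_{\overrightarrow{r(v)}} R^M(\overrightarrow{w})\overrightarrow{z} = -\tfrac14 \overrightarrow{[[r(v),w]_{\mathfrak g},z]_{\mathfrak g}}$, whose vanishing for all $v,w,z$ is exactly (\ref{eq:r2}). Therefore, under the two standing hypotheses (\ref{eq:r1}) and (\ref{eq:r2}), both assumptions of Corollary~\ref{cor:F=0} are met, and the corollary yields the two assertions at once: part (1) gives that $[-,-]_r$ is a Lie algebroid bracket on $TM$ with anchor $\overrightarrow{r}$, and part (2) gives that $(\overrightarrow{D}, \overrightarrow{D}{}^{\mathrm{sym}}, 0)$ are the secondary components of a symmetric IM connection in the tangent bundle of $(TM)_r$.

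The only genuine work, and hence the main obstacle, is the pair of elementary computations on right-invariant frames in the verifications of (I) and (II). Of these, (I) is the more delicate one, since the resulting algebraic condition on $r$ is sensitive to the sign conventions relating $T^{\overrightarrow{D}}$, $\overrightarrow{B}$ and $\overrightarrow{D}{}^{\mathrm{sym}}$; once those are fixed consistently with the conventions of the example, everything else is a formal invocation of Corollary~\ref{cor:F=0}.
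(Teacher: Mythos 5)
Your strategy---feed the data $(\overrightarrow D, \overrightarrow D{}^{\mathrm{sym}}, \overrightarrow r)$ into Corollary \ref{cor:F=0} and reduce its hypotheses (I) and (II) to (\ref{eq:r1}) and (\ref{eq:r2})---is exactly the paper's proof, which consists of the discussion preceding the Proposition plus the invocation of that corollary. However, the one step you yourself single out as delicate, the verification of (I), is where the argument breaks, and you do not actually carry it out: you defer to the preceding discussion, which gets it wrong, and the problem is not one of sign conventions. Hypothesis (I) is $\bbnabla \rho_A = 0$, i.e.\ identity (\ref{eq:IM_conn_-}): $\overrightarrow r\left(\overrightarrow D_X Y\right) = \overrightarrow D{}^{\mathrm{sym}}_X\left(\overrightarrow r (Y)\right)$. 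Evaluated on right-invariant fields $X = \overrightarrow v$, $Y = \overrightarrow w$, the left-hand side vanishes while the right-hand side equals $\pm\tfrac12 \overrightarrow B\left(\overrightarrow v, \overrightarrow{r(w)}\right) = \pm\tfrac12 \overrightarrow{[v, r(w)]_{\mathfrak g}}$. Hence (I) is equivalent to $[\mathfrak g, r(\mathfrak g)]_{\mathfrak g} = 0$, i.e.\ $r(\mathfrak g) \subseteq Z(\mathfrak g)$, and \emph{not} to $\rho_A \circ \overrightarrow B = 0$, i.e.\ $[\mathfrak g,\mathfrak g] \subseteq \ker r$, which is (\ref{eq:r1}). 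The computation you are relying on applies $\overrightarrow D$ to the target slot and the correction term $\overrightarrow B$ to the source slot of $\operatorname{Hom}(A, TM)$, which is backwards: in (\ref{eq:IM_conn_-}) the connection $\nabla^A = \overrightarrow D$ sits inside $\rho_A$ and $\nabla^M = \overrightarrow D{}^{\mathrm{sym}}$ sits outside.

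This gap is fatal to the statement as formulated, not merely to the proof. On the Heisenberg group take $r = e_2 \otimes e_2^\ast + e_3 \otimes e_3^\ast$: conditions (\ref{eq:r1}) and (\ref{eq:r2}) hold, yet $[\overrightarrow{e_2}, \overrightarrow{e_3}]_r = 0$ while $\left[\overrightarrow r(\overrightarrow{e_2}), \overrightarrow r(\overrightarrow{e_3})\right] = \pm \overrightarrow{e_1} \neq 0$, so $\overrightarrow r$ fails to intertwine $[-,-]_r$ with the bracket of vector fields; since anchor-compatibility is forced by the Jacobi identity together with the Leibniz rule (which visibly holds), $[-,-]_r$ cannot satisfy Jacobi, and $(TM)_r$ is not a Lie algebroid. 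The repair is to replace (\ref{eq:r1}) by the condition $r(\mathfrak g) \subseteq Z(\mathfrak g)$; under it hypothesis (II)---and indeed (\ref{eq:r2})---becomes automatic by (\ref{eq:R^M_Lie_group}), and your appeal to Corollary \ref{cor:F=0} then goes through verbatim. As it stands, though, both your proposal and the paper's own argument contain the same unproved (and false) reduction of (I) to (\ref{eq:r1}).
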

\end{example}

Beyond the abelian Lie algebra, the simplest possible Lie algebra $\mathfrak g$ possessing an endomorphism $r \in \operatorname{End} \mathfrak g$ as in the latter proposition is the Heisenberg Lie algebra $\mathfrak h = \langle e_1, e_2, e_3\rangle$, with
\[
[e_1, e_2]_{\mathfrak h} = [e_1, e_3]_{\mathfrak h} = 0, \quad [e_2, e_3]_{\mathfrak h} = e_1.
\]
Denote by $(e_1^\ast, e_2^\ast, e_3^\ast)$ the dual frame in $\mathfrak h^\ast$.
The derived Lie algebra $[\mathfrak h, \mathfrak h]$ is $1$-dimensional and it is spanned by $e_1$. Additionally every element $v \in \mathfrak h$ is $2$-step nilpotent, hence any $r$ of the form $r = v_2 \otimes e_2^\ast + v_3 \otimes e^\ast_3$, with $v_2, v_3 \in \mathfrak h$, satisfies both conditions  (\ref{eq:r1}) and (\ref{eq:r2}).

\appendix

\section{Proof of Theorem \ref{theor:obst_class}}\label{app:B}

We begin remarking that, as $\nabla$ is $s$-projectable, then the product connection $\nabla \times \nabla \times \nabla$ restricts to a unique connection $\nabla^{[3]}$ in $ T\mathcal G ^{[3]} \to \mathcal G^{[3]}$. We will also need the following maps:
\[
\mathfrak F_{i,j} := \bar m \circ (\mathrm{pr}_i \times \mathrm{pr}_j) : \mathcal G^{[3]} \to \mathcal G, \quad (g_1, g_2, g_3) \mapsto g_ig_j^{-1}, \quad i,j = 1,2,3.
\]
Clearly we have
\[
\bar m \circ \left(\mathfrak F_{1,3} \times \mathfrak F_{2,3}\right) = \mathfrak F_{1,2}.
\]

Now, consider the cochain $\bar \delta \mathsf{At}_\nabla : T^{0,2}\mathcal G^{[3]} \to  T\mathcal G $. According to (\ref{eq:delta_bar}) it is given by
\begin{equation}\label{eq:deltaT_nabla}
\begin{aligned}
 & \bar \delta \mathsf{At}_\nabla (v^\otimes_1, v^\otimes_2, v^\otimes_3) \\
 &  = - d \bar m \left( \mathsf{At}_\nabla (v^\otimes_1, v^\otimes_3), \mathsf{At}_\nabla (v^\otimes_2, v^\otimes_3) \right)   + \mathsf{At}_\nabla (v^\otimes_1, v^\otimes_2) \\  & \quad
 - \mathsf{At}_\nabla \left( d \bar m{}^{\otimes 2} (v^\otimes_1, v^\otimes_3) , d \bar m{}^{\otimes 2} (v^\otimes_2, v^\otimes_3) \right), \quad \quad (v^\otimes_1, v^\otimes_2, v^\otimes_3) \in T^{0,2} \mathcal G^{[3]}.
\end{aligned}
\end{equation}
But it can also be viewed as a module homomorphism
\[
\mathcal S_\nabla : \Omega^1 (\mathcal G) \to \mathfrak T^{0,2} (\mathcal G^{[3]})
\]
covering the algebra homomorphism $\mathfrak F^\ast_{1,2} : C^\infty (\mathcal G) \to C^\infty (\mathcal G^{[3]})$ via
\begin{equation}\label{eq:S_nabla}
\big\langle \mathcal S_\nabla (\theta), (v^\otimes_1, v^\otimes_2, v^\otimes_3) \big\rangle = \big\langle \theta, \bar \delta \mathsf{At}_\nabla (v^\otimes_1, v^\otimes_2, v^\otimes_3) \big\rangle, \quad \theta \in \Omega^1 (\mathcal G). 
\end{equation}
We want to show that $\mathcal S_\nabla = 0$. A direct computation exploiting (\ref{eq:S_nabla}), (\ref{eq:deltaT_nabla}) and the definition of $\mathsf{At}_\nabla$ shows that:
\[
\begin{aligned}
& \big\langle \mathcal S_\nabla (\theta), (v^\otimes_1, v^\otimes_2, v^\otimes_3) \big\rangle \\
& = \left\langle \left((\mathfrak F_{1,3}  \times \mathfrak F_{2,3})^\ast \circ \nabla^{[2]} \circ \bar m{}^\ast - \nabla^{[3]} \circ \mathfrak F_{1,2}^\ast \right) \theta , (v^\otimes_1, v^\otimes_2, v^\otimes_3) \right\rangle \\
&\quad  - \big\langle m{}^\ast \theta , \big(\mathsf{At}_\nabla (v^\otimes_1, v^\otimes_3), \mathsf{At}_\nabla (v^\otimes_2, v^\otimes_3) \big)\big\rangle
\end{aligned}.
\]
Hence
\[
\mathcal S_\nabla = \mathfrak U - \mathfrak V \circ \bar m{}^\ast
\]
where 
\[
\mathfrak U :=  (\mathfrak F_{1,3}  \times \mathfrak F_{2,3})^\ast \circ \nabla^{[2]} \circ \bar m{}^\ast - \nabla^{[3]} \circ \mathfrak F_{1,2}^\ast
\]
and $\mathfrak V : \Omega^1 (\mathcal G^{[2]}) \to \mathfrak T^{0,2} (\mathcal G^{[3]})$ is the module homomorphism covering the algebra homomorphism 
\[
(\mathfrak F_{1,3} \times \mathfrak F_{2,3})^\ast : C^\infty (\mathcal G^{[2]}) \to C^\infty (\mathcal G^{[3]})
\]
 given by
\[
 \big\langle \mathfrak V (\Theta), (v^\otimes_1, v^\otimes_2, v^\otimes_3) \big\rangle
 = \big\langle \Theta , \big(\mathsf{At}_\nabla (v^\otimes_1, v^\otimes_3), \mathsf{At}_\nabla (v^\otimes_2, v^\otimes_3) \big)\big\rangle, \quad \Theta \in \Omega^1 (\mathcal G^{[2]}).
\]
We have to prove that $\mathfrak U = \mathfrak V \circ \bar m{}^\ast$. To do this, first notice that $\mathfrak V$ is completely determined by the compositions $\mathfrak V \circ \mathrm{pr}_i^\ast : \Omega^1 (\mathcal G) \to \mathfrak T^{0,2} (\mathcal G^{[3]})$, $i = 1,2$. Now, for all $\theta \in \Omega^1 (\mathcal G)$, we have 
\[
\begin{aligned}
&\big\langle \mathfrak V (\mathrm{pr}_1^\ast \theta), (v^\otimes_1, v^\otimes_2, v^\otimes_3) \big\rangle \\
& = \big\langle \theta , \mathsf{At}_\nabla (v^\otimes_1, v^\otimes_3)\big\rangle
 =  \big\langle \left(\bar m{}^\ast \circ \nabla \theta - \nabla^{[2]} \circ \bar m{}^\ast \right) \theta , (v^\otimes_1, v^\otimes_3)  \big\rangle \\
 & = \big\langle \left(\mathrm{pr}_1 \times \mathrm{pr}_3\right)^\ast \left(\bar m{}^\ast \circ \nabla \theta - \nabla^{[2]} \circ \bar m{}^\ast \right) \theta, (v^\otimes_1, v^\otimes_2, v^\otimes_3)  \big\rangle \\
 & = \big\langle \left(\mathfrak F_{1,3}^\ast \circ \nabla \theta - \nabla^{[3]} \circ \mathfrak F_{1,3}^\ast  \right) \theta, (v^\otimes_1, v^\otimes_2, v^\otimes_3)  \big\rangle
 \end{aligned},
\]
hence
\[
\begin{aligned}
\mathfrak V \circ \mathrm{pr}_1^\ast & =  \mathfrak F_{1,3}^\ast \circ \nabla - \nabla^{[3]} \circ \mathfrak F_{1,3}^\ast \\
& = \left( \left(\mathfrak F_{1,3} \times \mathfrak F_{2,3}\right)^\ast \circ \nabla^{[2]} - \nabla^{[3]} \circ  \left(\mathfrak F_{1,3} \times \mathfrak F_{2,3}\right)^\ast \right) \circ \mathrm{pr}_1^\ast.
\end{aligned}
\]
Similarly
\[
\mathfrak V \circ \mathrm{pr}_2^\ast = \left( \left(\mathfrak F_{1,3} \times \mathfrak F_{2,3}\right)^\ast \circ \nabla^{[2]} - \nabla^{[3]} \circ  \left(\mathfrak F_{1,3} \times \mathfrak F_{2,3}\right)^\ast \right) \circ \mathrm{pr}_2^\ast .
\]
We conclude that 
\[
\mathfrak V = \left(\mathfrak F_{1,3} \times \mathfrak F_{2,3}\right)^\ast \circ \nabla^{[2]} - \nabla^{[3]} \circ  \left(\mathfrak F_{1,3} \times \mathfrak F_{2,3}\right)^\ast
\]
and therefore
\[
\begin{aligned}
\mathfrak V \circ \bar m{}^\ast & = \left(\mathfrak F_{1,3} \times \mathfrak F_{2,3}\right)^\ast \circ \nabla^{[2]} \circ \bar m{}^\ast - \nabla^{[3]} \circ  \left(\mathfrak F_{1,3} \times \mathfrak F_{2,3}\right)^\ast \circ \bar m{}^\ast \\
& = \left(\mathfrak F_{1,3} \times \mathfrak F_{2,3}\right)^\ast \circ \nabla^{[2]} \circ \bar m{}^\ast - \nabla^{[3]} \circ \mathfrak F_{1,2}^\ast = \mathfrak U.
\end{aligned}
\]
This proves (i).

Next take two $s$-projectable connections $\nabla, \nabla'$ in 
$ T\mathcal G  \to \mathcal G$. Their difference $\nabla - \nabla'$
 is an $s$-projectable $(2,1)$-tensor on $\mathcal G$, which can 
 be seen as a $0$-cochain $\mathcal U$ in $\bar C_{\mathrm{def}}
 (\mathcal G, T^{2,0})$. As mentioned in Remark \ref{rem:delta_2,1_tensors}, the differential $\bar \delta \mathcal U$, 
 seen as a module homomorphism $\Omega^1 (\mathcal G) 
 \to \mathfrak T^{2,0}(\mathcal G^{[2]})$, agrees with the combination
\[
\mathcal S = \bar m{}^\ast \circ \left(\nabla - \nabla'\right) - \left(\nabla - \nabla' \right)^{[2]} \circ \bar m{}^\ast.
\]
Now, from the easy formula
\begin{equation}\label{eq:12f34}
\big(\nabla - \nabla'\big) \times \big(\nabla - \nabla'\big) = \nabla \times \nabla - \nabla' \times \nabla',
\end{equation}
we get $(\nabla - \nabla')^{[2]} = \nabla^{[2]} - \nabla'{}^{[2]}$. Hence
\[
\mathcal S = \left( \bar m{}^\ast \circ \nabla - \nabla^{[2]} \circ \bar m{}^\ast\right) - \left(\bar m{}^\ast \circ \nabla' - \nabla'{}^{[2]} \circ \bar m{}^\ast\right).
\]
Putting everything together we conclude that
\[
\bar \delta \mathcal U = \mathsf{At}_\nabla - \mathsf{At}_{\nabla'}.
\]
This proves (ii).

Finally, suppose that there exists a multiplicative connection $\nabla$ in $ T\mathcal G  \to \mathcal G$. Then, from Proposition \ref{prop:mult_conn_div}, $\mathsf{At}_\nabla$, hence its cohomology class, vanishes. Conversely, the vanishing of the cohomology class $[\mathsf{At}_\nabla]$ means that there exists a $(2,1)$-tensor $U$ on $\mathcal G$ such that, when we see it as a map $U : \Omega^1 (\mathcal G) \to \mathfrak T^{2,0} (\mathcal G)$, we have
\[
\bar m{}^\ast \circ \nabla - \nabla^{[2]} \circ \bar m{}^\ast = \bar m{}^\ast \circ U - U^{[2]} \circ \bar m{}^\ast
\]
(see Remark  \ref{rem:delta_2,1_tensors} again). Hence, from (\ref{eq:12f34}) (in the case $\nabla' = \nabla - U$), we find
\[
\bar m{}^\ast \circ (\nabla-U) - (\nabla-U)^{[2]} \circ \bar m{}^\ast = 0,
\]
i.e. $\nabla - U$ is a multiplicative connection.

\section{IM Connections and Homological Vector Fields}\label{app:A}

In this appendix we provide a proof of Theorem (\ref{theor:IM_spray}) exploiting some features of connections on $\mathbb NQ$-manifolds. We assume the reader is familiar with the fundamentals of graded manifolds and calculus on them and we only recall that an $\mathbb N$-manifold of degree $n$ is a graded manifold $\mathcal M$ whose coordinates are concentrated in degree $0, \ldots, n$. If $\mathcal M$ is  additionally equipped with a \emph{$Q$-structure}, i.e. a degree $1$-vector field $Q$ such that $[Q,Q] = 0$, we call it a (degree $n$) $\mathbb N Q$-manifold. For instance, if $A \Rightarrow M$ is a Lie algebroid, then its de Rham differential $d_A$ is a $Q$-structure on $A[1]$, the graded manifold obtained from $A$ by shifting by one the degree in the fiber coordinates, and $(A[1], d_A)$ is a degree $1$ $\mathbb N Q$-manifold. All degree $1$ $\mathbb N Q$-manifolds arise in this way.

We begin with a discussion on tensors on degree $1$ $\mathbb N Q$-manifolds. Considering vector valued forms will suffice. Our convention on differential forms on graded manifolds is the following: a differential $k$-form on a graded manifold $\mathcal M$ is a graded skew-symmetric $C^\infty (\mathcal M)$-multilinear map
$
\mathfrak X (\mathcal M) \times \cdots \times \mathfrak X (\mathcal M) \to C^\infty (\mathcal M)
$. 
This means that the wedge product of forms obeys the following commutativity rule: if $\omega$ is a $k$-form of (internal, coordinate) degree $|\omega|$ and $\omega'$ is a $k'$-form of degree $|\omega'|$ then 
$
\omega \wedge \omega' = (-)^{|\omega||\omega'| + kk'} \omega' \wedge \omega
$.
Our convention on vector valued forms is similar. Now, let $\mathcal M$ be a degree $1$ $\mathbb N$-manifold. So $\mathcal M = A[1]$ for some ordinary (non graded) vector bundle $A \to M$. There are no non-trivial vector valued forms on $A[1]$ of degree $-2$ or lower. Similarly as in the ordinary case, degree $-1$ vector fields on $A[1]$ identify with sections of $A$ via the \emph{vertical lift} construction $\Gamma (A) \to \mathfrak X (A[1])$, $a \mapsto a^\uparrow$, where $a^\uparrow$ is the interior product with $a$ in $C^\infty (A[1]) = \Gamma (\wedge^\bullet A^\ast)$:
\[
a^\uparrow = \iota_a :  \Gamma (\wedge^\bullet A^\ast) \to  \Gamma (\wedge^{\bullet-1} A^\ast).
\] 
Degree $0$ vector fields on $A[1]$ identify with sections of the gauge algebroid $DA \Rightarrow M$. If $X$ is a degree $0$ vector field, the associated derivation $D$ is uniquely defined by
$[X, a^\uparrow] = (Da)^\uparrow$, for all $a \in \Gamma (A)$.
We will denote by $X_D$ the degree $0$ vector field on $A[1]$ corresponding to $D \in \Gamma (DA)$. If $(A[1], d_A)$ is the degree $1$ $\mathbb N Q$-manifold corresponding to a Lie algebroid $A \Rightarrow M$, then we have
$
[d_A, a^\uparrow] = X_{\operatorname{ad} a}$ for all $a \in \Gamma (A)$, where $\operatorname{ad} a$ is the derivation of $A \to M$ given by
$
\operatorname{ad} a (b) = [a, b]_A
$. In particular the symbol of the derivation $\operatorname{ad} a$ is $\sigma (\operatorname{ad} a) = \rho_A (a)$.

Degree $-1$ vector valued $p$-forms $\mathcal S$ on $A[1]$ identify with ordinary $A$-valued $p$-forms $S$ on $M$ via
\[
\mathcal S (X_{D_1}, \ldots, X_{D_p}) = S(\sigma(D_1), \ldots, \sigma (D_p))^\uparrow,
\]
for all $D_1, \ldots, D_p$ derivations of $A$ (use, e.g., local coordinates). 
In a very similar (and obvious) way $(p,1)$-tensors of degree $-1$ identify with $A$-valued $(p,0)$-tensors on $M$, i.e. sections of $T^{p,0} M \otimes A$.

Degree $0$ vector valued $p$-forms on $A[1]$ identify with linear vector valued $p$-forms on $A$. Indeed, the same formulas as for linear forms identify a degree $0$ form in $\Omega^p (A[1], TA[1])$ with a triple $(\mathcal D, l, \mathcal T^M)$ of \emph{components} hence with a linear form $\mathcal T \in  \Omega^p (A, TA)$. We will denote by $\mathcal T^{[1]}$ the degree $0$ vector valued form on $A[1]$ corresponding to a linear vector valued form $\mathcal T$ on $A$. 
Similarly, $(p,1)$-tensors of degree $0$ on $A[1]$ identify with linear $(p,1)$-tensors on $A$.

The next theorem translates in terms of $\mathcal T^{[1]}$ the property of $(\mathcal D, l, \mathcal T^M)$ of being infinitesimally multiplicative.

\begin{theorem}\label{theor:Q-forms}
Let $A \Rightarrow M$ be a Lie algebroid, and let $(A[1], d_A)$ be the associated $\mathbb N Q$-manifold of degree $1$. Let $\mathcal T$ be a linear vector valued form on $A$ with components $(\mathcal D, l, \mathcal T^M)$. The assignment $(\mathcal D, l, \mathcal T^M) \mapsto \mathcal T^{[1]}$ establishes a one-to-one correspondence between IM vector valued forms on $A$ and degree $0$ vector valued forms $\mathcal T^{[1]}$ on $A[1]$ such that
$
\mathcal L_{d_A} \mathcal T^{[1]} = 0.
$
\end{theorem}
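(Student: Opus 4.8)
The identifications recalled above already reduce the statement to a single equivalence. Since $(\mathcal D, l, \mathcal T^M) \leftrightarrow \mathcal T \leftrightarrow \mathcal T^{[1]}$ is a bijection between (components of) linear vector valued $p$-forms on $A$ and degree $0$ vector valued $p$-forms on $A[1]$, it suffices to prove that $\mathcal L_{d_A}\mathcal T^{[1]} = 0$ if and only if the triple $(\mathcal D, l, \mathcal T^M)$ satisfies the IM equations \eqref{eq:IM_tensor_1}--\eqref{eq:IM_tensor_5}. The form $\mathcal R := \mathcal L_{d_A}\mathcal T^{[1]}$ is a vector valued $p$-form of internal degree $+1$, and the plan is to decompose the vanishing of $\mathcal R$ into its \emph{components} and recognise each as one of the IM equations.

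The computational engine is the graded Cartan calculus on $A[1]$ together with the relations recalled before the statement: $[d_A, a^\uparrow] = X_{\operatorname{ad} a}$ (whence $\sigma(\operatorname{ad} a) = \rho_A(a)$, $\operatorname{ad} a(b) = [a,b]_A$ and $[X_{\operatorname{ad} a}, b^\uparrow] = [a,b]_A^\uparrow$) and the dictionary $\mathcal L_{a^\uparrow}\mathcal T^{[1]} \leftrightarrow \mathcal D(a)$, $\iota_{a^\uparrow}\mathcal T^{[1]} \leftrightarrow l(a)$. I would test the vanishing of the degree $+1$ form $\mathcal R$ by lowering its degree with the vertical lifts $a^\uparrow$. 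There are exactly three ways to lower the degree by two and land in the genuinely tensorial range (degree $-1$, i.e. $A$-valued forms on $M$): applying $\mathcal L_{a^\uparrow}\mathcal L_{b^\uparrow}$, $\mathcal L_{a^\uparrow}\iota_{b^\uparrow}$, or $\iota_{a^\uparrow}\iota_{b^\uparrow}$. Graded-commuting $\mathcal L_{d_A}$ past $\iota_{a^\uparrow}$ and $\mathcal L_{a^\uparrow}$ produces $\iota_{X_{\operatorname{ad} a}}$ and $\mathcal L_{X_{\operatorname{ad} a}}$, and expanding the resulting nested graded commutators applied to $\mathcal T^{[1]}$ gives: the Lie--Lie term equals $\mathcal D([a,b]_A) - \mathcal L_a\mathcal D(b) + \mathcal L_b\mathcal D(a)$, reproducing \eqref{eq:IM_tensor_1}; the mixed term equals $l([a,b]_A) - \mathcal L_a l(b) + \iota_{\rho_A(b)}\mathcal D(a)$, reproducing \eqref{eq:IM_tensor_2}; and the contraction--contraction term equals $\iota_{\rho_A(a)}l(b) + \iota_{\rho_A(b)}l(a)$, reproducing \eqref{eq:IM_tensor_4}. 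The lower-order components of $\mathcal R$, obtained by feeding in at most one vertical lift and otherwise base directions, reproduce the two relations \eqref{eq:IM_tensor_3} and \eqref{eq:IM_tensor_5} involving $\mathcal T^M$; by Remark \ref{rem:relations} these follow from the first three and hence add no new constraint.

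The crux of the argument is the completeness claim underlying this reduction: that a degree $+1$ vector valued $p$-form on $A[1]$ vanishes as soon as all of its components --- the three tensorial ones above and the base ones --- vanish. This is a bookkeeping statement about the $\mathbb N$-grading, proved most transparently in a local frame $(x^i, \xi^\alpha)$ adapted to the grading (with $\deg x^i = 0$, $\deg \xi^\alpha = 1$), where one checks that the listed contractions exhaust the coefficients of $\mathcal R$; this is the ``easy in local coordinates'' type step used repeatedly in the paper. The genuine labor lies in the graded-sign bookkeeping: the Koszul signs produced when commuting $d_A$ (degree $+1$) past the operators $\iota_{a^\uparrow}$ and $\mathcal L_{a^\uparrow}$ (degree $-1$) are precisely what convert the a priori symmetric-looking outputs into the skew-symmetrized right-hand sides of \eqref{eq:IM_tensor_1}--\eqref{eq:IM_tensor_5}, so they must be tracked with care. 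By contrast, the first-order Leibniz anomaly \eqref{eq:calD} of $\mathcal D$ needs no separate check: it is built into the very fact that $\mathcal T^{[1]}$ is a bona fide $C^\infty(A[1])$-multilinear vector valued form, so that $\mathcal L_{a^\uparrow}\mathcal T^{[1]}$ automatically carries the correct symbol. I therefore expect the sign conventions, rather than any conceptual gap, to be the main obstacle.
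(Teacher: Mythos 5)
Your proposal is correct and follows essentially the same route as the paper's proof: both test $\mathcal L_{d_A}\mathcal T^{[1]}$ against the generators of $\mathfrak X(A[1])$ in degrees $-1$ and $0$ (vertical lifts $a^\uparrow$ and fields $X_D$), identify the three resulting tensorial components with (\ref{eq:IM_tensor_4}), (\ref{eq:IM_tensor_2}) and (\ref{eq:IM_tensor_1}) (each modulo the previous ones), and invoke Remark \ref{rem:relations} to dispose of (\ref{eq:IM_tensor_3}) and (\ref{eq:IM_tensor_5}). The only cosmetic discrepancy is your mention of ``lower-order components'' reproducing (\ref{eq:IM_tensor_3}) and (\ref{eq:IM_tensor_5}): in the paper there are no such extra components --- the three you list already exhaust $\mathcal L_{d_A}\mathcal T^{[1]}$, and those two identities enter only as consequences of the others, exactly as you conclude.
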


\begin{proof}
We provide a proof for $2$-forms. For general $p$-forms the proof is similar and it is left to the reader. So let $\mathcal T \in \Omega^2 (A, TA)$ be a linear form, let $(\mathcal D, l, \mathcal T^M)$ be the associated components, and let $\mathcal T^{[1]} \in \Omega^2 (A[1], TA[1])$ be the degree $0$ form corresponding to $\mathcal T$. The Lie derivative $\mathcal L_{d_A} \mathcal T^{[1]}$ is a degree $1$ vector valued form. As vector fields on $A[1]$ are generated in degree $-1$ and $0$, $\mathcal L_{d_A} \mathcal T^{[1]}$ vanishes iff
\begin{align}
\mathcal L_{d_A} \mathcal T^{[1]} (a^\uparrow, b^\uparrow) & = 0, \label{eq:Q-forms1}\\
\mathcal L_{d_A} \mathcal T^{[1]} (a^\uparrow, X_D) & = 0, \label{eq:Q-forms2}\\
\mathcal L_{d_A} \mathcal T^{[1]} (X_{D_1}, X_{D_2}) & = 0, \label{eq:Q-forms3}
\end{align}
for all $a, b \in \Gamma(A)$, and all $D, D_1, D_2 \in \Gamma (DA)$. We will show that Equation (\ref{eq:Q-forms1}) is equivalent to (\ref{eq:IM_tensor_4}), Equation (\ref{eq:Q-forms2}) is equivalent to (\ref{eq:IM_tensor_2}) modulo (\ref{eq:IM_tensor_4}), and Equation (\ref{eq:Q-forms2}) is equivalent to (\ref{eq:IM_tensor_1}) modulo (\ref{eq:IM_tensor_2}) and (\ref{eq:IM_tensor_4}). In view of Remark \ref{rem:relations}, this will conclude the proof. To begin with, a direct computation shows that
\[
\mathcal L_{d_A} \mathcal T^{[1]} (a^\uparrow, b^\uparrow)  = \left(\iota_{\rho_A (a)} l(b)+ \iota_{\rho_A (b)} l(a)\right)^\uparrow,
\]
showing that Equation (\ref{eq:Q-forms1}) is equivalent to (\ref{eq:IM_tensor_4}). Next we assume (\ref{eq:Q-forms1}) and compute $\mathcal L_{d_A} \mathcal T^{[1]} (a^\uparrow, X_D)$. As $\mathcal L_{d_A} \mathcal T^{[1]} (a^\uparrow, X_D)$ is a degree $0$ vector field, it is completely determined by the commutators
\[
\left[\mathcal L_{d_A} \mathcal T^{[1]} (a^\uparrow, X_D), b^\uparrow \right]
\]
for all $b \in \Gamma (A)$. It is easy to see that, after using (\ref{eq:IM_tensor_4}),
we have
\[
\left[\mathcal L_{d_A} \mathcal T^{[1]} (a^\uparrow, X_D), b^\uparrow \right] = \iota_{\sigma (D)} \left ( \mathcal L_b l(a) + \iota_{\rho_A (a)} \mathcal D (b) - l ([a,b]_A) \right)^\uparrow.
\]
As $D$ is arbitrary, we see that (\ref{eq:Q-forms2}) is equivalent to (\ref{eq:IM_tensor_2}) modulo (\ref{eq:IM_tensor_4}).

Finally, we have to compute $\mathcal L_{d_A} \mathcal T^{[1]} (X_{D_1}, X_{D_2})$. It is a degree $1$ vector field, and it is completely determined by the nested commutators 
\[
\left[ \left[\mathcal L_{d_A} \mathcal T^{[1]} (X_{D_1}, X_{D_2}), a^\uparrow \right], b^\uparrow \right].
\]
Similarly as above
\[
\begin{aligned}
&\left[ \left[\mathcal L_{d_A} \mathcal T^{[1]} (X_{D_1}, X_{D_2}), a^\uparrow \right], b^\uparrow \right] \\
&= \iota_{\sigma (D_2)} \iota_{\sigma (D_1)} \big(\mathcal D ([a,b]_A) - \mathcal L_a \mathcal D (b) + \mathcal L_b \mathcal D (a)  \big)^\uparrow \ \text{modulo (\ref{eq:IM_tensor_2}) and (\ref{eq:IM_tensor_4})}.
\end{aligned}
\]
As $D_1, D_2$ are arbitrary, this concludes the proof.
\end{proof}

We now pass to connections. Linear connections in graded geometry are defined exactly as in the ordinary case (up to the appropriate Koszul signs). It is always assumed that the covariant derivative $\nabla_X$ along a vector field $X$ of a definite degree $k$ is an operator of definite degree. As the symbol of $\nabla_X$ is $X$ itself (by definition of linear connection) it follows that the degree of $\nabla_X$ is $k$ as well.

Similarly as for vector valued forms, linear connections in the tangent bundle $T A[1] \to A[1]$ of a degree $1$ $\mathbb N Q$-manifold identify with fiber-wise linear connections in $TA \to A$. Namely, the same formulas as for fiber-wise linear connections identify a connection in $T A[1] \to A[1]$ with a $4$-tuple $(\mathcal F, \nabla^A, \nabla^M, l)$ of components, hence with a fiber-wise linear connection $\nabla$ in $TA \to A$.  We will denote by $\nabla^{[1]}$ the connection in $TA[1] \to A[1]$ corresponding to a fiber-wise linear connection $\nabla$ in $TA \to A$.
Notice that the torsion of $\nabla^{[1]}$ agrees with $(T^\nabla)^{[1]}$: the vector valued $2$-form on $A[1]$ corresponding to the torsion $T^\nabla$ of $\nabla$. 

\begin{theorem}\label{theor:Q-conn}
Let $A \Rightarrow M$ be a Lie algebroid, ad let $(A[1], d_A)$ be the associated $\mathbb N Q$-manifold of degree $1$. Let $\nabla$ be a fiber-wise linear connection in $TA \to A$ with components $(\mathcal F, \nabla^A, \nabla^M, l)$. The assignment $(\mathcal F, \nabla^A, \nabla^M, l) \mapsto \nabla^{[1]}$ establishes a one-to-one correspondence between IM connections in $TA \to A$ and connections $\nabla^{[1]}$ in $TA[1] \to A[1]$ such that $\mathcal L_{d_A} \nabla^{[1]} = 0$.
\end{theorem}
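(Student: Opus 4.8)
The assignment $(\mathcal F, \nabla^A, \nabla^M, l) \mapsto \nabla^{[1]}$ is already known to be a bijection between all fiber-wise linear connections in $TA \to A$ and all connections in $TA[1] \to A[1]$ (this was recorded just before the theorem), so the only thing to prove is that, under this bijection, the components form an IM connection, i.e.\ satisfy \ref{eq:IM_conn_1}--\ref{eq:IM_conn_4}, if and only if $\mathcal L_{d_A}\nabla^{[1]} = 0$. The plan is to mimic the proof of Theorem \ref{theor:Q-forms}. The structural fact making this work is that the Lie derivative of a linear connection along a vector field is tensorial (this holds verbatim in the graded setting, up to Koszul signs), so $\mathcal L_{d_A}\nabla^{[1]}$ is a degree $1$ tensor of type $(1,2)$, i.e.\ an $\End$-valued $1$-form on $A[1]$. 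Being $C^\infty(A[1])$-multilinear, it vanishes iff it vanishes on all pairs of generators of $\mathfrak X(A[1])$, which are the degree $-1$ fields $a^\uparrow$ and the degree $0$ fields $X_D$, $D \in \Gamma(DA)$. The essential computational inputs are $[d_A, a^\uparrow] = X_{\ad a}$ with $\sigma(\ad a) = \rho_A(a)$, the vanishing $\nabla^{[1]}_{a^\uparrow} b^\uparrow = 0$ (there are no degree $-2$ vector fields on $A[1]$), and the component formulas reading $\nabla^A$, $\mathcal F$ and $l$ off of $\nabla^{[1]}$.

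First I would evaluate $\mathcal L_{d_A}\nabla^{[1]}$ on the various generator pairs $(Y;Z)$, with $Y$ the direction and $Z$ the differentiated field, and, since the output is itself a vector field of non-negative degree in the mixed and degree-$0$ cases, probe it by nested commutators with vertical lifts until landing in degree $-1 = \Gamma(A)$, exactly as the forms proof probes with $[\,\cdot\,,a^\uparrow]$. I expect the following dictionary. The case $(a^\uparrow; b^\uparrow)$ already lands in $\Gamma(A)$ and, using $\nabla^{[1]}_{a^\uparrow}b^\uparrow = 0$ together with $[d_A,a^\uparrow]=X_{\ad a}$, reduces to \ref{eq:IM_conn_1}. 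A mixed case $(a^\uparrow; X_D)$, resp.\ $(X_D; a^\uparrow)$, probed once by $b^\uparrow$, reduces to \ref{eq:IM_conn_2} modulo \ref{eq:IM_conn_1}, while its torsion (antisymmetric) content reproduces \ref{eq:IM_conn_4}. Finally the case $(X_{D_1}; X_{D_2})$, probed twice by $a^\uparrow$ and $b^\uparrow$, reduces to \ref{eq:IM_conn_3} modulo the previous identities. In each case the number of degree-$0$ slots matches the number of free directions appearing in the corresponding IM equation, and the sections $a,b$ are supplied partly by vertical slots and partly by the probing commutators; this is why the identifications hold only modulo the lower ones, just as in Theorem \ref{theor:Q-forms}.

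A useful cross-check and labor-saving shortcut is that $\mathcal L_{d_A}$ commutes with antisymmetrization and annihilates the bracket term, by the graded Jacobi identity for $d_A$, so the antisymmetric part of $\mathcal L_{d_A}\nabla^{[1]}$ equals $\mathcal L_{d_A}(T^\nabla)^{[1]}$, the Lie derivative of the torsion. By Theorem \ref{theor:Q-forms} the latter vanishes iff the torsion components $(\mathcal D, l, T^M)$, with $\mathcal D$ as in \ref{eq:IM_torsion}, form an IM vector valued $2$-form; this already supplies \ref{eq:IM_conn_4} together with \ref{stanco} and the consequence \ref{eq:IM_tensor_1} of \ref{eq:IM_conn_3}, so only \ref{eq:IM_conn_1}, \ref{eq:IM_conn_2} and \ref{eq:IM_conn_3} need be extracted from the case analysis of the previous paragraph. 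Running every equivalence backwards yields the converse implication, and since the underlying correspondence of components with connections is a bijection, the two distinguished subclasses correspond, proving the theorem.

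The hardest part is not any single identity but the correct reading of the component formulas in the degree-$0$ direction: $\nabla^{[1]}_{X_D} b^\uparrow$ involves a genuine derivation $D \in \Gamma(DA)$, and one must separate the contribution of its symbol $\sigma(D)$, which feeds $\nabla^A_{\sigma(D)}$ and $\nabla^M$, from that of its vertical, $\End A$-valued part, exactly as in the identification of degree-$0$ tensors on $A[1]$ with their linear counterparts on $A$. A secondary difficulty, absent from Theorem \ref{theor:Q-forms}, is that the output of $\mathcal L_{d_A}\nabla^{[1]}$ is endomorphism-valued rather than vector-valued, so each case costs one extra probing commutator and correspondingly more care with Koszul signs. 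Once the component dictionary and the signs are fixed, the individual computations are routine, so I would present them as such and leave the verification to the reader, in keeping with the style of Theorem \ref{theor:Q-forms}.
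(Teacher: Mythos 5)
Your proposal is correct and follows essentially the same route as the paper's proof: identify $\mathcal L_{d_A}\nabla^{[1]}$ as a tensor of degree $1$, evaluate it on the generator pairs built from the $a^\uparrow$ and the $X_D$, probe the results with nested commutators against vertical lifts until landing in $\Gamma(A)$, and match the four resulting identities with (\ref{eq:IM_conn_1})--(\ref{eq:IM_conn_4}), each modulo the preceding ones. Your extra observation that the antisymmetric part of $\mathcal L_{d_A}\nabla^{[1]}$ is $\mathcal L_{d_A}(T^\nabla)^{[1]}$, so that Theorem \ref{theor:Q-forms} already supplies the torsion identities, is a sound labor-saving remark that the paper does not use but that is consistent with its argument.
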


\begin{proof}
The proof of the present theorem
is very similar to that of Theorem \ref{theor:Q-forms}.
The Lie derivative $\mathcal L_{d_A} \nabla^{[1]}$ is a degree $(2,1)$-tensor of degree $1$. It vanishes iff
\begin{align}
\mathcal L_{d_A} \nabla^{[1]} (a^\uparrow, b^\uparrow) & = 0, \label{eq:Q-conn1}\\
\mathcal L_{d_A} \nabla^{[1]} (X_D, a^\uparrow) & = 0, \label{eq:Q-conn2}\\
\mathcal L_{d_A} \nabla^{[1]} (a^\uparrow, X_D) & = 0, \label{eq:Q-conn3}\\
\mathcal L_{d_A} \nabla^{[1]} (X_{D_1}, X_{D_2}) & = 0, \label{eq:Q-conn4}
\end{align}
for all $a, b \in \Gamma(A)$, and all $D, D_1, D_2 \in \Gamma (DA)$.
An easy computation shows that
\[
\mathcal L_{d_A} \nabla^{[1]} (a^\uparrow, b^\uparrow)  = \left([a,b]_A - \nabla^A_{\rho_A(a)} b + \nabla^A_{\rho_A(b)} a + \iota_{\rho_A (b)} l(a)\right)^\uparrow,
\]
whence Equation (\ref{eq:Q-conn1}) is equivalent to (\ref{eq:IM_conn_1}). Next, as $\mathcal L_{d_A} \nabla^{[1]} (X_D, a^\uparrow)$ is a degree $0$ vector field, it is completely determined by the commutators
\[
\left[\mathcal L_{d_A} \nabla^{[1]} (X_D, a^\uparrow), b^\uparrow \right]
\]
for all $b \in \Gamma (A)$, and a direct computation shows that
\[
\left[\mathcal L_{d_A} \nabla^{[1]} (X_D, a^\uparrow), b^\uparrow \right] = \left(\mathcal L_{b}\nabla^A (\sigma (D), a) - \mathcal F(b) (\sigma (D), \rho_A(a))\right)^\uparrow\ \text{modulo (\ref{eq:IM_conn_1})}.
\]
It follows from the arbitrariness of $D$, that (\ref{eq:Q-conn2}) is equivalent to (\ref{eq:IM_conn_2}) modulo (\ref{eq:IM_conn_1}). Similarly,
\[
\left[\mathcal L_{d_A} \nabla^{[1]} (a^\uparrow, X_D), b^\uparrow \right]= \iota_{\sigma (D)}\big(l ([b,a]_A) - \mathcal L_b l (a) + \iota_{\rho_A(a)} \mathcal D (b)\big)^\uparrow\ \text{modulo (\ref{eq:IM_conn_1}) and (\ref{eq:IM_conn_2})}.
\]
Hence, (\ref{eq:Q-conn3}) is equivalent to (\ref{eq:IM_conn_4}) modulo (\ref{eq:IM_conn_1}) and (\ref{eq:IM_conn_2}). Finally, $\mathcal L_{d_A} \nabla^{[1]} (X_{D_1}, X_{D_2}) $ is a degree $1$ vector field, and it is completely determined by the nested commutators 
\[
\left[ \left[\mathcal L_{d_A} \mathcal T^{[1]} (X_{D_1}, X_{D_2}), a^\uparrow \right], b^\uparrow \right].
\]
A long but easy computation now shows that
\[
\begin{aligned}
&\left[ \left[\mathcal L_{d_A} \mathcal T^{[1]} (X_{D_1}, X_{D_2}), a^\uparrow \right], b^\uparrow \right] \\
&= \iota_{\sigma (D_2)} \iota_{\sigma (D_1)} \big(\mathcal F ([a,b]_A) - \mathcal L_a \mathcal F (b) + \mathcal L_b \mathcal F (a)  \big)^\uparrow \ \text{modulo  (\ref{eq:IM_conn_1}), (\ref{eq:IM_conn_2}) and (\ref{eq:IM_conn_4})}.
\end{aligned}
\]
As $D_1, D_2$ are arbitrary, this concludes the proof.
\end{proof}

Theorem \ref{theor:Q-conn} allows us to identify the exact obstruction to the existence of IM connections in the tangent bundle of a Lie algebroid. Namely, let $A \Rightarrow M$ be a Lie algebroid. The graded space $\mathfrak T^{2,1} (A[1])$ of $(2,1)$ tensors on $A[1]$ is a cochain complex when equipped with the Lie derivative $\mathcal L_{d_A}$ along the $Q$-structure $d_A$, and we have the following infinitesimal version of Theorem \ref{theor:obst_class}:

\begin{theorem}[Mehta, Sti\'enon, Xu \cite{MSX2015}]\label{theor:inf_obst_class}
Let $A \Rightarrow M$ be a Lie algebroid and pick any fiber-wise linear connection $\nabla$ in $TA$. Consider the $(2,1)$ tensor $\mathsf{At}_\nabla := \mathcal L_{d_A} \nabla^{[1]} \in \mathfrak T^{2,1} (A[1])$. Then
\begin{enumerate}
\item $\mathsf{At}_\nabla$ is a $\mathcal L_{d_A}$-cocycle, i.e. $\mathcal L_{d_A}\mathsf{At}_\nabla = 0$;
\item The cohomology class $[\mathsf{At}_\nabla] \in H^1 (\mathfrak T^{2,1} (A[1]), \mathcal L_{d_A})$ is independent of the choice of $\nabla$;
\item The cohomology class $[\mathsf{At}_\nabla]$ vanishes iff there exists an IM connection in $TA$.
\end{enumerate}
\end{theorem}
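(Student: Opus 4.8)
The plan is to treat this as a graded \emph{Atiyah-class argument} carried out entirely on the $Q$-manifold $A[1]$, with Theorem~\ref{theor:Q-conn} used only at the very end to translate the conclusion back into the language of IM connections. Three structural facts do all the work. First, $\mathcal L_{d_A}$ is a differential on the graded module $\mathfrak T^{2,1}(A[1])$. Second, fiber-wise linear connections in $TA$ form an affine space modelled on the linear $(2,1)$-tensors on $A$; on $A[1]$ this says that degree $0$ connections on $TA[1]$ form an affine space over the degree $0$ part of $\mathfrak T^{2,1}(A[1])$, and that the Lie derivative $\nabla^{[1]} \mapsto \mathcal L_{d_A}\nabla^{[1]}$ is compatible with this affine structure. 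Third, Theorem~\ref{theor:Q-conn} identifies IM connections with those degree $0$ connections $\nabla^{[1]}$ for which $\mathcal L_{d_A}\nabla^{[1]} = 0$.

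For item (1) I would first record that $\mathsf{At}_\nabla = \mathcal L_{d_A}\nabla^{[1]}$ is a genuine $(2,1)$-tensor of degree $1$, since the Lie derivative of a connection along a vector field is tensorial (it is the infinitesimal analogue of the difference of two connections). It then remains to apply $\mathcal L_{d_A}$ once more. Here I would invoke the graded-commutator identity for the action of vector fields on the affine space of connections,
\[
\mathcal L_X\big(\mathcal L_Y \nabla^{[1]}\big) - (-)^{|X||Y|}\mathcal L_Y\big(\mathcal L_X\nabla^{[1]}\big) = \mathcal L_{[X,Y]}\nabla^{[1]},
\]
and specialise to $X = Y = d_A$. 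Since $d_A$ is odd, the left-hand side becomes $2\,\mathcal L_{d_A}^2\nabla^{[1]}$, while the right-hand side is $\mathcal L_{[d_A,d_A]}\nabla^{[1]} = 0$ because $[d_A,d_A]=0$. Hence $\mathcal L_{d_A}\mathsf{At}_\nabla = 0$, and in particular $\mathcal L_{d_A}$ is a differential, so that the class $[\mathsf{At}_\nabla] \in H^1(\mathfrak T^{2,1}(A[1]), \mathcal L_{d_A})$ is well defined.

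For items (2) and (3) I would exploit the affine structure. Given two fiber-wise linear connections $\nabla, \nabla'$, their difference $U^{[1]} := \nabla^{[1]} - \nabla'^{[1]}$ is a degree $0$ element of $\mathfrak T^{2,1}(A[1])$, and compatibility of $\mathcal L_{d_A}$ with the affine structure gives $\mathsf{At}_\nabla - \mathsf{At}_{\nabla'} = \mathcal L_{d_A}U^{[1]}$, a coboundary; this proves (2). For (3), the vanishing of $[\mathsf{At}_\nabla]$ is by definition the existence of a degree $0$ tensor $U^{[1]}$ with $\mathsf{At}_\nabla = \mathcal L_{d_A}U^{[1]}$. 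If such $U^{[1]}$ exists, then $\widetilde\nabla^{[1]} := \nabla^{[1]} - U^{[1]}$ is again a degree $0$ connection (a connection minus a degree $0$ tensor), hence of the form $\widetilde\nabla^{[1]}$ for a fiber-wise linear connection $\widetilde\nabla$ in $TA$, and $\mathcal L_{d_A}\widetilde\nabla^{[1]} = \mathsf{At}_\nabla - \mathcal L_{d_A}U^{[1]} = 0$; by Theorem~\ref{theor:Q-conn}, $\widetilde\nabla$ is an IM connection. Conversely, if an IM connection exists I would simply use it to compute the class: by Theorem~\ref{theor:Q-conn} its associated $\nabla^{[1]}$ satisfies $\mathcal L_{d_A}\nabla^{[1]} = 0$, so $\mathsf{At}_\nabla = 0$ on the nose, and by (2) the class vanishes for every choice of connection.

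The argument is soft once Theorem~\ref{theor:Q-conn} is available, so I do not anticipate any genuinely hard computational step; the content is the interplay between the differential $\mathcal L_{d_A}$ and the affine structure of connections. The points that require care are the bookkeeping ones: checking that the Lie derivative of a connection along $d_A$ is tensorial and affine-compatible (which legitimises both the graded-commutator identity and the identity $\mathsf{At}_\nabla - \mathsf{At}_{\nabla'} = \mathcal L_{d_A}U^{[1]}$), and verifying that subtracting a degree $0$ coboundary from a degree $0$ connection again yields a fiber-wise linear connection, so that Theorem~\ref{theor:Q-conn} applies. I expect the parity-sign bookkeeping in the graded-commutator identity to be the most error-prone part.
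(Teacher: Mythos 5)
Your proposal is correct, but it is worth noting that the paper does not actually prove this theorem: it simply defers to \cite{MSX2015}, where the statement is established in the more general setting of the Atiyah class of a dg-vector bundle. So you have supplied a self-contained argument where the paper supplies a citation. Your argument is the natural one, and it runs structurally parallel to the paper's proof of the \emph{global} analogue (Theorem \ref{theor:obst_class} in Appendix \ref{app:B}): cocycle condition, coboundary under change of connection, vanishing of the class iff a ``flat'' object exists. The difference is that the groupoid-level proof is a lengthy direct computation with the division map, whereas your infinitesimal proof is soft, everything being absorbed into the graded identity $2\mathcal L_{d_A}^2 = \mathcal L_{[d_A,d_A]} = 0$ and the affine structure. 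Two points do require (and survive) verification. First, the graded commutator identity you invoke is usually stated for Lie derivatives acting on tensors, whereas here the innermost object is a connection; the identity still holds, by the same computation one does for tensors (the inhomogeneous terms in $\mathcal L_X\nabla$ cancel pairwise in the graded antisymmetrization and the remaining terms assemble via the graded Jacobi identity), so $\mathcal L_{d_A}\mathsf{At}_\nabla = \tfrac12\mathcal L_{[d_A,d_A]}\nabla^{[1]} = 0$ as you claim. Second, in item (3) you need that every degree~$0$ element of $\mathfrak T^{2,1}(A[1])$ is of the form $U^{[1]}$ for a linear $(2,1)$-tensor $U$ on $A$, so that $\nabla^{[1]} - U^{[1]}$ is again the image of a fiber-wise linear connection; this identification is exactly the one recorded in Appendix \ref{app:A} of the paper, so Theorem \ref{theor:Q-conn} applies and closes the argument.
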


\begin{proof}
The proof is discussed in a more general setting in \cite{MSX2015}. In that paper,  $[\mathsf{At}_\nabla]$ is called the \emph{Atiyah class} of the $Q$-manifold $(A[1], d_A)$. This motivates our notation.
\end{proof}

We are now ready to prove Theorem (\ref{theor:IM_spray}). 

\begin{proof}[Proof of Theorem \ref{theor:IM_spray}]
Let $\nabla$ be a fiber-wise linear connection in the tangent bundle $TA \to A$ of a Lie algebroid $A \Rightarrow M$. 
First we show that (1) $\Rightarrow$ (2). 
We do that showing that (2) is equivalent to $\mathcal L_{d_A} \nabla^{[1]} = 0$, and then using Theorem \ref{theor:Q-conn}. It is enough to assume that $\nabla$ is a symmetric (i.e. torsion free) connection. We need a premise on the geodesic spray of a connection in the tangent bundle of a graded manifold.

First of all, let $N$ be an ordinary (non-graded) manifold, and let $\nabla$ be a connection in $TN \to N$. The geodesic spray $Z^\nabla \in \mathfrak X(TN)$ is a vector field of degree $1$ wrt the action $h$ of $(\mathbb R, \cdot)$ on $TN$ given by the fiber-wise scalar multiplication, i.e. $h_t^\ast Z^\nabla = tZ^\nabla$, for all $t \neq 0$, and it is completely determined by the following identity:
\begin{equation}\label{eq:gspray, alg}
[[Z^\nabla, X^\uparrow], Y^\uparrow] = - \left(\nabla_X Y + \nabla_Y X\right)^\uparrow
\end{equation}
for all $X,Y \in \mathfrak X (M)$, where $X^\uparrow \in \mathfrak X (TN)$ is the vertical lift of $X$. We can also relate the Lie derivative of $\nabla$ along a vector field $Q \in \mathfrak X (N)$ with the commutator $[Q^{\mathrm{tan}}, Z^\nabla]$, where $Q^{\mathrm{tan}}$ is the tangent lift of $Q$. Namely, it follows from (\ref{eq:gspray, alg}) and the identity $[Q^{\mathrm{tan}}, X^\uparrow] = [Q, X]^\uparrow$ that
\begin{equation}\label{eq:[Q,G]}
[[Q^{\mathrm{tan}}, Z^\nabla], X^\uparrow], Y^\uparrow] = - \left(\mathcal L_Q \nabla (X, Y) + \mathcal L_Q \nabla (Y,X) \right)^\uparrow
\end{equation}
for all $X,Y \in \mathfrak X (M)$. Equation (\ref{eq:[Q,G]}) shows, in particular, that $\mathcal L_{Q}\nabla^{\mathrm{sym}} = 0$ iff $[Q^{\mathrm{tan}}, Z^\nabla] = 0$.

Formula (\ref{eq:gspray, alg}) allows us to define a geodesic spray for a linear connection in the tangent bundle of any graded manifold. Namely, we have the following
\begin{lemma}
Let $\mathcal M$ be a graded manifold, let $h : \mathbb R \times T \mathcal M \to T \mathcal M$ be the scalar multiplication in the fibers of $T\mathcal M \to \mathcal M$, and let $\nabla$ be a connection in $T \mathcal M \to \mathcal M$. There exists a unique vector field $Z^\nabla$ on $T \mathcal M$ with the following properties:
\begin{enumerate}
\item $Z^\nabla$ has internal (coordinate) degree $0$,
\item $h_t^\ast Z^\nabla = t Z^\nabla$ for all $t \neq 0$,
\item Equation (\ref{eq:gspray, alg}) holds for all $X, Y \in \mathfrak X (\mathcal M)$, where $X^\uparrow, Y^\uparrow \in \mathfrak X (T\mathcal M)$ are the vertical lifts of $X,Y$.
\end{enumerate}
Formula (\ref{eq:[Q,G]}) is still valid in this graded setting.
\end{lemma}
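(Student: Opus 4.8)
The plan is to reduce everything to a computation in local graded coordinates adapted to the fibrewise scaling $h$, and then to globalize using the coordinate-free character of the defining relation (\ref{eq:gspray, alg}). I would fix homogeneous local coordinates $x^a$ on $\mathcal M$ and the induced fibre coordinates $\dot x^a$ on $T\mathcal M$, with $\dot x^a$ carrying the same internal degree as $x^a$ and weight one under $h$, so that the Euler vector field generating $h$ is $E=\dot x^a\partial_{\dot x^a}$. Condition (2), $h_t^\ast Z = tZ$, says precisely that $Z$ has $h$-weight one, and combined with condition (1) this forces
\[
Z = \dot x^b\, P^a{}_b(x)\,\partial_{x^a} + \dot x^b\dot x^c\, Q^a{}_{bc}(x)\,\partial_{\dot x^a},
\]
with $P^a{}_b$ of internal degree $|a|-|b|$ and $Q^a{}_{bc}$ of internal degree $|a|-|b|-|c|$; note that only the graded-symmetric part of $Q$ in $(b,c)$ contributes, since $\dot x^b\dot x^c$ is graded-symmetric.

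For existence I would exhibit the candidate obtained from the classical geodesic spray with Koszul signs inserted: take $P^a{}_b=\delta^a_b$ and $Q^a{}_{bc}$ equal to minus the graded-symmetrization of the Christoffel symbols $\Gamma^a{}_{bc}$ of $\nabla$ (defined by $\nabla_{\partial_{x^b}}\partial_{x^c}=\Gamma^a{}_{bc}\partial_{x^a}$). Properties (1) and (2) hold by the degree count above, so the only genuine content is (\ref{eq:gspray, alg}), which I would verify by computing $[[Z,X^\uparrow],Y^\uparrow]$ directly for $X=X^a\partial_{x^a}$, $Y=Y^a\partial_{x^a}$, using $X^\uparrow=X^a\partial_{\dot x^a}$ and the fact that $\partial_{\dot x^a}$ differentiates the velocity-polynomial coefficients. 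The inner bracket $[Z,X^\uparrow]$ produces a horizontal term $-X^b P^a{}_b\partial_{x^a}$ together with a velocity-linear vertical term, and bracketing again with $Y^\uparrow$ collects two contributions: one in which $Y^\uparrow$ differentiates the velocity-linear coefficient (yielding the symmetrized $Q$, hence the $\Gamma$-part of $\nabla_XY+\nabla_YX$), and one in which the horizontal term differentiates the component functions of $Y$ (yielding the first-derivative part $X^a\partial_{x^a}Y^b$ of $\nabla_XY+\nabla_YX$). Reconciling both with the right-hand side of (\ref{eq:gspray, alg}) is where the Koszul-sign bookkeeping must be done with care, and this is the step I expect to be the main obstacle.

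Uniqueness is then clean, and it is exactly here that one sees why (\ref{eq:gspray, alg}) must be tested against \emph{all} $X,Y\in\mathfrak X(\mathcal M)$ and not merely coordinate fields. Given two solutions, their difference $W$ has the displayed form, and the double bracket $[[W,X^\uparrow],Y^\uparrow]$ is graded-symmetric in $X,Y$ (by the graded Jacobi identity together with $[X^\uparrow,Y^\uparrow]=0$) and must vanish for all $X,Y$. The terms in which the horizontal part of $[W,X^\uparrow]$ differentiates the components of $Y$ force $P^a{}_b=0$ (these are invisible for constant-coefficient coordinate fields, which is why general vector fields are needed), while the purely vertical terms force the graded-symmetric part of $Q$ to vanish; since the graded-antisymmetric part of $Q$ does not contribute to $W$ at all, we conclude $W=0$. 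Having established pointwise existence and uniqueness on charts, the coordinate-freeness of (\ref{eq:gspray, alg}), (1) and (2) guarantees that the local sprays agree on overlaps and glue to a global $Z^\nabla$.

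Finally, formula (\ref{eq:[Q,G]}) follows formally once $Z^\nabla$ is in hand: I would compute $[[[Q^{\mathrm{tan}},Z^\nabla],X^\uparrow],Y^\uparrow]$ by repeatedly applying the graded Jacobi identity to move $Q^{\mathrm{tan}}$ past the two brackets, using the identity $[Q^{\mathrm{tan}},X^\uparrow]=[Q,X]^\uparrow$ together with (\ref{eq:gspray, alg}) to rewrite the inner double bracket in terms of $\nabla$. Collecting the resulting terms reproduces $-(\mathcal L_Q\nabla(X,Y)+\mathcal L_Q\nabla(Y,X))^\uparrow$ by the very definition of the Lie derivative of a connection. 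As with existence, the only delicate point is tracking the signs produced when $Q^{\mathrm{tan}}$, $X^\uparrow$ and $Y^\uparrow$ are permuted.
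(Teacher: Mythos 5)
Your argument is sound, and it supplies a proof that the paper itself omits: the lemma is asserted inside the proof of Theorem \ref{theor:IM_spray} with no justification, so there is no ``paper proof'' to compare against beyond the implicit claim that a routine local computation works. Your skeleton is the right one: conditions (1) and (2) force the local form $Z=\dot x^bP^a{}_b(x)\partial_{x^a}+\dot x^b\dot x^cQ^a{}_{bc}(x)\partial_{\dot x^a}$, condition (3) then determines $P^a{}_b=\delta^a_b$ and the graded-symmetric part of $Q$, and your observation that uniqueness genuinely requires testing (\ref{eq:gspray, alg}) on vector fields with non-constant components (otherwise the $P$-part is invisible in the double bracket) is exactly the subtle point. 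Two remarks on the step you defer. First, the sign verification for the candidate spray can be made nearly painless by noting that the defect $(X,Y)\mapsto[[Z,X^\uparrow],Y^\uparrow]+(\nabla_XY+\nabla_YX)^\uparrow$ is graded $C^\infty(\mathcal M)$-bilinear --- the non-tensorial terms on the two sides cancel exactly as in the classical computation --- hence it is a tensor and need only be evaluated on coordinate vector fields, whose components are constant. Second, the place where Koszul signs genuinely matter is in reading the statement itself: since $[[Z,X^\uparrow],Y^\uparrow]=(-1)^{|X||Y|}[[Z,Y^\uparrow],X^\uparrow]$ by the graded Jacobi identity together with $[X^\uparrow,Y^\uparrow]=0$, the right-hand side of (\ref{eq:gspray, alg}) must be understood as the graded symmetrization $\nabla_XY+(-1)^{|X||Y|}\nabla_YX$; with that convention your existence and uniqueness computations close up, and the derivation of (\ref{eq:[Q,G]}) from the graded Jacobi identity and $[Q^{\mathrm{tan}},X^\uparrow]=[Q,X]^\uparrow$ goes through as you describe.
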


Now, let $A \to M$ be a vector bundle, and let $\nabla$ be a fiber-wise linear connection in $TA \to A$. Consider the connection $\nabla^{[1]}$ in $TA[1] \to A[1]$ corresponding to $\nabla$, and the associated \emph{geodesic spray} $Z^{\nabla^{[1]}}$. Recall that $TA[1]$ is canonically isomorphic to the $\mathbb N$-manifold of degree $1$ obtained by shifting by one the degree in the fibers of the vector bundle $TA \to TM$, and, in the following, we will understand this isomorphism (see, e.g., \cite[Lemma 2.3.1]{LV2019} for details). As the internal degree of $Z^{\nabla^{[1]}}$ is $0$, it corresponds to a derivation of $TA \to TM$, and to a vector field on $TA$ (more precisely, a linear vector field wrt the vector bundle structure $TA \to TM$). One can show, working for instance in local coordinates, that the latter vector field is exactly $Z^\nabla$. Namely, 
from $\nabla$ being fiber-wise linear it easily follows that $Z^\nabla$ is linear wrt the vector bundle structure $TA \to TM$, and
\[
(Z^\nabla)^{[1]} = Z^{\nabla^{[1]}}.
\]
Finally, let $A \Rightarrow M$ be a Lie algebroid and let $\nabla$, hence $\nabla^{[1]}$, be a symmetric fiber-wise linear connection in $TA \to A$. Recall that $TA$ is equipped with the tangent prolongation Lie algebroid structure $TA \Rightarrow TM$, and the corresponding homological vector field on $TA[1]$ is exactly the tangent lift $d_A^{\mathrm{tan}}$. Collecting all the above remarks we get the following chain of equivalences: the components of $\nabla$ form an IM connection iff $\mathcal L_{d_A} \nabla^{[1]} = 0$ iff $[d_A^{\mathrm{tan}}, Z^{\nabla^{[1]}}] = [d_A^{\mathrm{tan}}, (Z^{\nabla})^{[1]}] = 0$ iff $Z^\nabla$ is an IM vector field wrt the Lie algebroid structure $TA \Rightarrow TM$. This concludes the proof.
\end{proof}

\bigskip

\textbf{Acknowledgements.} LV is member of the GNSAGA of INdAM. We thank I. Struchiner for useful discussions. We also thank R. Fernandes and C. Ortiz for sharing some details of their work in progress on \emph{connections on principal bundles in the category of Lie groupoids}. We look forward to comparing our theory with theirs.

\end{document}